\documentclass[12pt,reqno]{amsart}
\usepackage[T1]{fontenc}
\usepackage{lmodern,amsmath,amssymb,amsthm,mathtools,aliascnt}
\usepackage[marginparwidth=0pt,margin=24truemm]{geometry}
\usepackage{microtype,xcolor,graphicx}
\usepackage{enumitem}
\usepackage{float}
\usepackage{tikz}
\usetikzlibrary{arrows.meta}
\definecolor{ink}{HTML}{1F4E79}
\definecolor{fillblue}{HTML}{E8F2FA}
\definecolor{excluded}{HTML}{858E99}
\definecolor{gridline}{HTML}{D8DFE5}
\definecolor{focus}{HTML}{A34D00}

\newcommand{\latticepanel}[8]{%
 \node at (#1,47.5) {#2};
 \begin{scope}[shift={(#1,25.6)},x=5mm,y=5mm]
  \pgfmathsetmacro{\dirsign}{((#5)*(#4)-(#6)*(#3))>0 ? 1 : -1}
  \begin{scope}
   \clip (-2.75,-2.75) rectangle (2.75,2.75);
   \fill[fillblue] ({10*(#4)},{-10*(#3)}) -- ({-10*(#4)},{10*(#3)})
     -- ({-10*(#4)+10*(#3)},{10*(#3)+10*(#4)}) -- ({10*(#4)+10*(#3)},{-10*(#3)+10*(#4)}) -- cycle;
   \draw[gridline,line width=.2pt] (-2.75,-2.75) grid[step=1] (2.75,2.75);
   \draw[excluded,line width=.9pt,dash pattern=on 2.2pt off 1.8pt] (0,0) -- ({-\dirsign*10*(#4)},{\dirsign*10*(#3)});
   \draw[ink,line width=1.25pt] (0,0) -- ({\dirsign*10*(#4)},{-\dirsign*10*(#3)});
  \end{scope}
  \draw[excluded,line width=.4pt,->] (-2.95,0) -- (3.15,0) node[right] {$n$};
  \draw[excluded,line width=.4pt,->] (0,-2.95) -- (0,3.15) node[above] {$m$};
  \foreach \X in {-2,...,2} \foreach \Y in {-2,...,2} {
   \pgfmathtruncatemacro{\isorigin}{(\X==0 && \Y==0)}
   \pgfmathsetmacro{\lone}{(#3)*\X+(#4)*\Y}
   \pgfmathsetmacro{\ltwo}{(#5)*\X+(#6)*\Y}
   \pgfmathtruncatemacro{\inc}{(\lone>0) || (\lone==0 && \ltwo>0)}
   \ifnum\isorigin=0
    \ifnum\inc=1
     \draw[draw=ink,fill=ink,line width=.4pt] (\X,\Y) circle[radius=1.05pt];
    \else
     \draw[draw=excluded,fill=white,line width=.4pt] (\X,\Y) circle[radius=1.05pt];
    \fi
   \fi
  }
  \draw[draw=excluded,fill=white,line width=.55pt] (0,0) circle[radius=1.3pt];
  \node[below right,font=\scriptsize] at (0,0) {$0$};
  \node[below,font=\scriptsize,text=excluded] at (2,-.12) {$2$};
  \node[right,font=\scriptsize,text=excluded] at (.12,2) {$2$};
  \ifnum#8=1
   \draw[focus,line width=.8pt] (0,1) circle[radius=2.6pt];
   \node[right,text=focus] at (.2,1) {$\tau$};
  \fi
 \end{scope}
 \node[font=\footnotesize] at (#1,7.3) {#7};
}
\newcommand{\latticelegend}{%
 \draw[ink,line width=1.25pt] (17,1.8) -- (25,1.8);
 \node[right,font=\footnotesize] at (26,1.8) {included half-line};
 \draw[excluded,line width=.9pt,dash pattern=on 2.2pt off 1.8pt] (87,1.8) -- (95,1.8);
 \node[right,font=\footnotesize] at (96,1.8) {excluded half-line};
}
\definecolor{mylinkcolor}{RGB}{166,77,121}
\definecolor{mycitecolor}{RGB}{17,85,204}
\usepackage{hyperref}
\usepackage[nameinlink]{cleveref}
\hypersetup{colorlinks=true,linkcolor=mylinkcolor,citecolor=mycitecolor,
 urlcolor=mycitecolor,bookmarksnumbered=true,bookmarksopen=true}
\numberwithin{equation}{section}
\newtheorem{theorem}{Theorem}[section]
\newaliascnt{lemma}{theorem}
\newtheorem{lemma}[lemma]{Lemma}
\aliascntresetthe{lemma}
\newaliascnt{proposition}{theorem}
\newtheorem{proposition}[proposition]{Proposition}
\aliascntresetthe{proposition}
\newaliascnt{corollary}{theorem}
\newtheorem{corollary}[corollary]{Corollary}
\aliascntresetthe{corollary}
\theoremstyle{definition}
\newaliascnt{definition}{theorem}

\aliascntresetthe{definition}
\newaliascnt{example}{theorem}
\newtheorem{example}[example]{Example}
\aliascntresetthe{example}
\newaliascnt{remark}{theorem}
\newtheorem{remark}[remark]{Remark}
\aliascntresetthe{remark}
\crefname{theorem}{Theorem}{Theorems}
\crefname{lemma}{Lemma}{Lemmas}
\crefname{proposition}{Proposition}{Propositions}
\crefname{corollary}{Corollary}{Corollaries}
\crefname{definition}{Definition}{Definitions}
\crefname{example}{Example}{Examples}
\crefname{remark}{Remark}{Remarks}
\crefname{section}{Section}{Sections}
\crefname{subsection}{Section}{Sections}
\crefname{figure}{Figure}{Figures}
\crefname{enumi}{}{}
\setlist[enumerate]{label=(\roman*),ref=(\roman*)}
\allowdisplaybreaks[2]
\newcommand{\ZZ}{\mathbb Z}
\newcommand{\QQ}{\mathbb Q}
\newcommand{\RR}{\mathbb R}
\newcommand{\CC}{\mathbb C}
\newcommand{\HH}{\mathbb H}
\newcommand{\one}{\mathbf 1}
\newcommand{\Ab}{\mathcal A}
\newcommand{\Ef}{\mathfrak E}
\DeclareMathOperator{\Rea}{Re}
\DeclareMathOperator{\Ima}{Im}
\DeclareMathOperator{\Res}{Res}
\DeclareMathOperator{\Log}{Log}
\DeclareMathOperator{\SL}{SL}
\DeclareMathOperator{\Span}{span}

\title{Modularity criteria for multiple Eisenstein series}
\author{Jinbo Yu}
\address{Graduate School of Mathematics, Nagoya University, Nagoya, Japan.}
\email{jinbo.yu.e6@math.nagoya-u.ac.jp}
\date{\today}
\subjclass[2020]{Primary 11F11, Secondary 11F30, 11F66, 11M32}
\keywords{multiple Eisenstein series, modular forms,  Hecke converse theorems}

\begin{document}
\begin{abstract}
We give two criteria for a linear combination of multiple Eisenstein series of fixed weight to be a modular form, one in terms of its Fourier coefficients and one in terms of its $L$-function. The first criterion follows from an exact transformation formula under $\SL_2(\ZZ)$, which describes the modular defect of multiple Eisenstein series explicitly. The second criterion is a Hecke converse theorem for multiple Eisenstein series, which shows that the functional equation of the $L$-function alone already implies modularity.
\end{abstract}
\maketitle

\section{Introduction}\label{sec:intro}

The purpose of this paper is to give two criteria for a linear combination of multiple Eisenstein series of fixed weight to be a modular form. The first criterion is given in terms of the Fourier coefficients, and the second one in terms of the functional equation of the associated $L$-function.

For integers $k_1,\dots,k_{r-1}\ge1$ and $k_r\ge2$, the multiple zeta value is defined by
\[
 \zeta(k_1,\dots,k_r)=\sum_{0<n_1<\cdots<n_r}\frac{1}{n_1^{k_1}\cdots n_r^{k_r}}\,.
\]
Multiple Eisenstein series are holomorphic functions on the upper half-plane $\HH$, defined by an analogous sum over a lattice. For $\tau\in\HH$, the lattice $\ZZ\tau+\ZZ$ is ordered by
\begin{equation}\label{eq:order}
 0\prec m\tau+n\iff m>0,\ \text{or}\ m=0\ \text{and}\ n>0,
\end{equation}
and for $\lambda,\lambda'\in\ZZ\tau+\ZZ$ we write $\lambda\prec\lambda'$ if $0\prec\lambda'-\lambda$. For $r\ge1$ and integers $k_1,\dots,k_r\ge2$, the multiple Eisenstein series of weight $k=k_1+\cdots+k_r$ and depth $r$ is
\begin{equation}\label{eq:mes}
 G_{k_1,\dots,k_r}(\tau)=\sum_{\substack{0\prec\lambda_1\prec\cdots\prec\lambda_r\\ \lambda_1,\dots,\lambda_r\in\ZZ\tau+\ZZ}}\frac{1}{\lambda_1^{k_1}\cdots\lambda_r^{k_r}}\,.
\end{equation}
The index $(k_1,\dots,k_r)$ is called \emph{admissible} if $k_r\ge3$, which ensures that \eqref{eq:mes} converges absolutely. For $k_r=2$, $G_{k_1,\dots,k_r}$ is defined by Eisenstein summation, see \eqref{eq:eisenstein-summation}.

Double Eisenstein series were introduced by Gangl, Kaneko and Zagier \cite{GKZ} in their study of double zeta values and modular forms. They computed the Fourier expansion of double Eisenstein series, whose constant term is a double zeta value. The Fourier expansion in arbitrary depth was then computed in \cite{Ba12} and has the form
\[
 G_{k_1,\dots,k_r}(\tau)=\zeta(k_1,\dots,k_r)+\sum_{n\ge1}a_nq^n,\qquad q=e^{2\pi i\tau},
\]
where every $a_n$ is a finite sum of products of multiple zeta values, powers of $2\pi i$ and multiple divisor sums (\cref{thm:fourier}). Bachmann and Tasaka \cite{BT} related this expansion to the Goncharov coproduct. The $q$-series appearing here were studied as $q$-analogues of multiple zeta values by Bachmann and K\"uhn \cite{BK}. The algebraic structure of multiple Eisenstein series has been studied further in \cite{BB,BvI,BKM,BKa}. For $k\ge1$ put
\begin{align*}
 \Ef_k&=\Span_\CC\{G_{k_1,\dots,k_r}\ :\ r\ge1,\ k_1,\dots,k_r\ge2,\ k_1+\cdots+k_r=k\},\\
 \Ef_k^{\mathrm{adm}}&=\Span_\CC\{G_{k_1,\dots,k_r}\ :\ r\ge1,\ k_1,\dots,k_{r-1}\ge2,\ k_r\ge3,\ k_1+\cdots+k_r=k\}
\end{align*}
and $\Ef_0=\CC$. The space $\Ef_k^{\mathrm{adm}}$ is spanned by the absolutely convergent multiple Eisenstein series of weight $k$. Bachmann and Kanno \cite{BKa} showed that the complex span of multiple Eisenstein series is graded by weight.
They also showed that the $\QQ$-algebra spanned by $1$ and all multiple Eisenstein series is a polynomial algebra in $G_2$ over the subalgebra spanned by $1$ and the absolutely convergent ones.

In depth one, $G_k$ is a modular form of weight $k$ for $\SL_2(\ZZ)$ if $k\ge4$ is even. The series $G_2$ is a quasimodular form in the sense of Kaneko and Zagier \cite{KZ}. For odd $k$ the series $G_k$ is not modular. Kaneko \cite{Ka04,Ka07} studied the imaginary parts of the Fourier coefficients of the normalized double Eisenstein series $(2\pi i)^{-k}G_{k_1,k_2}$ of even weight $k$. These imaginary parts are related to period polynomials. Kaneko and Tasaka \cite{KT} and Kina \cite{Ki} studied the analogous question for double Eisenstein series of level $2$ and $4$. Tasaka \cite{Ta} gave an explicit formula which expresses a Hecke eigenform as a linear combination of double Eisenstein series.

In higher depth a multiple Eisenstein series is in general not modular \cite{Ba12}, but certain linear combinations are. For example, if $k_1,\dots,k_r\ge4$ are even, then the sum of the multiple Eisenstein series $G_{k_1',\dots,k_r'}$ over all permutations $(k_1',\dots,k_r')$ of $(k_1,\dots,k_r)$ is a modular form \cite[Satz~4.2.2]{Ba12}. Bachmann \cite{Ba12} applied Kaneko's method to double and triple Eisenstein series. This gave linear combinations of single, double and triple Eisenstein series of weight $12$ with real Fourier coefficients which are cusp forms \cite[Beispiel~4.9.8]{Ba12}. No linear combination with real Fourier coefficients which is not modular was found there \cite[Bemerkung~4.9.9]{Ba12}. In \cite{Ba20} Bachmann asked which linear combinations of multiple Eisenstein series are modular forms, and whether there is an explicit way to describe the modular defect.

Let $M_k$ be the space of modular forms of weight $k$ for $\SL_2(\ZZ)$. It is given by
\[
 M_k=\Span_\CC\{G_4^aG_6^b\ :\ a,b\ge0,\ 4a+6b=k\}.
\]
For $\mathbb A=\QQ$ or $\RR$, a superscript $\mathbb A$ indicates that all Fourier coefficients lie in $\mathbb A$. Identifying functions with their Fourier expansions, we put
\[
 \Ef_k^{\mathbb A,\mathrm{adm}}=\Ef_k^{\mathrm{adm}}\cap\mathbb A[[q]],\qquad M_k^{\mathbb A}=M_k\cap\mathbb A[[q]].
\]
Our first main result answers the first question on $\Ef_k^{\mathrm{adm}}$ in each fixed weight $k$. \Cref{thm:transformation} below describes the modular defect of real linear combinations explicitly.

\begin{theorem}\label{thm:complex}
Let $k\ge1$. Then
\(
 \Ef_k^{\RR,\mathrm{adm}}=M_k^\RR\text{ and }\Ef_k^{\QQ,\mathrm{adm}}=M_k^\QQ.
\)
In particular, a complex linear combination of admissible multiple Eisenstein series of weight $k$ is a modular form if all its Fourier coefficients are real. For odd $k$ such a linear combination vanishes.
\end{theorem}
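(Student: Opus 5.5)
The plan is to derive \cref{thm:complex} from the exact transformation formula \cref{thm:transformation} announced above. That formula describes the modular defect $G_{k_1,\dots,k_r}|_kS-G_{k_1,\dots,k_r}$, where $S=\left(\begin{smallmatrix}0&-1\\1&0\end{smallmatrix}\right)$, for real linear combinations. The expectation is that for $F=\sum c_{\mathbf k}G_{\mathbf k}\in\Ef_k^{\mathrm{adm}}$ the defect $F|_kS-F$ can be written as a finite sum of terms of shape $(\text{constant})\cdot\tau^{-j}\cdot(\text{series in }\Ef^{\mathrm{adm}}_{k-j}\text{ or products thereof})$. Crucially, the constants in these terms are purely imaginary multiples of the "conjugate" data, with factors of $2\pi i$ and multiple zeta values arising from the reordering of the lattice under $\tau\mapsto-1/\tau$. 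This reordering replaces the half-plane $0\prec\lambda$ by a rotated half-plane.

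The first step is to reduce to a statement about the coefficients. The inclusion $M_k^{\mathbb A}\subset\Ef_k^{\mathbb A,\mathrm{adm}}$ is clear, since $M_k$ is spanned by the products $G_4^aG_6^b$. Such a product lies in $\Ef_k^{\mathrm{adm}}$ by the harmonic (stuffle) product of lattice sums, which keeps admissibility because all entries are at least $4$. Moreover $G_4,G_6$ have rational coefficients up to normalization, so $M_k^\QQ$ is recovered. For the reverse inclusion, let $F\in\Ef_k^{\RR,\mathrm{adm}}$. Since $F$ is $1$-periodic, it suffices to show $F|_kS=F$.

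The second step is to exploit reality. Real Fourier coefficients mean $\overline{F(-\bar\tau)}=F(\tau)$. Combining this with the transformation formula, which expresses $F|_kS-F=P_F(\tau)$ in terms of lower-weight pieces, we apply the conjugation symmetry to both sides. We also use that $-\overline{(-1/\bar\tau)}=-1/\tau$ together with the fact that the defect terms change sign under conjugation, because they carry odd powers of $2\pi i$ relative to $F$. This should give $P_F=-P_F$, hence $P_F=0$, so $F$ is invariant under $S$ and $T$. Holomorphy at the cusp follows from the Fourier expansion (\cref{thm:fourier}), so $F\in M_k$. For odd $k$ we have $M_k=0$ since $-I$ acts by $(-1)^k$, so $F=0$. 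Rationality then follows by intersecting with $\QQ[[q]]$.

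The main obstacle is the sign and parity bookkeeping in the second step. One must check that every term of the defect in \cref{thm:transformation} is genuinely anti-invariant under the conjugation $\tau\mapsto-\bar\tau$ combined with complex conjugation. The two types of term to verify are the boundary terms, coming from points on the excluded half-line, and the products with lower-depth multiple Eisenstein series. If a naive parity argument fails, I would instead proceed by induction on depth. I would show that the leading polynomial term in $\tau^{-1}$ of the defect determines real combinations of lower-weight admissible series that must themselves vanish, and then apply linear independence of distinct powers of $\tau$, using that functions in $\Ef^{\mathrm{adm}}$ are bounded as $\Ima\tau\to\infty$.
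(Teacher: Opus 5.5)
You have the outer frame right: the harmonic-product inclusion $M_k\subset\Ef_k^{\mathrm{adm}}$, boundedness from \cref{thm:fourier}, and the final intersection with $\QQ[[q]]$ all match the paper. The core step, however, has a genuine gap.

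An element of $\Ef_k^{\RR,\mathrm{adm}}$ is a \emph{complex} combination $F=\sum_\nu c_\nu G_{\Bbbk_\nu}$ that happens to have real Fourier coefficients. \Cref{thm:transformation} holds only for \emph{real} combinations. Its proof needs $F^\flat$ to be the same combination of the lattice sums $G^\flat_{\Bbbk_\nu}=G^{P^\flat}_{\Bbbk_\nu}$ (\cref{lem:change}). Since $F\mapsto F^\flat$ is conjugate linear, complex $c_\nu$ give $\sum_\nu\overline{c_\nu}\,G^\flat_{\Bbbk_\nu}$ instead. Your proposal never addresses this, and parity bookkeeping of individual defect terms cannot fix it, because conjugation also conjugates the $c_\nu$.

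The missing idea is a reduction to real coefficients that uses the weight grading:
\begin{itemize}
\item The constant term and the coefficients $c_{\Bbbk'}$ in \cref{thm:fourier} are real, and $g_{\Bbbk'}^\flat=(-1)^{k+k'}g_{\Bbbk'}$. Hence $G_\Bbbk^\flat-G_\Bbbk\in\Ef_0+\cdots+\Ef_{k-1}$ (\cref{lem:flat-lower}).
\item Real coefficients give $F^\flat=(-1)^kF$. So $(-1)^kF-\sum_\nu\overline{c_\nu}\,G_{\Bbbk_\nu}$ lies in $\Ef_k\cap(\Ef_0+\cdots+\Ef_{k-1})$.
\item This intersection is $0$, because the sum $\Ef_0+\Ef_1+\cdots$ is direct (Bachmann--Kanno, Main Theorem~B(i)).
\item Therefore $F=\sum_\nu\Rea(c_\nu)G_{\Bbbk_\nu}$ for even $k$, and $F=i\sum_\nu\Ima(c_\nu)G_{\Bbbk_\nu}$ for odd $k$.
\end{itemize}
Some input separating weights is needed. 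By \cref{rem:F8}, $i(G_3-\zeta(3))$ is a complex combination with real Fourier coefficients that is not modular.

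Your second step also misdescribes \cref{thm:transformation}. It is not a finite sum of terms $\tau^{-j}$ times lower-weight series. It reads $F|_k\gamma-(-1)^kF=\sum_{(c',d')}(F-F^\flat)|_k\gamma_{c',d'}$, summed over infinitely many coprime pairs. With this form no anti-invariance ``$P_F=-P_F$'' is needed, and your sketch does not establish one. For a real combination of even weight with real Fourier coefficients, $F^\flat=F$, every term vanishes, and $F|_kS=F$.

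For odd $k$ your uniform conclusion $F|_kS=F$ does not follow. Real Fourier coefficients give $F^\flat=-F$, so the formula reads $F|_kS+F=2\sum_{c\ge1,\,d\le0}F|_k\gamma_{c,d}$. The paper instead applies the formula to $H=\sum_\nu\Ima(c_\nu)G_{\Bbbk_\nu}=-iF$. Its Fourier coefficients lie in $i\RR$, so $H^\flat=H$ and $H|_kS=-H$, and $S^2=-I$ then forces $H=0$ (\cref{thm:main}~(ii)). This again relies on the reduction above to know that $-iF$ is a real combination. Your fallback induction on depth rests on the same incorrect picture of the defect and does not supply the missing reduction.
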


It shows that the observation in \cite[Bemerkung~4.9.9]{Ba12} holds for $\Ef_k^{\mathrm{adm}}$ for every $k$.

\begin{remark}\label{rem:F8}
The weight in \cref{thm:complex} is fixed. This is necessary. Put $\Ef^{\mathrm{adm}}=\bigoplus_{k\ge0}\Ef_k^{\mathrm{adm}}$ with $\Ef_0^{\mathrm{adm}}=\CC$. By \cite{BKa}, this is an algebra, and so is $\Ef^{\mathrm{adm}}\cap\RR[[q]]$. We have
\[
 \bigoplus_{k\ge0}M_k^\RR=\bigoplus_{k\ge0}\Ef_k^{\RR,\mathrm{adm}}\subsetneq\Ef^{\mathrm{adm}}\cap\RR[[q]].
\]
For example, $i(G_3-\zeta(3))$ has real Fourier coefficients by \cref{ex:depth-one}, but its component $iG_3$ of weight $3$ does not. The same holds for $\Ef^{\mathrm{adm}}\cap\QQ[[q]]$ and $(2\pi i)^{-3}(G_3-\zeta(3))$.

For the whole space $\Ef_k$ we compare with quasimodular forms. Let 
\[
 \widetilde M_k=\Span_\CC\{G_2^aG_4^bG_6^c\ :\ a,b,c\ge0,\ 2a+4b+6c=k\}
\]
be the space of quasimodular forms of weight $k$.
Put
\[
 \Ef_k^{\mathbb A}=\Ef_k\cap\mathbb A[[q]],\qquad \widetilde M_k^{\mathbb A}=\widetilde M_k\cap\mathbb A[[q]].
\]
$\widetilde M_k$ is contained in $\Ef_k$ and therefore $\widetilde M_k^{\mathbb A}\subset\Ef_k^{\mathbb A}$. For example, for even $k\ge4$ the quasimodular form $q\frac{d}{dq}G_{k-2}$ has real Fourier coefficients and is not modular. By \cref{thm:complex} it does not lie in $\Ef_k^{\mathrm{adm}}$. So the assumption that the indices are admissible cannot be omitted in \cref{thm:complex}. 

A natural guess is that $\Ef_k^{\mathbb A}=\widetilde M_k^{\mathbb A}$. This is not true. In fact, besides the quasimodular forms, $\Ef_k$ contains other functions with real Fourier coefficients. For example, the Fourier expansion in \cite{Ba12} shows that
\(
 F_8=3G_{2,2,4}-6G_{2,3,3}-2G_{2,4,2}-4G_{3,2,3}-8G_{3,3,2}
\)
has real Fourier coefficients. More precisely, $(2\pi i)^{-8}F_8$ has rational Fourier coefficients, and
\[
 (2\pi i)^{-8}F_8=-\frac{1}{3110400}-\frac{q}{405}-\frac{13q^2}{2160}+\frac{122q^3}{405}+\frac{10787q^4}{6480}+O(q^5).
\]
But $F_8$ is not quasimodular. Indeed, $\widetilde M_8$ is spanned by $G_2^4$, $G_2^2G_4$, $G_2G_6$ and $G_4^2$. The first five Fourier coefficients above are not those of a linear combination of these four series. One can show that $\Ef_8^\QQ$ is spanned by $\widetilde M_8^\QQ$ and $(2\pi i)^{-8}F_8$. It would be interesting to determine the algebra $\bigoplus_{k\ge0}\Ef_k^\QQ$. By \cite{BKM}, it is closed under $q\frac{d}{dq}$.

Bachmann and Burmester \cite{BB2} introduce a coaction on the space of formal multiple Eisenstein series. With this coaction they obtain a criterion for quasimodularity. This criterion asks that a certain part of the coaction vanishes. This part is closely related to the imaginary parts of the Fourier coefficients. It would be interesting to see whether the criterion in \cref{thm:complex} is related to this coaction.
\end{remark}

\Cref{thm:complex} follows from the next theorem together with the weight grading in \cite{BKa}. The next theorem concerns real linear combinations. It also contains the direction from modularity to real Fourier coefficients.

\begin{theorem}\label{thm:main}
Let $F$ be a real linear combination of admissible multiple Eisenstein series of weight $k$, and let $F(\tau)=\sum_{n\ge0}a_nq^n$ be its Fourier expansion.
\begin{enumerate}
\item If $k$ is even, then $F$ is a modular form of weight $k$ for $\SL_2(\ZZ)$ if and only if $a_n\in\RR$ for all $n\ge0$.
\item If $k$ is odd and $a_n\in i\RR$ for all $n\ge0$, then $F=0$.
\end{enumerate}
\end{theorem}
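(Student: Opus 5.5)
The plan is to compute the modular defect $F|_kS - F$ explicitly, where $S=\left(\begin{smallmatrix}0&-1\\1&0\end{smallmatrix}\right)$. The key structural fact is that $G_{k_1,\dots,k_r}$ is built from the order \eqref{eq:order} on $\ZZ\tau+\ZZ$, which is the order induced by the half-plane determined by the direction $\tau$. For an admissible index, absolute convergence lets us rearrange freely. Acting by $\gamma\in\SL_2(\ZZ)$ simply replaces the half-plane ordering by a rotated one: $(c\tau+d)^{-k}G_{k_1,\dots,k_r}(\gamma\tau)$ is the same lattice sum for the order attached to the image half-plane, i.e.\ the sum over $\lambda\in\ZZ\tau+\ZZ$ ordered by a different linear functional (as in the lattice pictures). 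So I will first prove a transformation formula. It expresses $G_{k_1,\dots,k_r}|_k\gamma-G_{k_1,\dots,k_r}$ as a finite combination of products of two factors: lower-depth multiple Eisenstein series, and ``sector'' sums over lattice points lying between the two half-lines. I expect the sector terms to be expressible via multiple zeta values and polynomials in $\tau$, in the spirit of period polynomials, and this is what \cref{thm:transformation} presumably provides.

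The second ingredient is complex conjugation. The map $\tau\mapsto-\bar\tau$ sends $\ZZ\tau+\ZZ$ to $\ZZ(-\bar\tau)+\ZZ$ via $m\tau+n\mapsto -\overline{(m\tau+n)}$ composed with $n\mapsto -n$. I plan to compare $\overline{F(-\bar\tau)}$ with $F(\tau)$. We have $\overline{F(-\bar\tau)}=\sum \bar a_n q^n$, so real coefficients are equivalent to $F(\tau)=\overline{F(-\bar\tau)}$. On the lattice side, conjugation reverses the order on the $n$-coordinate on the line $m=0$ and reflects the half-plane. This gives $\overline{G_{\mathbf k}(-\bar\tau)}=(-1)^k G_{\mathbf k}^{\ast}(\tau)$, where $G^\ast$ is the multiple Eisenstein series for the reflected ordering, i.e.\ the order coming from $-\bar\tau$. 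Equivalently, $G^\ast$ is $G$ transformed by the element $\left(\begin{smallmatrix}-1&0\\0&1\end{smallmatrix}\right)$ of $\mathrm{GL}_2(\ZZ)$. The general principle is that any ordering by a half-plane with rational boundary direction gives a ``twisted'' multiple Eisenstein series. The difference between the orderings for two boundary lines is governed by the same sector terms as in the transformation formula.

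The key step is to combine these two facts and show that, for real $F$, the defect $F|_kS-F$ equals a fixed nonzero linear operator applied to the imaginary part of $F$'s Fourier coefficients. Concretely, I want $F|_kS-F=\Phi(F-\overline{F(-\bar\tau)})$ for some explicit $\Phi$ that is injective on the relevant span, possibly up to $(-1)^k$. Both orderings, $\tau$ and $-1/\tau$ versus $\tau$ and $-\bar\tau$, differ only in how the boundary line $m=0$ versus $n=0$ is treated. For (i), with $k$ even: if $a_n\in\RR$, then the defect under $S$ vanishes, and $T$-invariance is automatic, so $F\in M_k$ (growth at the cusp is clear). Conversely, a modular form in the real span has vanishing defect, so the conjugation term vanishes and the $a_n$ are real. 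For (ii), with $k$ odd: $a_n\in i\RR$ gives $\overline{F(-\bar\tau)}=-F$, and the sign $(-1)^k=-1$ turns this into $F|_kS=F$. Then $F$ is a modular form of odd weight for $\SL_2(\ZZ)$, hence $F|_k(-I)=-F$ forces $F=0$.

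The main obstacle is making the identification between the $S$-defect and the conjugation defect precise. The boundary contributions, i.e.\ the lattice points on the lines $m=0$ and $n=0$, produce genuine multiple zeta values and single-line sums. I must check that these contributions cancel exactly in both comparisons, so that injectivity can be read off. I would first try to verify this in depth one, where $G_k|S-G_k$ is explicit, and then argue by induction on depth using the product/splitting structure of the sector sums.
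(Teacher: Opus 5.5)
Your reduction of (i) and (ii) to a formula for $F|_kS$ has the right shape, and your conjugation step matches \cref{lem:change}~(i). However, the proposal never produces that formula, and the mechanism you sketch for it is wrong.

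The orders attached to $F$ and $F|_kS$ do not differ only on boundary lines. By \cref{lem:change}, $G_\Bbbk|_kS=G_\Bbbk^{C_0}$ with $C_0=\{n<0\}\cup\{n=0,\ m>0\}$, while $G_\Bbbk=G_\Bbbk^{P}$. The sets $C_0$ and $P$ differ on the whole open quadrants $\{m>0,\,n>0\}$ and $\{m<0,\,n<0\}$. Only $P$ and $P^\flat$, i.e.\ $F$ versus $F^\flat$, differ on a single line.

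The missing idea is to interpolate by rotating the boundary line:
\begin{enumerate}
\item For $x\in\QQ$ the sets $C_x$ run from $C_0$ to $-P$ as $x\to+\infty$.
\item The lattice sum changes only when the line $n+xm=0$ passes through lattice points, i.e.\ at $x=-d/c$.
\item At such $x$, $C_x$ and $C_x^+$ differ only on the line $cn=dm$. Transported by $\gamma_{c,d}$, this is exactly the flip $P\to P^\flat$, so the jump is $(F-F^\flat)|_k\gamma_{c,d}$.
\end{enumerate}
So the line contributions do not cancel, as you hope; they \emph{are} the defect. The defect is an infinite series over all coprime $(c,d)$ with $c\ge1$, $d\le0$ (see \cref{ex:depth-one}). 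It is not a finite combination of lower-depth series and polynomial sector terms.

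In this form the ``injectivity of $\Phi$'' needs no separate argument. If $F\in M_k$, then $F^\flat\in M_k$, so every term equals $F-F^\flat$. An absolutely convergent series of infinitely many equal terms then forces $F=F^\flat$.

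The second missing ingredient is analytic. Absolute convergence of each single lattice sum does not let you rearrange freely here. You must show that $x\mapsto G_\Bbbk^{C_x}(\tau)$ is the sum of its jumps, and that the jumps are absolutely summable over all $u\in\QQ$. For entries $k_i=2$ with $i<r$ this fails term by term. The points carrying exponent $2$ for varying $x$ are not confined to a strip, and $\sum|\lambda|^{-2}$ diverges over them.

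The paper overcomes this with \cref{lem:blocks}. Newton's identities reduce the consecutive $2$'s to power sums. Each power sum is written as a difference of two translates of a sum over $C_x$, with terms $O(|\eta|^{-3})$. The Banach algebra $\Ab$ of saltus functions then controls products and limits. Verifying depth one and inducting on depth via a splitting of sector sums does not address this difficulty.

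A minor point: with the correct formula, (ii) gives $F|_kS=-F$, not $F|_kS=F$. Since $S^2=-I$ and $k$ is odd, either sign forces $F=0$, so this slip is harmless.
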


\Cref{thm:main} and \cite[Main Theorem~B(ii)]{BKa} give the following arithmetic statement.

\begin{corollary}\label{cor:rational}
Let $G_1,\dots,G_N$ be all admissible multiple Eisenstein series of weight $k$.
\begin{enumerate}
\item The vectors $(c_1,\dots,c_N)\in\CC^N$ with $\sum_\nu c_\nu G_\nu\in M_k$ form a vector space with a basis of rational vectors.
\item Let $(c_1,\dots,c_N)\in\QQ^N$ and $F=\sum_\nu c_\nu G_\nu$. Then $F$ is a modular form if and only if $(2\pi i)^{-k}F$ has real Fourier coefficients. In this case all Fourier coefficients of $(2\pi i)^{-k}F$ are rational.
\end{enumerate}
\end{corollary}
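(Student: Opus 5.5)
The plan is to deduce both parts from \cref{thm:main} together with the rationality result \cite[Main Theorem~B(ii)]{BKa}. I will use that result in the following form: if $(c_1,\dots,c_N)\in\QQ^N$, then $(2\pi i)^{-k}\sum_\nu c_\nu G_\nu$ has Fourier coefficients in $\QQ\otimes$(multiple zeta values)$\,\cdot(2\pi i)^{-j}$. More usefully, the map $c\mapsto$ (Fourier coefficients of $(2\pi i)^{-k}\sum c_\nu G_\nu$) is defined over a $\QQ$-structure compatible with the $\QQ$-subspace $M_k^\QQ$. Concretely, I expect the cited result to say that the $\QQ$-span of the $G_\nu$ meets $(2\pi i)^{k}\QQ[[q]]$ exactly in elements whose coefficient vectors form a $\QQ$-rational subspace, and that real-coefficient members of $(2\pi i)^{-k}\Ef_k^{\QQ\text{-span}}$ have rational coefficients.

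\emph{Part (ii).} Let $c\in\QQ^N$ and $F=\sum c_\nu G_\nu$; then $F$ is a real combination. Write $(2\pi i)^{-k}=\pm i^{k}(2\pi)^{-k}$ up to a real factor. For $k$ even, $(2\pi i)^{-k}F$ has real coefficients if and only if $F$ does, so \cref{thm:main}(i) gives modularity. For $k$ odd, real coefficients of $(2\pi i)^{-k}F$ means $a_n\in i\RR$, so $F=0$ by \cref{thm:main}(ii), and $F=0$ is a modular form. Conversely, if $F\in M_k$, then for odd $k$ we have $F=0$. For even $k$, $F\in M_k$ is a real combination, hence real-coefficient by \cref{thm:main}(i). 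The rationality statement then follows from \cite[Main Theorem~B(ii)]{BKa}: for rational $c$, the coefficients of $(2\pi i)^{-k}F$ lie in the $\QQ$-span of MZVs divided by powers of $2\pi i$. Being also in $M_k^\RR$, which has a basis of forms with rational $q$-expansions normalized by $(2\pi i)^{-k}$ (the $G_4^aG_6^b$, whose constant terms are rational multiples of $\pi^k$), one needs that the coefficients are rational. I would argue this directly: $(2\pi i)^{-k}F=\sum x_j(2\pi i)^{-k}G_4^{a_j}G_6^{b_j}$ with $x_j$ real. The functions $(2\pi i)^{-k}G_4^aG_6^b$ have rational expansions, so the $x_j$ are determined by finitely many coefficients via a rational invertible system. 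I then invoke \cite{BKa} to conclude that those coefficients are rational, which forces $x_j\in\QQ$.

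\emph{Part (i).} The set $V=\{c\in\CC^N:\sum c_\nu G_\nu\in M_k\}$ is the kernel of the composite $\CC^N\to\Ef_k\to\Ef_k/M_k$, which is linear. It is therefore enough to show $V=(V\cap\RR^N)\otimes\CC$ and $V\cap\RR^N=(V\cap\QQ^N)\otimes\RR$.

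For the first equality, complex conjugation must preserve $V$. Given $c\in V$, consider $\bar F:=\sum\bar c_\nu G_\nu$. Since the $G_\nu$ have $\RR$-real structure up to $i^k$ (by the Fourier expansion, $\overline{G_\nu(\tau)}=(-1)^k G_\nu(-\bar\tau)$ up to the series' symmetry), $\bar F(\tau)=\overline{F(-\bar\tau)}$ up to sign, which is again modular. Hence $\operatorname{Re}c,\operatorname{Im}c\in V$.

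For the second equality, $V\cap\RR^N$ is described by part (ii)'s criterion: it is the real kernel of the map $c\mapsto\operatorname{Im}$ of the coefficients of $(2\pi i)^{-k}\sum c_\nu G_\nu$, using \cref{thm:main}. The main obstacle is showing that this kernel is defined over $\QQ$. I would obtain this from \cite[Main Theorem~B(ii)]{BKa}, which I expect to provide a $\QQ$-linear expression of all coefficients in terms of a family of real numbers linearly independent over $\QQ$, or a suitable $\QQ$-form. Failing that, I would use the fact that the $\QQ$-linear conditions given by the vanishing of the ``non-rational'' components cut out a $\QQ$-rational subspace.
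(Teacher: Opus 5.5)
Your proof of the equivalence in (ii) via \cref{thm:main} is fine and matches the paper. However, none of the rationality assertions is actually proved. In (ii), the step ``invoke \cite{BKa} to conclude that those coefficients are rational'' is the claim itself. The version of \cite[Main Theorem~B(ii)]{BKa} you hypothesize (real-coefficient members of $(2\pi i)^{-k}$ times the $\QQ$-span have rational coefficients) is essentially the statement being proved.

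The missing idea is the correct structural input: rational linear combinations of multiple Eisenstein series that are linearly independent over $\QQ$ remain linearly independent over $\CC$. Equivalently, all $\CC$-linear relations among the $G_\nu$ come from $\QQ$-linear ones. The harmonic product makes each $G_4^aG_6^b$ an integer combination of the $G_\nu$, and with this everything reduces to linear algebra. Take a $\QQ$-basis $w_1,\dots,w_s$ of the $\QQ$-span of the $G_4^aG_6^b$, and extend it by $v_1,\dots,v_t$ to a $\QQ$-basis of the $\QQ$-span of $G_1,\dots,G_N$. These functions are then $\CC$-independent, and $M_k=\Span_\CC\{w_1,\dots,w_s\}$. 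Write $G_\nu=\sum_i\alpha_{\nu i}w_i+\sum_j\beta_{\nu j}v_j$ with rational coefficients. Then $\sum_\nu c_\nu G_\nu\in M_k$ if and only if $\sum_\nu c_\nu\beta_{\nu j}=0$ for all $j$, a system with rational coefficients. This is (i), and it needs neither \cref{thm:main} nor complex conjugation. For the last claim of (ii), a modular $F$ with $c\in\QQ^N$ has rational $(w,v)$-coordinates. Comparing with $F=\sum_id_iw_i$ forces the $v$-coordinates to vanish, so $F$ is a rational combination of the $G_4^aG_6^b$, whose normalized expansions are rational.

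Your route to (i) through $V\cap\RR^N$ does not close either. The conjugation step rests on $\overline{G_\nu(\tau)}=(-1)^kG_\nu(-\bar\tau)$, i.e.\ $G_\nu^\flat=G_\nu$, which is false: $G_3^\flat=G_3-2\zeta(3)$ by \cref{ex:depth-one}. Invariance of $V$ under conjugation does hold, but it needs \cref{lem:flat-lower} and the directness of the weight grading \cite[Main Theorem~B(i)]{BKa}, as in the proof of \cref{thm:complex}.

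More seriously, describing $V\cap\RR^N$ by the vanishing of imaginary parts gives the real solution set of infinitely many linear equations. Their coefficients are numbers built from multiple zeta values and powers of $\pi$, and nothing forces such a set to be spanned by rational vectors: the real solutions of $c_1+\sqrt2\,c_2=0$ contain no nonzero rational vector. Splitting the coefficients along a $\QQ$-basis of these numbers only describes $V\cap\QQ^N$, because the unknowns $c_\nu$ are real. Whether $V\cap\RR^N$ is spanned by $V\cap\QQ^N$ is exactly what the $\CC$-independence statement above settles.
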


By (ii), every rational vector in (i) gives a modular form with Fourier coefficients in $(2\pi i)^k\QQ$. So the only irrational factor is $\pi^k$.

\Cref{thm:main} follows from an exact transformation formula. For an integer $k$ and a function $F\colon\HH\to\CC$ put
\begin{equation}\label{eq:flat}
 F^\flat(\tau)=(-1)^k\,\overline{F(-\bar\tau)}.
\end{equation}
If $F(\tau)=\sum_{n\ge0}a_nq^n$, then $F^\flat(\tau)=\sum_{n\ge0}(-1)^k\overline{a_n}\,q^n$. Thus for even $k$ the function $F-F^\flat$ is formed by the imaginary parts of the Fourier coefficients of $F$. For a real linear combination $F$ of admissible multiple Eisenstein series of weight $k$, the function $F^\flat$ is obtained from \eqref{eq:mes} by reversing the order on each horizontal line $m\tau+\ZZ$ of the lattice (\cref{lem:change}). For $\gamma\in\SL_2(\ZZ)$ with bottom row $(c,d)$, the slash action is $(F|_k\gamma)(\tau)=(c\tau+d)^{-k}F(\gamma\tau)$. For coprime integers $c\ge1$ and $d$, let $\gamma_{c,d}\in\SL_2(\ZZ)$ denote any matrix with bottom row $(c,d)$.

\begin{theorem}\label{thm:transformation}
Let $F$ be a real linear combination of admissible multiple Eisenstein series of weight $k$, and let $\gamma\in\SL_2(\ZZ)$ have bottom row $(c,d)$ with $c\ge1$. Then
\[
 F|_k\gamma-(-1)^kF=\sum_{(c',d')}(F-F^\flat)|_k\gamma_{c',d'},
\]
where $(c',d')$ runs over all pairs of coprime integers with $c'\ge1$ and $d'/c'\le d/c$. The series converges absolutely at every point of $\HH$, and its terms do not depend on the choice of $\gamma_{c',d'}$.
\end{theorem}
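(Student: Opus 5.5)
The plan is to view each slash-translate of a multiple Eisenstein series as the same lattice sum taken with respect to a \emph{different total order} on the lattice. Then I would track how that sum changes as the order is rotated continuously.

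\emph{Step 1: orders indexed by a half-plane.} For a direction $\theta$ let $\prec_\theta$ be the order on $\ZZ\tau+\ZZ$ in which $0\prec_\theta\lambda$ iff $\lambda$ lies in the open half-plane to one side of the line $\RR e^{i\theta}$, or lies on the included half-line of that line. Put $G^{\theta}_{k_1,\dots,k_r}$ for \eqref{eq:mes} with $\prec$ replaced by $\prec_\theta$, and extend linearly to $F^\theta$. By absolute convergence for admissible indices this is well defined. The standard order \eqref{eq:order} is one such $\theta$.

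Write $\gamma\tau=(a\tau+b)/(c\tau+d)$. The map $\lambda=m\gamma\tau+n\mapsto(c\tau+d)\lambda$ is a bijection $\ZZ\gamma\tau+\ZZ\to\ZZ\tau+\ZZ$, and it carries the standard order to another order of the same type. Hence $F|_k\gamma=F^{\theta_\gamma}$, where the new boundary line is spanned by the lattice vector $c\tau+d$, the image of $1$. Likewise, $\lambda\mapsto-\lambda$ reverses both the half-plane and the half-line, and multiplies each term by $(-1)^k$. So $(-1)^kF=F^{\theta_0+\pi}$. In this way both sides of the theorem are values of $\theta\mapsto F^\theta$.

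\emph{Step 2: local jumps.} As $\theta$ rotates, $F^\theta$ is constant on directions not meeting lattice points. It can change only when the boundary line passes a primitive lattice vector $c'\tau+d'$. At such a direction the two adjacent orders differ only in how each line parallel to $c'\tau+d'$ is ordered internally: on one side of the jump it is ordered one way, on the other side the reverse way. By \cref{lem:change}, reversing the order on every horizontal line turns $F$ into $F^\flat$. Transporting this by $\gamma_{c',d'}$, which maps horizontal lines to lines parallel to $c'\tau+d'$, shows that the jump equals $(F-F^\flat)|_k\gamma_{c',d'}$. The sign should be fixed by comparing the two one-sided orders directly.

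The term $(F-F^\flat)|_k\gamma_{c',d'}$ is independent of the choice of $\gamma_{c',d'}$. Indeed, two choices differ by left multiplication by a translation $T^j$, and $F$ and $F^\flat$ are $1$-periodic because of their $q$-expansions.

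\emph{Step 3: summing jumps.} Rotating from the line through $c\tau+d$ to the reversed real axis sweeps exactly the primitive directions $c'\tau+d'$ with $c'\ge1$ and $d'/c'\le d/c$. The endpoint convention at $d'/c'=d/c$ is to be checked with the half-line rule. There are infinitely many such directions, accumulating at the real direction. So I would not telescope naively. Instead:
\begin{enumerate}
\item For a finite set of directions, telescope exactly.
\item Show the remainder $F^{\theta}-F^{\theta_0+\pi}$ tends to $0$ as $\theta\to\theta_0+\pi$.
\end{enumerate}
The remainder converges because it is a difference of lattice sums whose orders agree on every bounded region once $\theta$ is close enough to $\theta_0+\pi$. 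Dominated convergence then applies, using the absolutely convergent majorant available for admissible indices.

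Absolute convergence of the jump series comes from the estimate
\[
|(F-F^\flat)|_k\gamma_{c',d'}(\tau)|\le C|c'\tau+d'|^{-k}\min\bigl(1,e^{-2\pi\Ima(\gamma_{c',d'}\tau)}\bigr).
\]
This holds because $F-F^\flat$ has no constant term, the constant term of $F$ being a real combination of multiple zeta values. Since $k\ge3$, the series $\sum|c'\tau+d'|^{-k}$ converges.

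The main obstacle is Step 2: making the identification of the jump precise. One has to check that reordering each parallel line, with every line treated compatibly, produces exactly $F^\flat$ transported by $\gamma_{c',d'}$, with the correct sign. Also, the nested conditions $\lambda_1\prec\cdots\prec\lambda_r$ must reorganise correctly when several $\lambda_j$ lie on the same parallel line.
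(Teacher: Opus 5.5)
Your geometric skeleton is the paper's: $F|_k\gamma$ and $(-1)^kF$ are lattice sums for rotated orders. Since $C_u$ and $C_u^{+}$ differ only by reversing each line parallel to $c'\tau+d'$, the jump at $u=-d'/c'$ is $(F-F^\flat)|_k\gamma_{c',d'}$ by \cref{lem:change}. That part is easy; the real gap is in Step~3. The jump points are all rational $x$, which form a dense set. So there is no finite telescoping, and ``$F^\theta$ is constant away from lattice directions'' holds on no interval. What must be shown is that $h(x)=F^{C_x}(\tau)$ is a pure jump function, $h(x)=h(+\infty)+\sum_{u\ge x}\big(h(u)-h(u^{+})\big)$ with $\sum_u|h(u)-h(u^{+})|<\infty$, and that $h(u^{+})=F^{C_u^{+}}(\tau)$. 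The value at $+\infty$ and the individual jumps do not exclude a continuous part. The paper proves this by expanding $h$ into step functions with finitely many jumps, absolutely convergent in the norm $|h(+\infty)|+\sum_u|h(u)-h(u^{+})|$ (\cref{lem:step}). With all entries $\ge3$ the order-independent majorant $\prod_i|\lambda_i|^{-k_i}$ makes this routine. But admissible indices may contain entries equal to $2$ before the last one, and then no majorant uniform in the order exists. Take $\Bbbk=(2,3)$ and fix $\lambda_2=m_2\tau+n_2$ with $m_2<0$. The points $\lambda_1=m_1\tau+n_1$ that occur in a chain for some $C_x$ with $x\ge X_0$ include, for every $m_1>0$, all $n_1\le-c\,m_1$, with $c$ depending on $X_0$ and $\lambda_2$. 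The sum of $|\lambda_1|^{-2}$ over them diverges like $\sum_{m_1}1/m_1$. So your dominated-convergence step fails exactly here, and so does the identification of the one-sided limits that Step~2 relies on. The missing idea is \cref{lem:blocks}. There the consecutive $2$'s are grouped into elementary symmetric sums $e_N^C(\alpha,\beta)$, which are converted to power sums by Newton's identities. Then $p_1^C(\alpha,\beta)=\sum_{\eta\in C}\big((\alpha+\eta)^{-2}-(\beta+\eta)^{-2}\big)-\beta^{-2}$, whose terms are $O(|\eta|^{-3})$. This yields $\|e_N^{C_x}(\alpha,\beta)\|\le4\big(B\log(3+|\alpha|+|\beta|)\big)^N$, which is summable against the remaining exponents $\ge3$.

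Your absolute-convergence estimate is also wrong. The constant term of $F-F^\flat$ is $(1-(-1)^k)a_0$, which is nonzero for odd $k$ (\cref{ex:depth-one}: $G_k-G_k^\flat=2\zeta(k)$). More seriously, $F-F^\flat$ is unbounded on $\HH$, while $\Ima\gamma_{c',d'}\tau=\Ima\tau/|c'\tau+d'|^2\to0$. So the $q$-expansion gives no uniform bound at these points. For instance, partial fractions on each horizontal line give $G_{2,3}=\zeta(2,3)+2\zeta(2)g_3+3\zeta(3)g_2+g_{2,3}$, hence $G_{2,3}-G_{2,3}^\flat=2\zeta(2,3)+6\zeta(3)g_2(q)$. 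The transformation law of $G_2=\zeta(2)+g_2$ then gives $(G_{2,3}-G_{2,3}^\flat)|_5\gamma_{1,d'}(\tau)=6\zeta(3)G_2(\tau)(\tau+d')^{-3}+O(|d'|^{-4})$. This is not $O(|d'|^{-5})$ when $G_2(\tau)\ne0$. In the paper, absolute convergence is not obtained term by term. It is the summability of the jumps, which comes out of the same $\ell^1$ bounds as above.
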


For $\gamma=S=\left(\begin{smallmatrix}0&-1\\1&0\end{smallmatrix}\right)$ this reads
\begin{equation}\label{eq:S-formula}
 F|_kS-(-1)^kF=\sum_{\substack{c\ge1,\ d\le0\\ \gcd(c,d)=1}}(F-F^\flat)|_k\gamma_{c,d}.
\end{equation}
If $k$ is even and all Fourier coefficients of $F$ are real, then $F^\flat=F$, and \eqref{eq:S-formula} gives $F|_kS=F$. This is the main step in the proof of \cref{thm:main}.

The proof of \cref{thm:transformation} starts from an observation of \cite{Ba12,BKa}: $F|_k\gamma$ is again a lattice sum of the form \eqref{eq:mes} with respect to another order of the lattice. For $\gamma=S$ the positive lattice points of the new order are the points $m\tau+n$ with $n<0$ and the points $m\tau$ with $m>0$. This order is connected with the opposite of \eqref{eq:order} by a family of orders whose half-planes rotate with a rational parameter $x$ (\cref{fig:rotation}). The lattice sum changes only when the boundary line of the half-plane passes through lattice points, and these changes are the terms of the series in \cref{thm:transformation}. The main point is that the changes are absolutely summable. For indices with an entry equal to $2$, this uses a cancellation between translated lattice sums (\cref{lem:blocks}).

\begin{example}\label{ex:depth-one}
Let $k\ge3$ be odd. The Fourier expansion
\[
 G_k(\tau)=\zeta(k)+\frac{(-2\pi i)^k}{(k-1)!}\sum_{n\ge1}\sigma_{k-1}(n)q^n,
\]
where $\sigma_{k}(n)$ is the divisor function, has a real constant term and purely imaginary coefficients otherwise. Hence $G_k-G_k^\flat=2\zeta(k)$, and \eqref{eq:S-formula} becomes
\[
 G_k|_kS+G_k=2\zeta(k)\sum_{\substack{c\ge1,\ d\le0\\ \gcd(c,d)=1}}\frac{1}{(c\tau+d)^k}\,.
\]
\end{example}

The second criterion concerns $L$-functions. For a function $F(\tau)=\sum_{n\ge0}a_nq^n$ put
\[
 L(F,s)=\sum_{n\ge1}\frac{a_n}{n^s},\qquad \Lambda_F(s)=(2\pi)^{-s}\Gamma(s)L(F,s).
\]
For $F\in M_k$, Hecke \cite{He} showed that $\Lambda_F(s)+a_0/s+i^ka_0/(k-s)$ extends to an entire function which is bounded in vertical strips. Moreover $\Lambda_F(s)=i^k\Lambda_F(k-s)$. Conversely, if the coefficients $a_n$ grow at most polynomially and $\Lambda_F$ has these properties, then $F\in M_k$ \cite{He}. Weil \cite{We} extended this converse theorem to congruence subgroups by using the functional equations of twists. Converse theorems for quasimodular forms were proved by Bhand and Shankhadhar \cite{BS} and by Charan, Meher, Shankhadhar and Singh \cite{CMSS}. We study the $L$-functions of multiple Eisenstein series.

\begin{proposition}\label{prop:analytic}
Let $F\in\Ef_k$.
\begin{enumerate}
\item The series $L(F,s)$ converges absolutely for $\Rea s>k$.
\item The function $\Lambda_F$ has a meromorphic continuation to $\CC$, and all its poles are real.
\item For real numbers $\sigma'<\sigma''$ there is $\kappa>0$ such that
\[
 \Lambda_F(s)\ll(1+|\Ima s|)^\kappa e^{-\pi|\Ima s|/2}\qquad(\sigma'\le\Rea s\le\sigma'',\ |\Ima s|\ge1).
\]
\end{enumerate}
\end{proposition}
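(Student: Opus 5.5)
We work from the Fourier expansion of \cref{thm:fourier}. By that result and the weight grading of \cite{BKa}, every $F\in\Ef_k$ has coefficients $a_n$ that are finite linear combinations, with constant coefficients (products of multiple zeta values and powers of $2\pi i$), of coefficients of $q$-series of the form
\[
 g_{(s_1,\dots,s_l)}(q)=\sum_{0<u_1<\cdots<u_l}\ \sum_{v_1,\dots,v_l\ge1}\frac{v_1^{s_1-1}\cdots v_l^{s_l-1}}{(s_1-1)!\cdots(s_l-1)!}\,q^{u_1v_1+\cdots+u_lv_l},
\]
with $s_1+\cdots+s_l\le k$. By linearity it suffices to prove (i)--(iii) for each such $g$ with $s_1+\dots+s_l=w\le k$.

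For (i), we bound the $n$-th coefficient of $g$. It is a sum over the ways of writing $n=\sum u_jv_j$ with $v_j\le n$. This gives the bound
\[
 \ll n^{w-l}\cdot\#\{(u_j,v_j):\textstyle\sum u_jv_j=n\}.
\]
The number of representations is at most the number of compositions of $n$ into $l$ parts times a divisor bound, so it is $\ll n^{l-1+\varepsilon}$. Hence $a_n\ll n^{w-1+\varepsilon}\le n^{k-1+\varepsilon}$, and $L(F,s)$ converges absolutely for $\Rea s>k$.

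For (ii) and (iii), the cleaner route is a Mellin integral. For $\Rea s$ large we have
\[
 \Lambda_F(s)=\int_0^\infty (F(iy)-a_0)\,y^{s-1}\,dy .
\]
We compute the Mellin transform of $g$ directly as a Dirichlet series identity:
\[
 (2\pi)^{-s}\Gamma(s)\sum_{u,v}\prod_j v_j^{s_j-1}\Big(\sum_j u_jv_j\Big)^{-s}.
\]
The factor $(\sum u_jv_j)^{-s}$ couples the variables. To decouple them, I would iterate Mellin--Barnes in the form
\[
 \Gamma(s)(A+B)^{-s}=\frac1{2\pi i}\int\Gamma(s-z)\Gamma(z)A^{-z}B^{z-s}\,dz,
\]
or equivalently write $e^{-2\pi y\sum u_jv_j}=\prod_j e^{-2\pi yu_jv_j}$ with the ordering constraint on the $u_j$.

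A more robust alternative uses the structure of $F(iy)$ as $y\to0$. Each $g$ has a complete asymptotic expansion
\[
 g(e^{-2\pi y})\sim\sum_{m\ge -w}\ \sum_{j\le l} c_{m,j}\,y^m(\log y)^j .
\]
Such expansions for multiple $q$-series at $q\to1$ are known (Zagier-type asymptotics via Euler--Maclaurin, iterated over the ordered $u$'s). The expansion is exponentially small as $y\to\infty$. Splitting the Mellin integral at $y=1$ then gives the meromorphic continuation to $\CC$. The part over $y\ge1$ is entire, and the part over $y\le1$ contributes terms of the form
\[
 \int_0^1 y^{m+s-1}(\log y)^j\,dy=\pm\frac{j!}{(s+m)^{j+1}}.
\]
So the poles lie at the real points $s=-m$, which proves (ii).

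For (iii), the decay $e^{-\pi|\Ima s|/2}$ comes from rotating the contour: we integrate along the ray $y=te^{i\theta}$ with $|\theta|<\pi/2$. We need $F(\tau)-a_0$ to have controlled growth as $\tau\to0$ inside the sector $|\arg\tau-\pi/2|\le\pi/2-\delta$. Each $g$ is bounded there by $|\tau|^{-w-\varepsilon}$, since $|q|=e^{-2\pi\Ima\tau}$ and we can compare with $g(|q|)$, and it decays exponentially as $\Ima\tau\to\infty$. Taking $\theta\to\pm\pi/2$ with $\delta\asymp1/|\Ima s|$ then yields the bound $(1+|\Ima s|)^\kappa e^{-\pi|\Ima s|/2}$, with the polynomial loss coming from $\delta^{-w}$. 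The subtracted polar terms, which are rational in $s$ with a $\Gamma$-factor, are handled the same way.

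The main obstacle is establishing the asymptotic expansion of $g(e^{-2\pi y})$ as $y\to0$, uniformly in a sector, with only powers of $y$ and $\log y$. To get it, I would first treat depth one by the Mellin transform $\Gamma(s)\zeta(s)\zeta(s-s_1+1)$. I would then proceed by induction on $l$, writing $g$ as an iterated sum in which the innermost sum $\sum_{u_1<u_2}$ is expressed through Bernoulli-polynomial-type functions. Concretely, I would compute the Mellin transform as a multiple Dirichlet series of Shintani/Mordell--Tornheim type. These are known to continue meromorphically with real (in fact rational-integer) singular locus and polynomial growth in vertical strips. Combining this with Stirling's formula for $\Gamma(s)$ gives (ii) and (iii) directly.
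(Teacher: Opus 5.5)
Your part (i) is correct. It differs from the paper only in technique: you count representations $n=\sum_ju_jv_j$ using compositions and the divisor bound, while the paper proves $0\le(-2\pi i)^{-k'}g_{\Bbbk'}(e^{-y})\ll\zeta(\Bbbk')y^{-k'}$ and applies Tonelli to the Mellin integral.

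For (ii) and (iii), however, the proposal does not yet contain a proof. You assert, but do not establish, the asymptotic expansion of $g(e^{-2\pi y})$ as $y\to0^{+}$. Above all you do not establish its uniformity in sectors $|\arg\tau-\pi/2|\le\pi/2-\delta$ with polynomial dependence on $\delta^{-1}$, as you acknowledge yourself. Without it, the rotated contour only bounds $\Lambda_F$ for $\Rea s>k$, where $\int_0^\infty|F(ite^{i\theta})-a_0|\,t^{\sigma-1}\,dt$ converges near $t=0$. A general $F\in\Ef_k$ satisfies no functional equation (think of $G_3$). So nothing carries that bound into a strip $\sigma'\le\Rea s\le\sigma''$ with $\sigma'\le k$.

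Your fallback, quoting known results for series of ``Shintani/Mordell--Tornheim type'', does not apply as stated. The series $\sum_{0<u_1<\cdots<u_l}\sum_{v}\prod_jv_j^{s_j-1}\big(\sum_ju_jv_j\big)^{-s}$ has a single quadratic form in the denominator together with an ordering constraint on the $u_j$. It is not a product of linear forms over a cone, nor a Mordell--Tornheim sum.

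The missing idea is the paper's substitution $u_j=d_1+\cdots+d_j$ with $d_1,\dots,d_l\ge1$. This removes the ordering and gives $\sum_ju_jv_j=\sum_jd_j(v_j+\cdots+v_l)$. Then $L(g,s)$ becomes a sum over all of $\ZZ_{\ge1}^{2l}$ of the monomial $v_1^{s_1-1}\cdots v_l^{s_l-1}$ divided by the $s$-th power of a polynomial with positive coefficients. That polynomial is at least $\max(\sum_jd_j,\sum_jv_j)$ on $[1,\infty)^{2l}$, hence tends to infinity there. This is exactly the hypothesis of Sargos's Theorems~1.2 and~1.4 \cite{Sa}, which give the meromorphic continuation with real poles and polynomial growth in vertical strips. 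Positivity of the coefficients is what makes the exponential factor in Sargos's growth estimate equal to $1$. Stirling's formula for $\Gamma(s)$ then produces the factor $e^{-\pi|\Ima s|/2}$.

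With this reduction and reference in place, your final sentence becomes the paper's proof. The iterated Mellin--Barnes decoupling you mention might also lead to Euler--Zagier-type series with known continuation. However, extracting polynomial growth in arbitrary vertical strips from the resulting multiple contour integrals is substantial work that the proposal does not carry out.
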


\Cref{prop:analytic} follows from the Fourier expansion together with results of Sargos \cite{Sa} on Dirichlet series attached to polynomials. However, $\Lambda_F$ may have poles in $(0,k)$. For example, $\Lambda_{G_k}$ has a pole at $s=1$ if $k\ge3$ is odd. Therefore Hecke's converse theorem does not apply directly. Our second main result shows that for multiple Eisenstein series of even weight at least four the functional equation alone implies modularity.

\begin{theorem}\label{thm:hecke}
Let $k\ge4$ be even and let $F\in\Ef_k$. Then $F\in M_k$ if and only if
\[
 \Lambda_F(s)=i^k\Lambda_F(k-s)
\]
as meromorphic functions on $\CC$.
\end{theorem}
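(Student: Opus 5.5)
The forward direction is Hecke's theorem. If $F\in M_k$, then $\Lambda_F(s)+a_0/s+i^ka_0/(k-s)$ is entire and $\Lambda_F(s)=i^k\Lambda_F(k-s)$. For the converse I will run Hecke's argument, but keep track of the extra poles of $\Lambda_F$, which \cref{prop:analytic} allows in $(0,k)$. Write $\phi(y)=F(iy)-a_0$ for $y>0$. By \cref{prop:analytic}(i) and the exponential decay of $\phi$ as $y\to\infty$, we have $\Lambda_F(s)=\int_0^\infty\phi(y)y^{s-1}\,dy$ for $\Rea s>k$. Mellin inversion then gives $\phi(y)=\frac1{2\pi i}\int_{(\sigma)}\Lambda_F(s)y^{-s}\,ds$ for $\sigma>k$.

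Next I shift the contour from $\Rea s=\sigma$ to $\Rea s=k-\sigma$. This is justified by the bound in \cref{prop:analytic}(iii), which makes the horizontal integrals vanish. By \cref{prop:analytic}(ii) the poles of $\Lambda_F$ are real, and a meromorphic function has only finitely many of them in the compact interval $[k-\sigma,\sigma]$. On the new line I substitute $s\mapsto k-s$ and use the functional equation. The shifted integral becomes $i^ky^{-k}\phi(1/y)$. The outcome is an identity
\[
F(iy)-i^ky^{-k}F(i/y)=\sum_{\rho}\sum_{j} c_{\rho,j}\,y^{-\rho}(\log y)^j ,
\]
a finite sum over the poles $\rho$ of $\Lambda_F$ in $[k-\sigma,\sigma]$, including $0$ and $k$. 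The constants $a_0$ are absorbed here. Put $R=F-F|_kS$, which is holomorphic on $\HH$. Since $k$ is even, $i^ky^{-k}F(i/y)=(F|_kS)(iy)$. So $R(iy)$ equals the right-hand side above. By analytic continuation, $R(\tau)=\sum c_{\rho,j}(\tau/i)^{-\rho}\log^j(\tau/i)$ on all of $\HH$, with principal branches.

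It then remains to show that $R=0$, since then $F|_kS=F$, and together with $F|_kT=F$ and the boundedness at the cusp this gives $F\in M_k$. Because $S^2=-1$, $(ST)^3=\pm1$, $k$ is even and $F|_kT=F$, the function $R$ satisfies $R|_k(1+S)=0$ and $R|_k(1+U+U^2)=0$ with $U=TS$. I will try to show that a finite sum of terms $\tau^{-\rho}\log^j\tau$ satisfying these relations in weight $k$ must vanish, in three steps.
\begin{enumerate}
\item First remove the logarithms. Compare the behaviour of $R(\tau)$ and of $(R|_kU)(\tau)$ as $\tau\to0$ and $\tau\to i\infty$. The three-term relation involves $(\tau+1)^{-\rho}$, which is analytic near $\tau=0$, so the $\log^j\tau$ singularities with $j\ge1$ cannot be matched and their coefficients must vanish.
\item Next show that the exponents are integers. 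This can be read off from the Fourier expansion via Sargos' results, or from the same asymptotic comparison.
\item This leaves a Laurent polynomial $R$ satisfying the period relations in weight $k$ rather than weight $2-k$. Use the growth of $F$ at cusps to rule it out. Since $F-a_0$ decays exponentially as $\Ima\tau\to\infty$, the polynomial growth of $R$ along $\tau=x+iy$ must be matched by $F|_kS$. The latter is controlled by the lattice-sum description of $F|_k\gamma$ from \cite{Ba12,BKa} (for $G_2$ factors, the quasimodular correction). Comparing the two should force all $c_{\rho,j}=0$.
\end{enumerate}

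I expect the last step to be the main obstacle: excluding a nonzero Laurent-polynomial cocycle of weight $k$. Pure algebra does not suffice here, so the growth of $F|_kS$ near rational cusps has to be used. This is where the specific structure of $\Ef_k$ enters and not only the analytic properties in \cref{prop:analytic}. If that route fails, I would instead try to show directly that $\Lambda_F$ has poles only at $0$ and $k$. That would make the residue sum equal to $a_0(1-\tau^{-k})$-type terms, which cancel exactly, so $R=0$ as in Hecke's proof.
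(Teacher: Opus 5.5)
Your reduction is the same as the paper's. Mellin inversion, the contour shift justified by \cref{prop:analytic}~(iii), and the functional equation show that $R=F-F|_kS$ (the paper's $\psi$) is a finite sum of terms $\tau^{-\rho}(\Log\tau)^j$ satisfying the period relations. The gap is the step $R=0$.

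\textbf{The target claim in (i)--(iii) is false.} For the $T$-invariant constant $F=c$, the function $R=c(1-\tau^{-k})$ is a nonzero sum of exactly this shape, and it satisfies $R|_k(1+S)=0$ and $R|_k(1+U+U^2)=0$. You let the $a_0$ terms be ``absorbed'', so your $R$ can contain precisely such terms with $\rho\in\{0,k\}$, and logarithms there if those poles are multiple. Removing them needs neither growth at rational cusps nor a lattice-sum description of $F|_kS$. It needs only the leading behaviour at the cusp $0$: $y^kF(iy)\to(-i)^ka_0$ as $y\to0^+$, which follows from the Fourier expansion by dominated convergence (\cref{lem:cusp}). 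This forces the poles of $\Lambda_F$ at $k$ and $0$ to be at most simple, with residues $i^ka_0$ and $-a_0$. These cancel the $a_0$ terms exactly, so $\psi$ involves only poles $0<\rho<k$.

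\textbf{Step (i) fails as argued.} In $\psi(z)=\psi(z+1)+(z+1)^{-k}\psi\big(z/(z+1)\big)$, the last term reproduces the leading singularities of $\psi(z)$ at $z=0$, logarithms included. This is because $z/(z+1)\sim z$ and $\Log\big(z/(z+1)\big)=\Log z-\Log(1+z)$. The mismatch appears only one order lower. Extend $\psi$ by its explicit formula to $\CC\setminus\{-iy:y\ge0\}$, so that the relation holds for $z=t>0$. Write $t^{-\rho}P_\rho(\log t)$ for the contribution of the pole $\rho$. Up to smaller terms as $t\to0^+$, one gets
\[
 \psi(1+t)=\psi(t)-(1+t)^{-k}\psi\Big(\frac{t}{1+t}\Big)=\sum_{0<\rho<k}t^{1-\rho}\big((k-\rho)P_\rho(\log t)+P_\rho'(\log t)\big).
\]
Let $\rho_*$ be the largest pole in $(0,k)$. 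The symmetry $\rho\leftrightarrow k-\rho$ gives $\rho_*\ge k/2\ge2$. So the right side blows up like $t^{1-\rho_*}|\log t|^{\nu_*-1}$, where $\nu_*$ is the order of $\rho_*$. This contradicts the holomorphy of $\psi$ at $1$.

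\textbf{Steps (ii) and (iii) are unnecessary.} The single computation above excludes every pole in $(0,k)$, with or without logarithms, integral or not. As stated, those steps are also unsupported. Integrality of the poles is not established. Matching the growth of $F|_kS$ at a cusp would require the asymptotic expansion of $F(iy)$ as $y\to0^+$ beyond its leading term. By Mellin inversion, that is equivalent to knowing the poles of $\Lambda_F$ in $(0,k)$, which is the statement to be proved.

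Your fallback, showing that $\Lambda_F$ has poles only at $0$ and $k$, is the right goal, but the proposal gives no mechanism for it. The mechanism is the second-order expansion of the three-term relation as $t\to0^+$, combined with \cref{lem:cusp}.
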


\Cref{thm:complex,thm:main,thm:transformation} and \cref{cor:rational} concern admissible indices. In contrast, \cref{thm:hecke} applies to the whole space $\Ef_k$. It allows indices with $k_r=2$. In the proof, the functional equation and Mellin inversion show that on $\HH$ the function $\psi(z)=F(z)-z^{-k}F(-1/z)$ is a finite linear combination of the functions $z^{-\rho}(\Log z)^j$ with $0<\rho<k$ and $j\ge0$. Here $\Log$ is the principal branch of the logarithm. These terms come from the poles of $\Lambda_F$ in $(0,k)$. Bochner \cite{Bo} showed that a functional equation with singularities in a bounded set is equivalent to a transformation law with an additional residual term. Knopp \cite{Kn81} proved such a Hecke correspondence for modular integrals whose rational period function has poles only at $0$ and $\infty$, see also \cite{Kn78}. Hawkins and Knopp \cite{HK} treated general rational period functions for the theta group. Culp-Ressler \cite{CR} treated general rational period functions for $\SL_2(\ZZ)$. Hassen \cite{Has} studied log-polynomial period functions, which are sums of terms $z^{-\rho}(\Log z)^j$. In our situation the three-term relation
\[
 \psi(z)=\psi(z+1)+(z+1)^{-k}\psi\Big(\frac{z}{z+1}\Big),
\]
which follows from $F(z+1)=F(z)$, forces $\psi=0$ when $k\ge4$. Three-term relations of this type also appear in the theory of period functions of Lewis and Zagier \cite{LZ}, see \cite[Theorem~1]{BC} for Eisenstein series. The condition $k\ge4$ cannot be omitted, since $\Lambda_{G_2}(s)=-\Lambda_{G_2}(2-s)$ but $G_2$ is not a modular form (\cref{rem:G2}).

The contents of this paper are as follows. In \cref{sec:mes} we recall the definition and the Fourier expansion of multiple Eisenstein series. In \cref{sec:orders} we express the slash action on admissible multiple Eisenstein series by a change of the order of summation. \Cref{sec:transformation} contains the proofs of \cref{thm:transformation,thm:main,thm:complex} and \cref{cor:rational}. The main step is \cref{prop:jumps}, which describes how the lattice sums depend on the rotating order. In \cref{sec:hecke} we study the $L$-functions of multiple Eisenstein series and prove \cref{thm:hecke}.

\section*{Acknowledgement}
The author would like to thank Henrik Bachmann for helpful suggestions and comments. The author was financially supported by JST SPRING, Grant Number JPMJSP2125.

\section{Preliminaries}\label{sec:mes}

In this section we recall the definition and the Fourier expansion of multiple Eisenstein series, and some facts on modular forms. Throughout, $\tau\in\HH$ and $q=e^{2\pi i\tau}$.

An \emph{index} is a tuple $\Bbbk=(k_1,\dots,k_r)$ of integers $k_1,\dots,k_r\ge2$ with $r\ge1$. Its weight is $k_1+\cdots+k_r$ and its depth is $r$. For such an index put $G_\Bbbk=G_{k_1,\dots,k_r}$ and $\zeta(\Bbbk)=\zeta(k_1,\dots,k_r)$. For an admissible index, $G_\Bbbk$ is defined by the absolutely convergent series \eqref{eq:mes}, see \cref{lem:convergence}. For an arbitrary index, $G_\Bbbk$ is defined by Eisenstein summation:
\begin{equation}\label{eq:eisenstein-summation}
 G_\Bbbk(\tau)=\lim_{M\to\infty}\lim_{N\to\infty}
 \sum_{\substack{0\prec\lambda_1\prec\cdots\prec\lambda_r\\ \lambda_i=m_i\tau+n_i,\ |m_i|\le M,\ |n_i|\le N}}
 \frac{1}{\lambda_1^{k_1}\cdots\lambda_r^{k_r}}\,.
\end{equation}
For admissible indices this agrees with \eqref{eq:mes}.

For an index $\Bbbk$ of weight $k$ put
\begin{equation}\label{eq:g}
 g_\Bbbk(q)=\frac{(-2\pi i)^k}{(k_1-1)!\cdots(k_r-1)!}
 \sum_{0<u_1<\cdots<u_r}\ \sum_{v_1,\dots,v_r\ge1}
 v_1^{k_1-1}\cdots v_r^{k_r-1}q^{u_1v_1+\cdots+u_rv_r}.
\end{equation}
The $q$-series $(-2\pi i)^{-k}g_\Bbbk(q)$ has nonnegative rational coefficients, which grow at most polynomially. In particular $g_\Bbbk(q)$ converges for $|q|<1$.

\begin{theorem}[{\cite{Ba12}, \cite[Propositions~2.2 and~2.5]{BT}}]\label{thm:fourier}
Let $\Bbbk$ be an index of weight $k$. Then
\[
 G_\Bbbk(\tau)=\zeta(\Bbbk)+\sum_{\Bbbk'}c_{\Bbbk'}\,g_{\Bbbk'}(q)+g_\Bbbk(q).
\]
Here $\Bbbk'$ runs over finitely many indices of weight less than $k$. Each $c_{\Bbbk'}$ is a $\QQ$-linear combination of multiple zeta values of weight $k$ minus the weight of $\Bbbk'$.
\end{theorem}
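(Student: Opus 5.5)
The plan is to decompose the lattice sum \eqref{eq:mes} (or \eqref{eq:eisenstein-summation} when $k_r=2$) according to the horizontal lines $m\tau+\ZZ$ on which the points $\lambda_i=m_i\tau+n_i$ lie. Since the order \eqref{eq:order} compares first $m$ and then $n$, a chain $0\prec\lambda_1\prec\cdots\prec\lambda_r$ is the same thing as three pieces of data:
\begin{enumerate}
\item a splitting of $(1,\dots,r)$ into consecutive blocks;
\item an initial block, possibly empty, with $m_i=0$ and $0<n_1<\cdots<n_j$;
\item further blocks placed on lines $0<u_1<\cdots<u_s$, with strictly increasing $n$'s inside each block.
\end{enumerate}
The initial block contributes exactly $\zeta(k_1,\dots,k_j)$. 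A block on the line $u\tau+\ZZ$ with exponents $(k_{a},\dots,k_b)$ contributes the multitangent function
\[
\Psi_{k_a,\dots,k_b}(u\tau)=\sum_{n_a<\cdots<n_b}\prod_{i=a}^{b}(u\tau+n_i)^{-k_i},
\]
where the sum runs over all integers with no positivity condition. For $k_r=2$ the inner limit $N\to\infty$ in \eqref{eq:eisenstein-summation} is precisely the symmetric summation in $n$ needed to make these line sums converge. The outer limit in $M$ then reduces to a sum over $u_1<\cdots<u_s$ that converges absolutely because of the exponential decay derived below. So I would first write
\[
G_\Bbbk(\tau)=\sum_{j=0}^{r}\zeta(k_1,\dots,k_j)\sum_{\text{splittings of }(k_{j+1},\dots,k_r)}\ \sum_{0<u_1<\cdots<u_s}\ \prod_{t=1}^{s}\Psi_{\text{block}_t}(u_t\tau),
\]
with the convention $\zeta(\emptyset)=1$.

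The second step is to express each multitangent in terms of monotangents. I would use partial fractions in the variables $n_i$: the product $\prod(z+n_i)^{-k_i}$ is decomposed with respect to the differences $n_{i+1}-n_i>0$. Summing over these differences produces multiple zeta values, and what remains is a single free sum $\sum_{n}(z+n)^{-a}=\Psi_a(z)$. This is Bouillot's reduction, which states that $\Psi_{k_a,\dots,k_b}$ is a $\QQ$-linear combination of products $\zeta(\cdots)\Psi_a$ with total weight preserved. The key point is that the coefficient of the $\Psi_1$ terms vanishes when all $k_i\ge2$. I would establish this either by following Bouillot or by a direct argument: the $\Psi_1=\pi\cot(\pi z)$ contributions are the only ones that fail to decay as $\Ima z\to\infty$, while the multitangent itself tends to $0$ there. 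This decay argument is the step I expect to be the main obstacle, since it is also where rationality of the coefficients and the boundary cases with $k_i=2$ have to be checked carefully.

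Finally, the Lipschitz formula gives $\Psi_a(u\tau)=\frac{(-2\pi i)^a}{(a-1)!}\sum_{v\ge1}v^{a-1}q^{uv}$ for $a\ge2$. Substituting this into the block decomposition turns each term $\sum_{u_1<\cdots<u_s}\prod_t\Psi_{a_t}(u_t\tau)$ into exactly $g_{(a_1,\dots,a_s)}(q)$ as in \eqref{eq:g}. The coefficient in front of it is a $\QQ$-combination of MZVs whose weight equals $k-(a_1+\cdots+a_s)$.

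It remains to identify the three kinds of term in the statement.
\begin{enumerate}
\item The term with $s=0$ is $\zeta(\Bbbk)$.
\item The term in which every block is a singleton, so that no reduction is needed and the initial block is empty, gives $g_\Bbbk$ with coefficient $1$.
\item Every other term either has a nonempty initial block or some reduction that removes weight into an MZV factor. Each such term therefore gives some $g_{\Bbbk'}$ with weight of $\Bbbk'$ less than $k$, which proves the stated shape.
\end{enumerate}
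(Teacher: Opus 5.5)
Your proposal is correct and follows the argument of the sources the paper cites for \cref{thm:fourier}, namely \cite{Ba12} and \cite{BT}; the paper itself gives no proof. That argument decomposes chains along the lines $m\tau+\ZZ$ into an MZV times multitangent functions, reduces each multitangent to monotangents by Bouillot's theorem with vanishing $\Psi_1$-coefficient, and then applies the Lipschitz formula. Two small corrections: because every entry is at least $2$, each line sum already converges absolutely, so for $k_r=2$ Eisenstein summation does not supply a symmetric summation in $n$ but only prescribes summing the lines in order of increasing $m$; and the coefficients you obtain are products of MZVs, which become $\QQ$-linear combinations of MZVs of the stated weight only after applying the harmonic (stuffle) product.
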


In particular, $G_\Bbbk$ is holomorphic on $\HH$ and satisfies $G_\Bbbk(\tau+1)=G_\Bbbk(\tau)$. It is bounded on $\{\tau\in\HH:\Ima\tau\ge1\}$, and its constant term is $\zeta(\Bbbk)$.

For an integer $k$, a function $F\colon\HH\to\CC$ and $\gamma=\left(\begin{smallmatrix}a&b\\c&d\end{smallmatrix}\right)\in\SL_2(\ZZ)$ put
\[
 (F|_k\gamma)(\tau)=(c\tau+d)^{-k}F\Big(\frac{a\tau+b}{c\tau+d}\Big).
\]
This is a right action of $\SL_2(\ZZ)$, and $F|_k(-I)=(-1)^kF$. A modular form of weight $k$ for $\SL_2(\ZZ)$ is a holomorphic function $F$ on $\HH$ with $F|_k\gamma=F$ for all $\gamma\in\SL_2(\ZZ)$, which is bounded on $\{\Ima\tau\ge1\}$. Let $M_k$ be the space of these forms. Since $\SL_2(\ZZ)$ is generated by
\[
 S=\begin{pmatrix}0&-1\\1&0\end{pmatrix}\qquad\text{and}\qquad T=\begin{pmatrix}1&1\\0&1\end{pmatrix},
\]
a holomorphic function $F$ on $\HH$ which is bounded on $\{\Ima\tau\ge1\}$ lies in $M_k$ if and only if $F|_kT=F$ and $F|_kS=F$. For odd $k$ we have $M_k=\{0\}$, since $F=F|_k(-I)=-F$ for $F\in M_k$.

For an integer $k$ and $F\colon\HH\to\CC$, the function $F^\flat$ is defined by \eqref{eq:flat}. In this paper $k$ is always the weight of $F$. Since $\overline{e^{2\pi in(-\bar\tau)}}=e^{2\pi in\tau}$, the Fourier expansion $F(\tau)=\sum_{n\ge0}a_nq^n$ gives
\begin{equation}\label{eq:flat-fourier}
 F^\flat(\tau)=\sum_{n\ge0}(-1)^k\,\overline{a_n}\,q^n.
\end{equation}
Therefore, for even $k$ we have $F^\flat=F$ if and only if all $a_n$ are real. For odd $k$ we have $F^\flat=F$ if and only if all $a_n$ are purely imaginary. The map $F\mapsto F^\flat$ is conjugate linear.

\section{Changing the order of summation}\label{sec:orders}

In this section we describe the slash action on admissible multiple Eisenstein series by a change of the order of summation. This idea goes back to \cite{Ba12}.

A subset $C\subset\ZZ^2\setminus\{(0,0)\}$ is called a \emph{positive set} if $\ZZ^2\setminus\{(0,0)\}$ is the disjoint union of $C$ and $-C$, and $C+C\subset C$. For $\tau\in\HH$ we identify $(m,n)\in\ZZ^2$ with the lattice point $m\tau+n$, and we write $m\tau+n\in C$ if $(m,n)\in C$. Then
\[
 \lambda\prec_C\lambda'\iff\lambda'-\lambda\in C\qquad(\lambda,\lambda'\in\ZZ\tau+\ZZ)
\]
defines a total order on $\ZZ\tau+\ZZ$ which is invariant under translations, and $\preceq_C$ denotes the corresponding non-strict order. Put
\begin{align*}
 P&=\{(m,n):\ m>0,\ \text{or}\ m=0\ \text{and}\ n>0\},\\
 P^\flat&=\{(m,n):\ m>0,\ \text{or}\ m=0\ \text{and}\ n<0\},\\
 C_x&=\{(m,n):\ n+xm<0,\ \text{or}\ n+xm=0\ \text{and}\ m>0\},\\
 C_x^+&=\{(m,n):\ n+xm<0,\ \text{or}\ n+xm=0\ \text{and}\ m<0\},
\end{align*}
where $x\in\QQ$. The set $P$ is the positive set of the order \eqref{eq:order}.

\begin{figure}[H]
 \centering
 \begin{tikzpicture}[x=1mm,y=1mm,>=Stealth,every node/.style={font=\small,inner sep=1.1pt}]
  \path[use as bounding box] (0,0) rectangle (156,49.5);
  \latticepanel{31}{$P$}{0}{1}{1}{0}{$m>0$, or $m=0$ and $n>0$}{0}
  \latticepanel{125}{$C_0$}{-1}{0}{0}{1}{$n<0$, or $n=0$ and $m>0$}{0}
  \draw[->,ink,line width=.8pt] (60,29.6) -- (96,29.6);
  \node[above] at (78,30.6) {$S$};
  \node[font=\footnotesize] at (78,23.6) {$m\tau+n\mapsto n\tau-m$};
  \latticelegend
 \end{tikzpicture}
 \caption{The positive sets $P$ and $C_0$. The horizontal axis is $n$ and the vertical axis is $m$, so that a point $(n,m)$ represents $m\tau+n$. Filled points belong to the set and open points do not. The open half-plane is shaded, and the solid half-line of its boundary belongs to the set.}
 \label{fig:S}
\end{figure}

Let $L_1,L_2\colon\RR^2\to\RR$ be linear maps such that $L_1(m,n)=L_2(m,n)=0$ only for $(m,n)=(0,0)$. The set
\begin{equation}\label{eq:lex}
 \{(m,n)\ne(0,0):\ L_1(m,n)>0,\ \text{or}\ L_1(m,n)=0\ \text{and}\ L_2(m,n)>0\}
\end{equation}
is the set of positive elements for the lexicographic order of the pairs $(L_1(m,n),L_2(m,n))$, so it is a positive set. Geometrically, it consists of the lattice points in the open half-plane $L_1>0$ and on one of the two open half-lines of the line $L_1=0$. The five sets above are of the form \eqref{eq:lex}. For such a set $C$ we write $L_1^C$ and $L_2^C$ for its two maps, so that
\begin{gather*}
 L_1^{P}(m,n)=L_1^{P^\flat}(m,n)=m,\qquad
 L_1^{-P}(m,n)=-m,\qquad
 L_1^{C_x}(m,n)=L_1^{C_x^+}(m,n)=-n-xm,\\
 L_2^{P}(m,n)=n,\qquad
 L_2^{P^\flat}(m,n)=L_2^{-P}(m,n)=-n,\qquad
 L_2^{C_x}(m,n)=m,\qquad
 L_2^{C_x^+}(m,n)=-m.
\end{gather*}
\Cref{fig:S} shows $P$ and $C_0$, and \cref{fig:rotation} shows $C_0$, $C_0^+$ and $C_1$. As $x$ increases, the boundary line $n+xm=0$ of $C_x$ rotates, and $C_x$ tends to $-P$ as $x\to+\infty$ (\cref{lem:indicator}).

\begin{figure}[H]
 \centering
 \begin{tikzpicture}[x=1mm,y=1mm,>=Stealth,every node/.style={font=\small,inner sep=1.1pt}]
  \path[use as bounding box] (0,0) rectangle (156,49.5);
  \latticepanel{26}{$C_0$}{-1}{0}{0}{1}{$n<0$, or $n=0$ and $m>0$}{1}
  \latticepanel{78}{$C_0^{+}$}{-1}{0}{0}{-1}{$n<0$, or $n=0$ and $m<0$}{1}
  \latticepanel{130}{$C_1$}{-1}{-1}{0}{1}{$n+m<0$, or $n+m=0$ and $m>0$}{0}
  \latticelegend
 \end{tikzpicture}
 \caption{The positive sets $C_0$, $C_0^+$ and $C_1$, drawn as in \cref{fig:S}. The sets $C_0$ and $C_0^+$ differ only on the line $n=0$. For example, $\tau\in C_x$ for $x\le0$ and $\tau\notin C_x$ for $x>0$, while $\tau\in C_0$ and $\tau\notin C_0^+$.}
 \label{fig:rotation}
\end{figure}

Let $C$ be a positive set of the form \eqref{eq:lex}, and let $\Bbbk=(k_1,\dots,k_r)$ be an admissible index. A \emph{chain} for $C$ is a tuple $(\lambda_1,\dots,\lambda_r)$ of lattice points with $0\prec_C\lambda_1\prec_C\cdots\prec_C\lambda_r$. Put
\begin{equation}\label{eq:GC}
 G_\Bbbk^C(\tau)=\sum_{0\prec_C\lambda_1\prec_C\cdots\prec_C\lambda_r}\frac{1}{\lambda_1^{k_1}\cdots\lambda_r^{k_r}}\,,
\end{equation}
where the sum runs over all chains for $C$. This series converges absolutely by \cref{lem:convergence}, and $G_\Bbbk^P=G_\Bbbk$. Since the sets $C$ are defined by conditions on the integer coordinates, $G_\Bbbk^C$ is a function on $\HH$.

For $\tau\in\HH$ let $\delta=\delta(\tau)>0$ be the minimum of $|\lambda|$ over all $\lambda\in\ZZ\tau+\ZZ$ with $\lambda\ne0$. Let $C$ have the form \eqref{eq:lex}. For $z=m\tau+n\in\CC$ with $m,n\in\RR$ put $L_1^C(z)=L_1^C(m,n)$. Since $(m,n)\mapsto m\tau+n$ is an $\RR$-linear bijection from $\RR^2$ to $\CC$, this defines an $\RR$-linear map $L_1^C\colon\CC\to\RR$. If $\alpha\preceq_C\lambda\preceq_C\beta$, then $\lambda-\alpha$ and $\beta-\lambda$ lie in $C\cup\{0\}$, hence $L_1^C(\alpha)\le L_1^C(\lambda)\le L_1^C(\beta)$. Thus
\begin{equation}\label{eq:interval-strip}
 \{\lambda:\alpha\preceq_C\lambda\preceq_C\beta\}\subset\{z\in\CC:\ L_1^C(\alpha)\le L_1^C(z)\le L_1^C(\beta)\}.
\end{equation}
The right side of \eqref{eq:interval-strip} is the closed strip bounded by the parallel lines $L_1^C(z)=L_1^C(\alpha)$ and $L_1^C(z)=L_1^C(\beta)$, which pass through $\alpha$ and $\beta$. Hence its width is at most $|\beta-\alpha|$.

\begin{lemma}\label{lem:lattice}
Let $\tau\in\HH$. The following statements hold, where $\lambda$ runs through $\ZZ\tau+\ZZ$ and the implied constants depend only on $\tau$, $\kappa$ and $\mu$.
\begin{enumerate}
\item For $\kappa>2$ and $\mu\ge0$, $\displaystyle\sum_{\lambda\ne0}|\lambda|^{-\kappa}\big(\log(3+|\lambda|)\big)^\mu<\infty$.
\item For $R\ge1$, $\displaystyle\sum_{|\lambda|\ge R}|\lambda|^{-3}\ll R^{-1}$.
\item For a closed strip $D\subset\CC$ of width $w\ge0$, $\displaystyle\sum_{\lambda\in D\setminus\{0\}}|\lambda|^{-2}\ll\log(3+w)$.
\end{enumerate}
\end{lemma}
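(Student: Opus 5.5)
The basic tool is a comparison between the lattice and $\RR^2$. Write $\lambda=m\tau+n$ and $\tau=u+iv$ with $v>0$. Then $|\lambda|^2=(mu+n)^2+m^2v^2$ is a positive definite quadratic form in $(m,n)$, so there are constants $0<c_1\le c_2$, depending only on $\tau$, with
\[
 c_1\|(m,n)\|\le|\lambda|\le c_2\|(m,n)\|,
\]
where $\|\cdot\|$ is the Euclidean norm on $\ZZ^2$. Moreover $|\lambda|\ge\delta>0$ for $\lambda\ne0$. The number of lattice points with $|\lambda|\le T$ is $\ll T^2$, since each lattice point $\lambda$ carries a fundamental parallelogram of area $v$ contained in the disc of radius $T+|\tau|+1$. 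The same argument shows that the number of lattice points in an annulus $2^j\le|\lambda|<2^{j+1}$ is $\ll 4^j$. All three parts will follow by summing over dyadic shells.

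For (i), split the sum into the shells $A_j=\{2^j\delta\le|\lambda|<2^{j+1}\delta\}$ with $j\ge0$. The shell $A_j$ contributes
\[
 \ll 4^j\,(2^j\delta)^{-\kappa}\,\bigl(\log(3+2^{j+1}\delta)\bigr)^\mu\ll 2^{(2-\kappa)j}(1+j)^\mu,
\]
and this is summable over $j$ because $\kappa>2$. For (ii), the same shell count, started at $2^j\ge R$, gives
\[
 \sum_{|\lambda|\ge R}|\lambda|^{-3}\ll\sum_{j\ge0}(2^jR)^2(2^jR)^{-3}\ll R^{-1}.
\]

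Part (iii) is the only step that needs some care. The shell count alone gives a divergent bound here, so we have to use that $D$ is thin. Let $D$ be a strip of width $w$. Its intersection with the disc $|z|<2T$ is contained in a rectangle of size $w\times 4T$. The lattice points inside are counted by comparing with the areas of fundamental parallelograms: enlarging the rectangle by the diameter $d_0=1+|\tau|$ of a fundamental parallelogram shows that the count is
\[
 \ll (w+d_0)(T+d_0)/v\ll (w+1)T\qquad (T\ge\delta).
\]
Hence the shell $T\le|\lambda|<2T$ with $T=2^j\delta$ contributes $\ll (w+1)T\cdot T^{-2}=(w+1)/T$. This bound is only useful for large $T$; for small $T$ we use the plain count, which gives a contribution $\ll T^2\cdot T^{-2}=1$. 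Taking the minimum for each shell, the shells with $T\le w+1$ contribute $\ll 1$ each, and there are $\ll\log(3+w)$ of them. The shells with $T>w+1$ contribute a geometric series $\ll\sum_j (w+1)/(2^j(w+1))\ll 1$. Together this gives $\ll\log(3+w)$. The implied constants depend only on $\tau$, since the strip enters only through $w$ and not through its position or direction.
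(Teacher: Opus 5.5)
Your proof is correct and follows essentially the same route as the paper: dyadic shells $2^j\delta\le|\lambda|<2^{j+1}\delta$, an area-comparison count giving $\ll T^2$ points per shell and $\ll(1+w)T$ points per shell inside the strip, and the per-shell bound $\min\bigl(1,(1+w)/T\bigr)$ for (iii). The only differences are cosmetic: you count lattice points with disjoint translates of a fundamental parallelogram, where the paper uses the disjoint discs of radius $\delta/2$, and in (ii) you start the shells at $R$ rather than at $\delta 2^{j_0}$.
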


\begin{proof}
Since $|\lambda-\lambda'|\ge\delta$ for distinct lattice points $\lambda,\lambda'$, by the definition of $\delta=\delta(\tau)$, the open discs of radius $\delta/2$ around the lattice points are pairwise disjoint. Therefore the number of lattice points in a subset of $\CC$ is at most $4/(\pi\delta^2)$ times the area of the $\delta/2$-neighbourhood of the subset. Let $R\ge\delta$. The set $\{R\le|\lambda|<2R\}$ lies in the disc of radius $2R$, and $D\cap\{R\le|\lambda|<2R\}$ lies in a rectangle with side lengths $w$ and $4R$. The $\delta/2$-neighbourhoods of these sets have area at most $\pi(3R)^2$ and $(w+\delta)(4R+\delta)\le5R(w+\delta)$. Hence
\[
 \#\{\lambda:R\le|\lambda|<2R\}\ll R^2,\qquad \#\{\lambda\in D:R\le|\lambda|<2R\}\ll(1+w)R.
\]
Put $R_j=\delta2^j$ for $j\ge0$. The ranges $R_j\le|\lambda|<R_{j+1}$ cover all $\lambda\ne0$, and in the $j$-th range $|\lambda|^{-\kappa}\le R_j^{-\kappa}$ and $\log(3+|\lambda|)\ll1+j$. Hence
\[
 \sum_{\lambda\ne0}|\lambda|^{-\kappa}\big(\log(3+|\lambda|)\big)^\mu
 \ll\sum_{j\ge0}R_j^{2-\kappa}(1+j)^\mu
 \ll\sum_{j\ge0}2^{(2-\kappa)j}(1+j)^\mu<\infty,
\]
which proves (i). In (ii) only the ranges with $R_{j+1}>R$ contribute. Let $j_0$ be the smallest such $j$. Then $R/2<R_{j_0}\le R$, since $R_0=\delta\le1\le R$, and
\[
 \sum_{|\lambda|\ge R}|\lambda|^{-3}\ll\sum_{j\ge j_0}R_j^2R_j^{-3}=\sum_{j\ge j_0}\frac{1}{R_j}=\frac{2}{R_{j_0}}<\frac{4}{R}\,.
\]
In (iii) the $j$-th range contributes $\ll\min\big(R_j^2,(1+w)R_j\big)R_j^{-2}\ll\min\big(1,(1+w)2^{-j}\big)$, and
\begin{align*}
 \sum_{\lambda\in D\setminus\{0\}}|\lambda|^{-2}&\ll\sum_{j\ge0}\min\big(1,(1+w)2^{-j}\big)
 \le\#\{j\ge0:2^j\le1+w\}+(1+w)\sum_{2^j>1+w}2^{-j}\\
 &\le\log_2(1+w)+3\ll\log(3+w).\qedhere
\end{align*}
\end{proof}

\begin{lemma}\label{lem:convergence}
Let $\Bbbk$ be an admissible index and let $C$ be a positive set of the form \eqref{eq:lex}. Then the series \eqref{eq:GC} converges absolutely.
\end{lemma}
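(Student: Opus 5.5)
We prove absolute convergence by induction on the depth $r$, bounding each chain sum by iterated sums over the lattice. Write $\Bbbk=(k_1,\dots,k_r)$ with $k_1,\dots,k_{r-1}\ge2$ and $k_r\ge3$. Since $C$ is a positive set and every $\lambda_j$ in a chain is nonzero, the absolute series is
\[
 \sum_{0\prec_C\lambda_1\prec_C\cdots\prec_C\lambda_r}\prod_{j=1}^r|\lambda_j|^{-k_j}.
\]
Because $|\lambda_j|\ge\delta$ for all $j$, we may replace each $k_j$ with $j<r$ by $2$ and $k_r$ by $3$ at the cost of a constant factor. So it suffices to treat the index $(2,\dots,2,3)$.

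\emph{Key step: control of the inner sums.} Fix $\lambda_r$. The points $\lambda_1,\dots,\lambda_{r-1}$ all satisfy $0\prec_C\lambda_j\prec_C\lambda_r$. By \eqref{eq:interval-strip}, each of them lies in the closed strip $D(\lambda_r)=\{z:0\le L_1^C(z)\le L_1^C(\lambda_r)\}$, whose width is at most $|\lambda_r|$. Dropping the mutual ordering among $\lambda_1,\dots,\lambda_{r-1}$ only enlarges the sum, so the sum over them is at most
\[
 \Big(\sum_{\lambda\in D(\lambda_r)\setminus\{0\}}|\lambda|^{-2}\Big)^{r-1}\ll\big(\log(3+|\lambda_r|)\big)^{r-1},
\]
by \cref{lem:lattice}(iii). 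The implied constant depends only on $\tau$, since the strip bound in that lemma is uniform in the strip.

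\emph{Conclusion.} The total is therefore
\[
 \ll\sum_{\lambda_r\ne0}|\lambda_r|^{-3}\big(\log(3+|\lambda_r|)\big)^{r-1},
\]
which is finite by \cref{lem:lattice}(i) with $\kappa=3>2$ and $\mu=r-1$. No induction is actually needed; the case $r=1$ is just \cref{lem:lattice}(i).

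\emph{Main obstacle.} The crux is ensuring that the constraint $\lambda_j\prec_C\lambda_r$ confines $\lambda_j$ to a strip of width $O(|\lambda_r|)$. Without this, the weight-$2$ sums $\sum|\lambda|^{-2}$ would diverge. The strip comes from \eqref{eq:interval-strip} with $\alpha=0$ and $\beta=\lambda_r$, which only uses that $C$ has the lexicographic form \eqref{eq:lex}. The hypothesis $k_r\ge3$ then absorbs the logarithmic loss.
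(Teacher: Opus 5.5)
Your proposal is correct and is essentially the paper's proof. You fix $\lambda_r$, confine $\lambda_1,\dots,\lambda_{r-1}$ via \eqref{eq:interval-strip} to a strip of width at most $|\lambda_r|$, lower the exponents using $|\lambda|\ge\delta$, drop the ordering and apply \cref{lem:lattice}~(iii), and then sum over $\lambda_r$ with \cref{lem:lattice}~(i) and $k_r\ge3$; the opening mention of induction on $r$ is superfluous, as you note yourself.
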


\begin{proof}
Fix the last point $\lambda_r$. By \eqref{eq:interval-strip}, the points $\lambda_1,\dots,\lambda_{r-1}$ lie in a strip of width at most $|\lambda_r|$. Since $|\lambda|^{-k_i}\le\delta^{2-k_i}|\lambda|^{-2}$ for $\lambda\ne0$, dropping the order between $\lambda_1,\dots,\lambda_{r-1}$ and using \cref{lem:lattice}~(iii) gives
\[
 \sum_{0\prec_C\lambda_1\prec_C\cdots\prec_C\lambda_r}\frac{1}{|\lambda_1|^{k_1}\cdots|\lambda_r|^{k_r}}
 \ll\sum_{\lambda_r\ne0}\frac{(\log(3+|\lambda_r|))^{r-1}}{|\lambda_r|^{k_r}}\,,
\]
which is finite by \cref{lem:lattice}~(i), since $k_r\ge3$.
\end{proof}

\begin{lemma}\label{lem:change}
Let $\Bbbk$ be an admissible index of weight $k$.
\begin{enumerate}
\item $G_\Bbbk^{P^\flat}=G_\Bbbk^\flat$ and $G_\Bbbk^{-P}=(-1)^kG_\Bbbk$.
\item Let $c\ge1$ and $d$ be coprime integers, put $x=-d/c$, and let $\gamma\in\SL_2(\ZZ)$ have bottom row $(c,d)$. Then
\[
 G_\Bbbk^{C_x}=G_\Bbbk|_k\gamma\qquad\text{and}\qquad G_\Bbbk^{C_x^+}=G_\Bbbk^\flat|_k\gamma.
\]
\end{enumerate}
\end{lemma}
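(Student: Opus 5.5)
The whole lemma comes down to reindexing the absolutely convergent series \eqref{eq:GC}; by \cref{lem:convergence} every rearrangement below is allowed. The tool throughout is a lattice-preserving change of variables that maps one positive set onto another.

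For (i), begin with $G_\Bbbk^\flat(\tau)=(-1)^k\overline{G_\Bbbk(-\bar\tau)}$. Conjugating the lattice point $m(-\bar\tau)+n$ gives $-m\tau+n$. Hence
\[
 \overline{G_\Bbbk(-\bar\tau)}=\sum\prod_i(-m_i\tau+n_i)^{-k_i},
\]
where $(m_i,n_i)$ runs over chains for $P$. Put $\lambda_i=m_i\tau-n_i$, so that $-m_i\tau+n_i=-\lambda_i$ and the product picks up the factor $(-1)^k$. This cancels the $(-1)^k$ in front. The map $(m,n)\mapsto(m,-n)$ sends $P$ onto $P^\flat$, so it sends chains for $P$ to chains for $P^\flat$. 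Therefore $G_\Bbbk^\flat=G_\Bbbk^{P^\flat}$. For the second identity use $(m,n)\mapsto(-m,-n)$. It sends chains for $P$ bijectively to chains for $-P$, and each term changes by the factor $(-1)^k$.

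For (ii), write $\gamma=\left(\begin{smallmatrix}a&b\\c&d\end{smallmatrix}\right)$. The first step is the identity
\[
 (c\tau+d)(m\gamma\tau+n)=m(a\tau+b)+n(c\tau+d)=(ma+nc)\tau+(mb+nd).
\]
It gives
\[
 (G_\Bbbk|_k\gamma)(\tau)=\sum\prod_i\big((m_ia+n_ic)\tau+(m_ib+n_id)\big)^{-k_i},
\]
summed over chains $(m_i,n_i)$ for $P$. The map $\phi(m,n)=(ma+nc,\,mb+nd)$ is a linear automorphism of $\ZZ^2$, because $\det\gamma=1$. It preserves differences, so chains for $P$ correspond exactly to chains for $\phi(P)$. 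It remains to show $\phi(P)=C_x$ with $x=-d/c$.

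To identify $\phi(P)$, write $(m',n')=\phi(m,n)$ and invert: $m=dm'-cn'$ and $n=-bm'+an'$. Then
\[
 m=-c\,(n'+xm')=c\,L_1^{C_x}(m',n').
\]
Since $c\ge1$, the condition $m>0$ is equivalent to $L_1^{C_x}>0$. On the line $m=0$ we have $(m',n')=n(c,d)$, so $n>0$ is equivalent to $m'>0$. This matches $L_2^{C_x}=m$, hence $\phi(P)=C_x$.

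For $C_x^+$, apply the same reindexing to $G_\Bbbk^\flat$. By part (i) this is $G_\Bbbk^{P^\flat}$, so the summation runs over chains for $P^\flat$. On $m=0$ the set $P^\flat$ requires $n<0$, which translates into $m'<0$. Hence $\phi(P^\flat)=C_x^+$, which gives $G_\Bbbk^\flat|_k\gamma=G_\Bbbk^{C_x^+}$.

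There is no real obstacle here. The only delicate point is sign bookkeeping: whether $\phi$ or $\phi^{-1}$ is the relevant map, the sign of $c$, and the boundary half-line in each positive set. I would verify the final identification against the case $\gamma=S$ of \cref{fig:S}, where $x=0$ and the result must be $C_0$.
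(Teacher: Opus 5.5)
Your proposal is correct and follows the paper's proof essentially step for step. Part (i) uses the same substitutions $(m,n)\mapsto(m,-n)$ and $(m,n)\mapsto(-m,-n)$. Part (ii) uses the same reindexing $(m,n)\mapsto(am+cn,bm+dn)$, the same identity $m=-c(n'+xm')$, and the same boundary check $(m',n')=n(c,d)$ on $m=0$, which is how the paper shows that $P$ and $P^\flat$ are sent to $C_x$ and $C_x^+$.
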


The first identity in (ii) goes back to \cite{Ba12}, see also \cite[Lemma~3.1 and Remark~3.2]{BKa}.

\begin{proof}
All changes of the summation variables below are allowed by \cref{lem:convergence}.

(i) Since $\overline{m(-\bar\tau)+n}=-(m\tau-n)$ and $k_1+\cdots+k_r=k$, we have
\[
 G_\Bbbk^\flat(\tau)=(-1)^k\,\overline{G_\Bbbk(-\bar\tau)}
 =\sum_{0\prec\lambda_1\prec\cdots\prec\lambda_r}\ \prod_{i=1}^r(m_i\tau-n_i)^{-k_i},
\]
where $\lambda_i=m_i\tau+n_i$. The substitution $(m_i,n_i)\mapsto(m_i,-n_i)$ maps $P$ onto $P^\flat$, hence the chains for $P$ onto the chains for $P^\flat$. This proves $G_\Bbbk^\flat=G_\Bbbk^{P^\flat}$. The substitution $(m_i,n_i)\mapsto(-m_i,-n_i)$ maps the chains for $P$ onto the chains for $-P$ and multiplies every term by $(-1)^k$, which proves the second identity.

(ii) Let $\gamma=\left(\begin{smallmatrix}a&b\\c&d\end{smallmatrix}\right)$. For $m,n\in\ZZ$ we have
\[
 m\,\frac{a\tau+b}{c\tau+d}+n=\frac{(am+cn)\tau+(bm+dn)}{c\tau+d}\,.
\]
Since $k_1+\cdots+k_r=k$, the factors $c\tau+d$ cancel with the factor of the slash action, and
\[
 (G_\Bbbk|_k\gamma)(\tau)=\sum\ \prod_{i=1}^r\big((am_i+cn_i)\tau+(bm_i+dn_i)\big)^{-k_i},
\]
where the sum runs over the chains $0\prec m_1\tau+n_1\prec\cdots\prec m_r\tau+n_r$ for $P$. The map $(m,n)\mapsto(m',n')=(am+cn,bm+dn)$ is an additive bijection of $\ZZ^2$, since $\gamma$ is invertible over $\ZZ$. Since $c\ge1$ and $c(n'+xm')=cn'-dm'=-m$, the condition $m>0$ is equivalent to $n'+xm'<0$. If $m=0$, then $n'+xm'=0$ and $m'=cn$, so that $n>0$ if and only if $m'>0$. Therefore the map sends $P$ onto $C_x$ and $P^\flat$ onto $C_x^+$, and it sends the chains for $P$ onto the chains for $C_x$. This proves $G_\Bbbk|_k\gamma=G_\Bbbk^{C_x}$. The same computation applied to $G_\Bbbk^\flat=G_\Bbbk^{P^\flat}$ gives $G_\Bbbk^\flat|_k\gamma=G_\Bbbk^{C_x^+}$.
\end{proof}

\section{The transformation formula}\label{sec:transformation}

In this section we prove \cref{thm:transformation,thm:main,thm:complex} and \cref{cor:rational}. Fix $\tau\in\HH$ and regard $G_\Bbbk^{C_x}(\tau)$ as a function of $x\in\QQ$. By \cref{lem:change}, its value at $x=-d/c$ is $(G_\Bbbk|_k\gamma_{c,d})(\tau)$, and its limit as $x\to+\infty$ will be $(-1)^kG_\Bbbk(\tau)$. Here and in the following, all limits in $x$ are taken with $x\in\QQ$. For a function $h\colon\QQ\to\CC$ and $x_0\in\QQ$ we write $h(x_0^{+})=\lim_{x\to x_0^{+}}h(x)$ and $h(+\infty)=\lim_{x\to+\infty}h(x)$ whenever these limits exist.

We first introduce the class of functions of $x$ to which the lattice sums $G_\Bbbk^{C_x}(\tau)$ will belong. Let $\Ab$ be the set of functions $h\colon\QQ\to\CC$ with the following three properties. The limits $h(u^{+})$ for $u\in\QQ$ and $h(+\infty)$ exist. The jumps of $h$ are absolutely summable,
\begin{equation}\label{eq:summable}
 \sum_{u\in\QQ}|h(u)-h(u^{+})|<\infty.
\end{equation}
The function $h$ is the sum of its jumps,
\begin{equation}\label{eq:jump-sum}
 h(x)=h(+\infty)+\sum_{\substack{u\in\QQ\\ u\ge x}}\big(h(u)-h(u^{+})\big)\qquad(x\in\QQ).
\end{equation}
On an interval of $\RR$, functions of this kind are the saltus functions of bounded variation \cite{RN}. For $h\in\Ab$ put
\[
 \|h\|=|h(+\infty)|+\sum_{u\in\QQ}|h(u)-h(u^{+})|.
\]
By \eqref{eq:jump-sum}, $|h(x)|\le\|h\|$ for all $x\in\QQ$. For example, let $u_1<\dots<u_N$ be rational numbers. A function which is constant on each of the sets $x\le u_1$, $u_1<x\le u_2$, \dots, $u_{N-1}<x\le u_N$, $x>u_N$ belongs to $\Ab$. The lattice sums may have jumps at all rational numbers.

\Cref{ex:depth-one} is the case of depth one, where the three properties can be verified directly. Let $k\ge3$ and $x\in\QQ$, and let $\lambda=m\tau+n$ be a lattice point. If $m>0$, then $\lambda\in C_x$ if and only if $x\le-n/m$. If $m<0$, then $\lambda\in C_x$ if and only if $x>-n/m$. If $m=0$, then $\lambda\in C_x$ if and only if $n<0$. Moreover $-P=\{(m,n):\ m<0,\ \text{or}\ m=0\ \text{and}\ n<0\}$. Hence
\[
 G_k^{C_x}(\tau)-G_k^{-P}(\tau)=\sum_{\substack{m>0\\ -n/m\ge x}}\frac{1}{(m\tau+n)^k}-\sum_{\substack{m<0\\ -n/m\ge x}}\frac{1}{(m\tau+n)^k}\,,
\]
where both sums converge absolutely by \cref{lem:lattice}. We group the lattice points according to the rational number $u=-n/m$. If $u=-d/c$ with coprime integers $c\ge1$ and $d$, then the points with $-n/m=u$ are $\lambda=j(c\tau+d)$ with $j\in\ZZ$, $j\ne0$, and $m>0$ means $j>0$. Since $G_k^{-P}=(-1)^kG_k$ by \cref{lem:change}, we obtain
\begin{equation}\label{eq:depth-one-jumps}
 G_k^{C_x}(\tau)=(-1)^kG_k(\tau)+\sum_{\substack{u=-d/c\in\QQ\\ u\ge x}}\big(1-(-1)^k\big)\frac{\zeta(k)}{(c\tau+d)^k}\qquad(x\in\QQ),
\end{equation}
and the series converges absolutely. By dominated convergence, the right side tends to the sum over $u>x_0$ as $x\to x_0^{+}$, and to $(-1)^kG_k(\tau)$ as $x\to+\infty$. Thus the limits of $x\mapsto G_k^{C_x}(\tau)$ exist, its jump at $u=-d/c$ is $(1-(-1)^k)\zeta(k)(c\tau+d)^{-k}$, the jumps are absolutely summable, and \eqref{eq:depth-one-jumps} is \eqref{eq:jump-sum}. So $x\mapsto G_k^{C_x}(\tau)$ belongs to $\Ab$. Since $G_k^{C_0}=G_k|_kS$ by \cref{lem:change}, \eqref{eq:depth-one-jumps} at $x=0$ is the formula in \cref{ex:depth-one} for odd $k$, and it is $G_k|_kS=G_k$ for even $k$. In higher depth, and for entries equal to $2$, the jump at $u$ also involves lattice points off the line $cn=dm$. The rest of this section shows that the lattice sums still belong to $\Ab$.

Two classical facts concern functions on an interval of $\RR$. A saltus function is determined by its jumps up to an additive constant \cite{RN}. Functions of bounded variation form an algebra under pointwise multiplication \cite{HS}. The next lemma gives the versions on $\QQ$ that we need.

\begin{lemma}\label{lem:step}
\begin{enumerate}
\item Let $b\in\CC$, and let $b_u\in\CC$ for $u\in\QQ$ with $\sum_{u\in\QQ}|b_u|<\infty$. Then
\[
 h(x)=b+\sum_{{x\le u\in\QQ}}b_u\qquad(x\in\QQ)
\]
defines an element of $\Ab$ with $h(+\infty)=b$, $h(u)-h(u^{+})=b_u$ for all $u\in\QQ$, and $\|h\|=|b|+\sum_u|b_u|$.
\item If $h,h'\in\Ab$, then $h+h'\in\Ab$ and $hh'\in\Ab$, with $\|h+h'\|\le\|h\|+\|h'\|$ and $\|hh'\|\le\|h\|\,\|h'\|$.
\item Let $h^{(1)},h^{(2)},\ldots\in\Ab$ with $\sum_i\|h^{(i)}\|<\infty$. Then $h=\sum_ih^{(i)}$ converges uniformly on $\QQ$, $h\in\Ab$, and $\|h\|\le\sum_i\|h^{(i)}\|$. The limits of $h$ at $x_0^{+}$ and $+\infty$ are the sums of the corresponding limits of the functions $h^{(i)}$.
\end{enumerate}
\end{lemma}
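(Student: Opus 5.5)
For (i), I will check the three defining properties of $\Ab$ directly, using dominated convergence over the countable index set $\QQ$. Since $\sum_u|b_u|<\infty$, the series $\sum_{u\ge x}b_u$ converges absolutely. For fixed $x_0$, as $x\to x_0^{+}$ the indicator $\one_{u\ge x}$ tends pointwise to $\one_{u>x_0}$: for $u>x_0$ we eventually have $x\le u$, and for $u\le x_0$ we have $u<x$. Hence $h(x_0^{+})=b+\sum_{u>x_0}b_u$, and so $h(x_0)-h(x_0^{+})=b_{x_0}$. In the same way $h(+\infty)=b$, because $\one_{u\ge x}\to0$ as $x\to+\infty$. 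Then \eqref{eq:summable} is the hypothesis, \eqref{eq:jump-sum} is the definition of $h$, and the formula for $\|h\|$ follows at once.

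For (ii), the sum is immediate: limits add, jumps add, \eqref{eq:jump-sum} is linear, and the triangle inequality gives the norm bound. The product is the main step. Limits of $hh'$ exist as products of limits. At each $u$ I will write
\[
 h(u)h'(u)-h(u^+)h'(u^+)=\big(h(u)-h(u^+)\big)h'(u)+h(u^+)\big(h'(u)-h'(u^+)\big),
\]
and use $|h'(u)|\le\|h'\|$ and $|h(u^+)|\le\|h\|$; the latter holds because $h(u^+)=h(+\infty)+\sum_{v>u}(\text{jumps})$, obtained by passing to the limit in \eqref{eq:jump-sum} as in (i). This gives $\sum_u|\text{jump of }hh'|\le J\|h'\|+\|h\|J'$, where $J,J'$ are the total jumps of $h,h'$. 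Adding $|h(+\infty)h'(+\infty)|$, the bound $(|h(+\infty)|+J)(|h'(+\infty)|+J')=\|h\|\,\|h'\|$ dominates everything once the jump estimate is sharpened to $J\|h'\|$ replaced by $J(|h'(+\infty)|+J')$; this sharpening is just a cross-term check and is not a real obstacle.

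The one real point is \eqref{eq:jump-sum} for $hh'$. I will not verify it by manipulating the product directly. Instead, I will set $b=h(+\infty)h'(+\infty)$ and $b_u=$ the jump of $hh'$ at $u$, which are absolutely summable by the estimate above. By (i), this defines $g\in\Ab$ with the same jumps and the same limit at $+\infty$ as $hh'$. It remains to show $g=hh'$. Put $f=hh'-g$. Then $f$ has zero jumps everywhere, $f(+\infty)=0$, and all right limits of $f$ exist.

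To prove $f=0$, I will expand $h$ and $h'$ as their jump sums. Truncating to finitely many jumps gives step functions $h_N,h'_N$ with $\|h-h_N\|,\|h'-h'_N\|\to0$ in sup norm. For step functions with finitely many jump points, the product is again such a step function, and \eqref{eq:jump-sum} holds for it by a finite telescoping argument. The identity then passes to the limit, because the jumps of $h_Nh'_N$ converge in $\ell^1$ to those of $hh'$ by the same product estimate. This approximation argument is the main obstacle, and it is the step I would write out carefully.

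For (iii), I will use $|h^{(i)}(x)|\le\|h^{(i)}\|$ to get uniform convergence. The limits at $x_0^{+}$ and $+\infty$ can be interchanged with the sum by uniform convergence, which is a Weierstrass M-test on the limits. The jumps of $h$ are therefore the sums of the jumps of the $h^{(i)}$, and they are absolutely summable with total at most $\sum_i\|h^{(i)}\|$. Finally, \eqref{eq:jump-sum} for $h$ follows by summing \eqref{eq:jump-sum} for each $h^{(i)}$ and exchanging the two absolutely convergent sums by Fubini.
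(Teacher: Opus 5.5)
Your parts (i) and (iii) are fine. Your argument for the product in (ii) (a Leibniz formula for the jumps, then approximation by finite step functions to obtain \eqref{eq:jump-sum}) does prove $hh'\in\Ab$. However, the norm inequality $\|hh'\|\le\|h\|\,\|h'\|$ does not follow from what you wrote.

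Write $b=h(+\infty)$ and $b'=h'(+\infty)$, write $b_u,b'_u$ for the jumps, and write $J,J'$ for the total absolute sums of the jumps. The bounds $|h'(u)|\le\|h'\|$ and $|h(u^+)|\le\|h\|$ give
\[
 |bb'|+J\|h'\|+\|h\|J'=\|h\|\,\|h'\|+JJ'.
\]
Your proposed sharpening replaces $\|h'\|$ by $|h'(+\infty)|+J'$, which is the same number, so the excess $JJ'$ remains.

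What is needed is to insert the exact values $h'(u)=b'+\sum_{v\ge u}b'_v$ and $h(u^+)=b+\sum_{w>u}b_w$ into your formula. The jump of $hh'$ at $x'$ then becomes
\[
 b_{x'}b'+bb'_{x'}+b_{x'}\sum_{v\ge x'}b'_v+b'_{x'}\sum_{u>x'}b_u .
\]
Summed over $x'$, the two cross terms run over the pairs $(u,v)$ with $u\le v$ and with $u>v$, respectively. So each product $b_ub'_v$ occurs exactly once, and the cross terms contribute at most $JJ'$ rather than $2JJ'$. This gives exactly $(|b|+J)(|b'|+J')$.

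The last two terms of this display equal $\sum_{\min(u,v)=x'}b_ub'_v$, and this is where the paper starts. It multiplies the absolutely convergent series $h(x)=b+\sum_{u\ge x}b_u$ and $h'(x)=b'+\sum_{v\ge x}b'_v$. It then groups the terms by $\min(u,v)$, using that $u\ge x$ and $v\ge x$ hold if and only if $\min(u,v)\ge x$. This rearrangement writes $hh'$ directly in the form of (i). So membership in $\Ab$, the identity \eqref{eq:jump-sum} for $hh'$ and the sharp norm bound all come out at once, and the approximation step you identified as the main obstacle is not needed.

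Your approximation argument is nevertheless valid: bilinearity of $(h,h')\mapsto$ jumps of $hh'$, together with your crude estimate, gives $\ell^1$-convergence of the jumps of $h_Nh'_N$. You were also right not to conclude $f=0$ from vanishing jumps and $f(+\infty)=0$ alone, since that fails on $\QQ$, e.g.\ for $x\mapsto\one_{\{x<\sqrt2\}}$. Your route is simply a longer path to the same formula.

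In (iii) you pass the limits through the sum by uniform convergence, while the paper rearranges the double series of jumps and applies (i). Both work.
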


\begin{proof}
(i) The series converges absolutely and uniformly in $x$. For fixed $u$, the term of the series with index $u$ is $b_u\one_{\{x\le u\}}$, where $\one_{\{x\le u\}}$ is $1$ for $x\le u$ and $0$ for $x>u$. Hence
\[
 \lim_{x\to+\infty}b_u\one_{\{x\le u\}}=0,\text{ and }
 \lim_{x\to x_0^{+}}b_u\one_{\{x\le u\}}=
 \begin{cases}
  b_u & \text{if } u>x_0,\\
  0 & \text{if } u\le x_0.
 \end{cases}
\]
By dominated convergence, the limits of $h$ exist and
\[
 h(+\infty)=b,\qquad h(x_0^{+})=b+\sum_{u>x_0}b_u,\qquad h(x_0)-h(x_0^{+})=b_{x_0}.
\]
Hence \eqref{eq:summable} holds, \eqref{eq:jump-sum} is the definition of $h$, and $\|h\|=|b|+\sum_u|b_u|$.

(ii) For $h+h'$ we have $(h+h')(+\infty)=h(+\infty)+h'(+\infty)$ and $(h+h')(u)-(h+h')(u^{+})=h(u)-h(u^{+})+h'(u)-h'(u^{+})$. Hence \eqref{eq:summable} and \eqref{eq:jump-sum} hold for $h+h'$, and $\|h+h'\|\le\|h\|+\|h'\|$. For $hh'$ write $b=h(+\infty)$, $b_u=h(u)-h(u^{+})$, $b'=h'(+\infty)$ and $b'_v=h'(v)-h'(v^{+})$, so that $h(x)=b+\sum_{u\ge x}b_u$ and $h'(x)=b'+\sum_{v\ge x}b'_v$ by \eqref{eq:jump-sum}. Since $u\ge x$ and $v\ge x$ hold if and only if $\min(u,v)\ge x$, multiplying the two absolutely convergent series and collecting the terms with the same value of $\min(u,v)$ gives
\[
 h(x)h'(x)=bb'+\sum_{x'\ge x}\Big(bb'_{x'}+b_{x'}b'+\sum_{\min(u,v)=x'}b_ub'_v\Big),
\]
and the sum of the absolute values of the coefficients is at most $|b|\sum_v|b'_v|+|b'|\sum_u|b_u|+\sum_u|b_u|\sum_v|b'_v|$. By (i), $hh'\in\Ab$ and
\[
 \|hh'\|\le|bb'|+|b|\sum_v|b'_v|+|b'|\sum_u|b_u|+\sum_u|b_u|\sum_v|b'_v|=\|h\|\,\|h'\|.
\]

(iii) Write $b^{(i)}=h^{(i)}(+\infty)$ and $b^{(i)}_u=h^{(i)}(u)-h^{(i)}(u^{+})$, so that $h^{(i)}(x)=b^{(i)}+\sum_{u\ge x}b^{(i)}_u$ by \eqref{eq:jump-sum}. Since $\sum_i|b^{(i)}|+\sum_i\sum_u|b^{(i)}_u|=\sum_i\|h^{(i)}\|<\infty$, the sums $b=\sum_ib^{(i)}$ and $b_u=\sum_ib^{(i)}_u$ converge absolutely, $|b|+\sum_u|b_u|\le\sum_i\|h^{(i)}\|$, and rearranging the absolutely convergent double series gives $h(x)=\sum_ih^{(i)}(x)=b+\sum_{u\ge x}b_u$. By (i), $h\in\Ab$ with $\|h\|\le\sum_i\|h^{(i)}\|$,
\[
 h(+\infty)=b=\sum_ih^{(i)}(+\infty),\qquad
 h(x_0^{+})=b+\sum_{u>x_0}b_u=\sum_i\Big(b^{(i)}+\sum_{u>x_0}b^{(i)}_u\Big)=\sum_ih^{(i)}(x_0^{+}).
\]
The convergence is uniform since $|h^{(i)}(x)|\le\|h^{(i)}\|$.
\end{proof}

By \eqref{eq:jump-sum} and \cref{lem:step}~(i), the map $h\mapsto\big(h(+\infty),(h(u)-h(u^{+}))_{u\in\QQ}\big)$ is a bijection of $\Ab$ onto $\CC\oplus\ell^1(\QQ)$, and $\|h\|$ is the norm $|b|+\sum_u|b_u|$ of the image $(b,(b_u)_u)$. Hence $\Ab$ is a Banach space, and by \cref{lem:step}~(ii) a commutative Banach algebra with unit. For a set $E\subset\ZZ\tau+\ZZ$ we write $\one_E$ for its indicator function.

\begin{lemma}\label{lem:indicator}
Let $\lambda=m\tau+n\in\ZZ\tau+\ZZ$ with $\lambda\ne0$. For $x\in\QQ$,
\[
 \one_{C_x}(\lambda)=
 \begin{cases}
  1 & \text{if } m>0 \text{ and } x\le-n/m,\\
  1 & \text{if } m<0 \text{ and } x>-n/m,\\
  1 & \text{if } m=0 \text{ and } n<0,\\
  0 & \text{otherwise}.
 \end{cases}
\]
The function $x\mapsto\one_{C_x}(\lambda)$ belongs to $\Ab$ and has norm at most $2$. For every $x_0\in\QQ$,
\begin{equation}\label{eq:indicator-limits}
 \lim_{x\to x_0^{+}}\one_{C_x}(\lambda)=\one_{C_{x_0}^+}(\lambda),\text{ and }
 \lim_{x\to+\infty}\one_{C_x}(\lambda)=\one_{-P}(\lambda).
\end{equation}
\end{lemma}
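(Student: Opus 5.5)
The plan is a direct case analysis on the sign of $m$, followed by an application of \cref{lem:step}~(i) and then passage to the limits.

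First, the formula for $\one_{C_x}(\lambda)$. By definition $\lambda=m\tau+n\in C_x$ if and only if $n+xm<0$, or $n+xm=0$ and $m>0$. For $m>0$, dividing by $m$ turns the condition into $x<-n/m$ or $x=-n/m$, that is, $x\le -n/m$. For $m<0$, dividing reverses the inequality, so $n+xm<0$ becomes $x>-n/m$, while the boundary case $x=-n/m$ is excluded because $m>0$ fails. For $m=0$ the condition reads $n<0$, or $n=0$ and $0>0$; the second alternative is impossible and $n\ne0$ anyway, so it reduces to $n<0$. This gives the stated four cases.

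Next, membership in $\Ab$. I will write the function in the form of \cref{lem:step}~(i) with at most one nonzero jump. Put $u_0=-n/m$ when $m\ne0$.
\begin{enumerate}
\item If $m>0$, the function is $0+\one_{\{x\le u_0\}}$, so $b=0$ and $b_{u_0}=1$.
\item If $m<0$, the function is $1-\one_{\{x\le u_0\}}$, so $b=1$ and $b_{u_0}=-1$.
\item If $m=0$, the function is the constant $\one_{\{n<0\}}$, so $b\in\{0,1\}$ and all $b_u=0$.
\end{enumerate}
In each case \cref{lem:step}~(i) gives membership in $\Ab$ with norm $|b|+\sum_u|b_u|\le2$, which is the required bound.

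Finally, the limits \eqref{eq:indicator-limits}. The limit at $+\infty$ is $b$: it equals $0$ for $m>0$, $1$ for $m<0$, and $\one_{\{n<0\}}$ for $m=0$. This is exactly $\one_{-P}(\lambda)$, since $-P=\{m<0, \text{ or } m=0 \text{ and } n<0\}$. For the right limit at $x_0$, I will compare directly with $C_{x_0}^+$, which is defined by $n+x_0m<0$, or $n+x_0m=0$ and $m<0$.
\begin{enumerate}
\item If $m>0$, the limit is $\one_{\{x_0<u_0\}}$. Membership in $C_{x_0}^+$ means $x_0<u_0$, since the boundary case would require $m<0$; the two agree.
\item If $m<0$, the limit is $\one_{\{x_0\ge u_0\}}$. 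Membership in $C_{x_0}^+$ means $x_0>u_0$ from the strict inequality, or $x_0=u_0$ from the boundary case, i.e.\ $x_0\ge u_0$; the two agree.
\item If $m=0$, both sides equal $\one_{\{n<0\}}$.
\end{enumerate}

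I expect no real obstacle here. The only care needed is the sign bookkeeping on the boundary line $n+xm=0$, which is exactly where $C_x$ and $C_x^+$ differ.
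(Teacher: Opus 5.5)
Your proposal is correct and takes essentially the same route as the paper. Both use a case analysis on the sign of $m$ for the formula, and both take $(b,b_{-n/m})=(0,1)$ or $(1,-1)$ in \cref{lem:step}~(i) to get membership in $\Ab$ with norm at most $2$. Both then compare the right limit and the limit at $+\infty$ directly with the definitions of $C_{x_0}^+$ and $-P$.
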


\begin{proof}
By definition, $\lambda\in C_x$ if and only if $n+xm<0$, or $n+xm=0$ and $m>0$. For $m>0$ this means $x\le-n/m$, for $m<0$ it means $x>-n/m$, and for $m=0$ it means $n<0$. This proves the formula. In the notation of \cref{lem:step}~(i), the function $x\mapsto\one_{C_x}(\lambda)$ has $(b,b_{-n/m})=(0,1)$ if $m>0$ and $(b,b_{-n/m})=(1,-1)$ if $m<0$, with all other $b_u=0$, and it is constant if $m=0$. Hence it belongs to $\Ab$ with norm at most $2$. By the formula,
\[
 \lim_{x\to x_0^{+}}\one_{C_x}(\lambda)=
 \begin{cases}
  1 & \text{if } m>0 \text{ and } x_0<-n/m,\\
  1 & \text{if } m<0 \text{ and } x_0\ge-n/m,\\
  1 & \text{if } m=0 \text{ and } n<0,\\
  0 & \text{otherwise},
 \end{cases}
 \text{ and }
 \lim_{x\to+\infty}\one_{C_x}(\lambda)=
 \begin{cases}
  1 & \text{if } m<0,\\
  1 & \text{if } m=0 \text{ and } n<0,\\
  0 & \text{otherwise}.
 \end{cases}
\]
The first case distinction is the definition of $C_{x_0}^+$, since $n+x_0m<0$ means $x_0<-n/m$ for $m>0$ and $x_0>-n/m$ for $m<0$, and $n+x_0m=0$ means $x_0=-n/m$. The second one is the definition of $-P$. This proves \eqref{eq:indicator-limits}.
\end{proof}

\subsection{Sums over lattice intervals}

Let $\Bbbk=(k_1,\dots,k_r)$ be an admissible index. The entries $k_i$ with $i<r$ may be equal to $2$, and these entries need an extra argument. In \eqref{eq:GC} a point with exponent $2$ has the coefficient $\lambda^{-2}$. The points with exponent $2$ which occur for some $x\in\QQ$ are not contained in a strip of finite width, and the sum of $|\lambda|^{-2}$ over them diverges. So \cref{lem:step}~(iii) cannot be applied to \eqref{eq:GC} term by term. In this subsection we treat the sum over consecutive points with exponent $2$ between two points $\alpha$ and $\beta$. This is the elementary symmetric sum $e_N^C(\alpha,\beta)$ defined below. The main step is \eqref{eq:p-j}. It writes a sum over a lattice interval as the difference of two translates of a sum over $C$. The terms of this difference are $O(|\eta|^{-3})$.

Let $C$ be a positive set of the form \eqref{eq:lex}, and let $\alpha,\beta\in\ZZ\tau+\ZZ$. For integers $j\ge1$ and $N\ge0$ put
\[
 p_j^C(\alpha,\beta)=\sum_{\alpha\prec_C\lambda\prec_C\beta}\frac{1}{\lambda^{2j}},\qquad
 e_N^C(\alpha,\beta)=\sum_{\alpha\prec_C\lambda_1\prec_C\cdots\prec_C\lambda_N\prec_C\beta}\frac{1}{\lambda_1^2\cdots\lambda_N^2}
\]
if $0\preceq_C\alpha\prec_C\beta$, with $e_0^C(\alpha,\beta)=1$, and put $e_N^C(\alpha,\beta)=0$ otherwise. By \eqref{eq:interval-strip} and \cref{lem:lattice}~(iii) these series converge absolutely. They are the power sums and the elementary symmetric sums of the numbers $\lambda^{-2}$ with $\alpha\prec_C\lambda\prec_C\beta$.

\begin{lemma}\label{lem:blocks}
Let $\alpha,\beta\in\ZZ\tau+\ZZ$ with $\beta\ne0$, and let $N\ge0$. Then the function $h(x)=e_N^{C_x}(\alpha,\beta)$ on $\QQ$ belongs to $\Ab$, and
\[
 \|h\|\le4\big(B\log(3+|\alpha|+|\beta|)\big)^N
\]
with a constant $B\ge1$ depending only on $\tau$. For $x_0\in\QQ$ we have $h(x_0^{+})=e_N^{C_{x_0}^+}(\alpha,\beta)$ and $h(+\infty)=e_N^{-P}(\alpha,\beta)$.
\end{lemma}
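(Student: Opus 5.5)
The plan is to reduce the elementary symmetric sums to power sums, and to use that $\Ab$ is a Banach algebra. Write $L=\log(3+|\alpha|+|\beta|)\ge1$. Whenever $0\preceq_C\alpha\prec_C\beta$, Newton's identity
\[
 Ne_N^C(\alpha,\beta)=\sum_{j=1}^N(-1)^{j-1}e_{N-j}^C(\alpha,\beta)\,p_j^C(\alpha,\beta)
\]
holds, since all series involved converge absolutely. I extend $p_j^C(\alpha,\beta)$ by $0$ when the condition $0\preceq_C\alpha\prec_C\beta$ fails. Then the identity holds for every $x$ with $C=C_x$.

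\emph{Base case $e_0$.} The function $e_0^{C_x}(\alpha,\beta)$ is the indicator of $0\preceq_{C_x}\alpha\prec_{C_x}\beta$. This is $\one_{C_x}(\beta-\alpha)$ times either $\one_{C_x}(\alpha)$ or the constant $1$ (the latter when $\alpha=0$). By \cref{lem:indicator} and \cref{lem:step}~(ii) it lies in $\Ab$ with norm at most $4$.

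\emph{Claim for the power sums.} It suffices to show $\|p_j^{C_x}(\alpha,\beta)\|\le4AL$ with $A$ depending only on $\tau$. Then induction on $N$ with $B=\max(1,4A)$ gives
\[
 N\|e_N\|\le\sum_{j=1}^N4(BL)^{N-j}\cdot4AL\le4N(BL)^N,
\]
using $L\ge1$.

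\emph{The case $j\ge2$.} I write
\[
 p_j^{C_x}=\sum_{\lambda\ne0}\lambda^{-2j}\,e_0^{C_x}(\alpha,\beta)\,\one_{C_x}(\lambda-\alpha)\,\one_{C_x}(\beta-\lambda).
\]
Each term is a product of elements of $\Ab$ with norm $\le16|\lambda|^{-2j}$, and $\sum|\lambda|^{-4}<\infty$ by \cref{lem:lattice}~(i). So \cref{lem:step}~(iii) applies, with a bound independent of $\alpha,\beta$.

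\emph{The case $j=1$: translation trick.} Here I use \eqref{eq:p-j}. Since $\{\lambda:\alpha\prec_C\lambda\}=\alpha+C$ and $\{\lambda:\beta\preceq_C\lambda\}=\beta+(C\cup\{0\})$, under the condition $0\preceq_C\alpha\prec_C\beta$ we get
\[
 p_1^C(\alpha,\beta)=\sum_{\eta\in C}\Big(\frac1{(\alpha+\eta)^2}-\frac1{(\beta+\eta)^2}\Big)-\frac1{\beta^2}.
\]
All denominators are nonzero, because $\alpha+\eta\succ_C0$ and $\beta+\eta\succ_C0$. So
\[
 p_1^{C_x}=e_0^{C_x}(\alpha,\beta)\Big(\sum_{\eta\ne0}\one_{C_x}(\eta)\,t_\eta-\beta^{-2}\Big),
\]
where $t_\eta$ is the bracket (a number, independent of $x$). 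It remains to show $\sum_\eta|t_\eta|\ll L$. This is the main obstacle, and I would handle it by splitting at $R=2(|\alpha|+|\beta|)+1$.
\begin{itemize}
\item For $|\eta|\ge R$ we have $|t_\eta|\ll|\beta-\alpha|\,|\eta|^{-3}$. By \cref{lem:lattice}~(ii) this part sums to $\ll|\beta-\alpha|/R\ll1$.
\item For $|\eta|<R$ the points $\alpha+\eta$ and $\beta+\eta$ are nonzero lattice points in the disc of radius $3R$. Using the dyadic counting from the proof of \cref{lem:lattice}, $\sum_{0<|\lambda|<3R}|\lambda|^{-2}\ll\log(3+R)\ll L$.
\end{itemize}
Hence, by \cref{lem:step}~(ii),(iii), $p_1^{C_x}\in\Ab$ with norm $\le4AL$ after enlarging $A$.

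\emph{Limits.} By \cref{lem:step}~(ii),(iii), all one-sided limits and limits at $+\infty$ are obtained termwise. By \eqref{eq:indicator-limits}, each indicator $\one_{C_x}$ is replaced by $\one_{C_{x_0}^+}$, respectively $\one_{-P}$. The representations above of $e_0$ and of $p_j$ (including the translation formula) hold verbatim for any positive set of the form \eqref{eq:lex}. So these limits are $p_j^{C_{x_0}^+}(\alpha,\beta)$ and $p_j^{-P}(\alpha,\beta)$. Applying Newton's identity for those sets then gives $h(x_0^+)=e_N^{C_{x_0}^+}(\alpha,\beta)$ and $h(+\infty)=e_N^{-P}(\alpha,\beta)$.
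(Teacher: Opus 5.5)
Your architecture matches the paper's. Newton's identities reduce $e_N^{C_x}$ to power sums, $p_1$ is rewritten by the translation formula \eqref{eq:p-j} so that $C_x$ enters only through indicators with $O(|\eta|^{-3})$ coefficients, and the Banach algebra $\Ab$ does the rest. The recursive form of Newton's identity and your direct treatment of $p_j$ for $j\ge2$ are harmless variations.

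\textbf{The gap is the $j=1$ translation formula itself,} which the paper calls its main step. You derive it from $\{\lambda:\alpha\prec_C\lambda\}=\alpha+C$ and $\{\lambda:\beta\preceq_C\lambda\}=\beta+(C\cup\{0\})$. That amounts to writing $p_1^C(\alpha,\beta)=\sum_{\alpha\prec_C\lambda}\lambda^{-2}-\sum_{\beta\preceq_C\lambda}\lambda^{-2}$ and substituting $\lambda=\alpha+\eta$ and $\lambda=\beta+\eta$. For $j=1$ both sums run over lattice half-planes and do not converge absolutely, so this splitting is not available. Your right side pairs $\alpha+\eta$ with $\beta+\eta$, which is not a rearrangement of the strip sum defining $p_1^C$. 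The absolute convergence of $\sum_\eta t_\eta$ that you prove does not by itself show that the two series have the same value, so a limiting argument is needed.

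The paper replaces $z^{-2}$ by $f_\varepsilon(z)=z^{-2}|z|^{-\varepsilon}$, for which the splitting is legitimate. It then proves $|f_\varepsilon(\alpha+\eta)-f_\varepsilon(\beta+\eta)|\ll|\beta-\alpha|\,|\eta|^{-3}$ for large $|\eta|$, uniformly in $0\le\varepsilon\le1$, and lets $\varepsilon\to0^{+}$ on both sides by dominated convergence. Alternatively, you can truncate to $|\eta|\le R$ and check that the symmetric difference of the discs of radius $R$ about $\alpha$ and $\beta$ contributes $O((1+|\beta-\alpha|)/R)$. Your tail estimate is the right ingredient, but it has to be used to prove the identity, not only to bound $\sum_\eta|t_\eta|$. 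The same identity for $C_{x_0}^+$ and $-P$ is also needed in your limit step.

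Relatedly, $t_\eta$ is undefined for $\eta\in\{-\alpha,-\beta\}$. The sum defining your function of $x$ must therefore run over $\eta\notin\{0,-\alpha,-\beta\}$, as in the paper. Nothing is lost, because these $\eta$ are not in $C$ when $0\preceq_C\alpha\prec_C\beta$.

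\textbf{Second, your claim $\|p_j^{C_x}(\alpha,\beta)\|\le4AL$ with $A$ depending only on $\tau$ does not follow from your $j\ge2$ argument.} That argument gives $16\sum_{\lambda\ne0}|\lambda|^{-2j}$, which is at least $32\delta^{-2j}$. This is unbounded in $j$ whenever $\delta=\delta(\tau)<1$; recall that $\delta\le1$ always, since $1$ is a lattice point. As written, your $B$ would therefore depend on $N$, contrary to the statement.

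The repair is easy and is what the paper does. Use $16\sum_{\lambda\ne0}|\lambda|^{-2j}\le16\delta^{4-2j}\sum_{\lambda\ne0}|\lambda|^{-4}\le B^j$ for a suitable $B$ depending only on $\tau$. With $\|p_1^{C_x}\|\le BL$ and $\|p_j^{C_x}\|\le B^j\le(BL)^j$, your induction still closes, since $\sum_{j=1}^N4(BL)^{N-j}(BL)^j=4N(BL)^N$. This is why the paper's bound reads $\|h_j\|\le B^j\log A$ rather than being uniform in $j$.
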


\begin{proof}
Let $C$ be one of the sets $C_x$, $C_x^+$ and $-P$, and assume $0\preceq_C\alpha\prec_C\beta$. First, $e_N^C(\alpha,\beta)$ is a polynomial in the power sums. Let $\lambda$ run over finitely many nonzero numbers. The elementary symmetric functions and the power sums of the numbers $\lambda^{-2}$ are related by Newton's identities \cite{Mac}. In terms of generating functions, these identities read
\[
 \prod_\lambda\big(1+Y\lambda^{-2}\big)=\exp\Big(\sum_{j\ge1}\frac{(-1)^{j-1}Y^j}{j}\sum_\lambda\lambda^{-2j}\Big)
\]
as power series in $Y$. Comparing the coefficients of $Y^N$ gives
\begin{equation}\label{eq:newton}
 e_N^C(\alpha,\beta)=\sum_{\nu}\ \prod_{j=1}^N\frac{1}{\nu_j!}\Big(\frac{(-1)^{j-1}p_j^C(\alpha,\beta)}{j}\Big)^{\nu_j},
\end{equation}
where $\nu=(\nu_1,\dots,\nu_N)$ runs over $\ZZ_{\ge0}^N$ with $\nu_1+2\nu_2+\cdots+N\nu_N=N$. The set $\{\lambda:\alpha\prec_C\lambda\prec_C\beta\}$ may be infinite. By absolute convergence, \eqref{eq:newton} also holds for this set.

Next, the power sums are written as series in which $C$ enters only through indicator functions. For $j\ge1$,
\begin{equation}\label{eq:p-j}
 p_j^C(\alpha,\beta)=\sum_{\eta\in C}\Big(\frac{1}{(\alpha+\eta)^{2j}}-\frac{1}{(\beta+\eta)^{2j}}\Big)-\frac{1}{\beta^{2j}}\,.
\end{equation}
Here $0\preceq_C\alpha\prec_C\alpha+\eta$ and $0\prec_C\beta+\eta$, so no denominator vanishes. For $j\ge2$ the sums over $\alpha\prec_C\lambda$ and over $\beta\preceq_C\lambda$ below converge absolutely by \cref{lem:lattice}~(i). Since $\{\lambda:\beta\preceq_C\lambda\}\subset\{\lambda:\alpha\prec_C\lambda\}$, the substitutions $\lambda=\alpha+\eta$ and $\lambda=\beta+\eta$ with $\eta\in C$ give
\[
 p_j^C(\alpha,\beta)=\sum_{\alpha\prec_C\lambda}\frac{1}{\lambda^{2j}}-\sum_{\beta\preceq_C\lambda}\frac{1}{\lambda^{2j}}
 =\sum_{\eta\in C}\frac{1}{(\alpha+\eta)^{2j}}-\sum_{\eta\in C\cup\{0\}}\frac{1}{(\beta+\eta)^{2j}}\,,
\]
which is \eqref{eq:p-j}. For $j=1$ the two sums over $\lambda$ do not converge absolutely. In this case let $0<\varepsilon\le1$ and put $f_\varepsilon(z)=z^{-2}|z|^{-\varepsilon}$ for $z\ne0$. Since $\sum_{\lambda\ne0}|\lambda|^{-2-\varepsilon}<\infty$, the substitutions $\lambda=\alpha+\eta$ and $\lambda=\beta+\eta$ give
\[
 \sum_{\alpha\prec_C\lambda\prec_C\beta}f_\varepsilon(\lambda)
 =\sum_{\eta\in C}\big(f_\varepsilon(\alpha+\eta)-f_\varepsilon(\beta+\eta)\big)-f_\varepsilon(\beta).
\]
Put $A=3+|\alpha|+|\beta|$. Since $f_\varepsilon(z)=|z|^{-2-\varepsilon}e^{-2i\arg z}$, the derivative of $f_\varepsilon$ at $z$ has operator norm $(2+\varepsilon)|z|^{-3-\varepsilon}\le3|z|^{-3}$ for $|z|\ge1$ and $0\le\varepsilon\le1$. Let $|\eta|\ge2A$. Every point $z$ of the line segment from $\alpha+\eta$ to $\beta+\eta$ satisfies $|z|\ge|\eta|-\max(|\alpha|,|\beta|)\ge|\eta|/2\ge1$. Since $|\beta-\alpha|\le A$, the mean value inequality gives
\[
 |f_\varepsilon(\alpha+\eta)-f_\varepsilon(\beta+\eta)|\le|\beta-\alpha|\cdot3\Big(\frac{|\eta|}{2}\Big)^{-3}\le\frac{24A}{|\eta|^3}\qquad(|\eta|\ge2A,\ 0\le\varepsilon\le1).
\]
On the left side, $|f_\varepsilon(\lambda)|\le\max(1,\delta^{-1})|\lambda|^{-2}$, which is summable over the strip in \eqref{eq:interval-strip} by \cref{lem:lattice}~(iii). Letting $\varepsilon\to0^{+}$ on both sides by dominated convergence, where the finitely many terms with $|\eta|<2A$ converge individually, proves \eqref{eq:p-j} for $j=1$. For $|\eta|<2A$ with $\eta\notin\{-\alpha,-\beta\}$ we have $0<|\alpha+\eta|<3A$ and $0<|\beta+\eta|<3A$, and $\eta\mapsto\alpha+\eta$ and $\eta\mapsto\beta+\eta$ are injective. Since the disc $|\lambda|<3A$ lies in a strip of width $6A$, \cref{lem:lattice}~(iii) and~(ii) give
\begin{equation}\label{eq:p-1-bound}
 \sum_{\eta\notin\{0,-\alpha,-\beta\}}\Big|\frac{1}{(\alpha+\eta)^2}-\frac{1}{(\beta+\eta)^2}\Big|
 \le\sum_{0<|\lambda|<3A}\frac{2}{|\lambda|^2}+\sum_{|\eta|\ge2A}\frac{24A}{|\eta|^3}
 \ll\log(3+6A)+\frac{24A}{2A}\ll\log A.
\end{equation}

Now let $x\in\QQ$, and define $h_0(x)=\one_{C_x\cup\{0\}}(\alpha)\,\one_{C_x}(\beta-\alpha)$, and for $j\ge1$ define
\[
 h_j(x)=\sum_{\eta\notin\{0,-\alpha,-\beta\}}\one_{C_x}(\eta)\Big(\frac{1}{(\alpha+\eta)^{2j}}-\frac{1}{(\beta+\eta)^{2j}}\Big)-\frac{1}{\beta^{2j}}.
\]
Every indicator function here is an element of $\Ab$ of norm at most $2$: by \cref{lem:indicator} if its argument is nonzero, and because $\one_{C_x\cup\{0\}}(0)=1$ and $\one_{C_x}(0)=0$ do not depend on $x$ otherwise. Hence $h_0\in\Ab$ with $\|h_0\|\le4$ by \cref{lem:step}~(ii), and $h_j\in\Ab$ by \cref{lem:step}~(iii) as soon as its coefficients are absolutely summable, with
\[
 \|h_j\|\le2\sum_{\eta\notin\{0,-\alpha,-\beta\}}\Big|\frac{1}{(\alpha+\eta)^{2j}}-\frac{1}{(\beta+\eta)^{2j}}\Big|+\frac{1}{|\beta|^{2j}}\,.
\] For $j=1$ this holds by \eqref{eq:p-1-bound}, which together with $|\beta|^{-2}\le\delta^{-2}$ gives $\|h_1\|\le B\log A$ for a constant $B\ge1$ depending only on $\tau$. For $j\ge2$ the bound $|\lambda|\ge\delta$ gives
\[
 \|h_j\|\le4\sum_{\lambda\ne0}|\lambda|^{-2j}+|\beta|^{-2j}\le\delta^{-2j}\Big(4\delta^4\sum_{\lambda\ne0}|\lambda|^{-4}+1\Big).
\]
We may assume $B\ge\delta^{-2}\big(4\delta^4\sum_{\lambda\ne0}|\lambda|^{-4}+1\big)$. Since $4\delta^4\sum_{\lambda\ne0}|\lambda|^{-4}+1\ge1$ and $\log A\ge1$, this gives $\|h_j\|\le B^j\le B^j\log A$ for $2\le j\le N$. By definition, $h_0(x)=1$ if $0\preceq_{C_x}\alpha\prec_{C_x}\beta$, and $h_0(x)=0$ otherwise. In the first case $\alpha\in C_x\cup\{0\}$ and $\beta\in C_x$. Since $0\notin C_x$ and $C_x\cap(-C_x)=\emptyset$, this gives $-\alpha,-\beta\notin C_x$, so the sum over $\eta\in C_x$ in \eqref{eq:p-j} contains no term with $\eta\in\{0,-\alpha,-\beta\}$, and \eqref{eq:p-j} gives $h_j(x)=p_j^{C_x}(\alpha,\beta)$. By \eqref{eq:newton},
\[
 h=h_0\cdot\sum_{\nu}\ \prod_{j=1}^N\frac{1}{\nu_j!}\Big(\frac{(-1)^{j-1}h_j}{j}\Big)^{\nu_j}.
\]
By \cref{lem:step}~(ii), together with $\|h_0\|\le4$, $\sum_jj\nu_j=N$ and $\sum_j\nu_j\le N$, this gives
\[
 \|h\|\le4\sum_\nu\prod_{j=1}^N\frac{1}{\nu_j!}\Big(\frac{B^j\log A}{j}\Big)^{\nu_j}
 \le4(B\log A)^N\sum_\nu\prod_{j=1}^N\frac{1}{j^{\nu_j}\nu_j!}=4(B\log A)^N,
\]
since $\sum_\nu\prod_j(j^{\nu_j}\nu_j!)^{-1}$ is the coefficient of $Y^N$ in $\exp\big(\sum_{j\ge1}Y^j/j\big)=(1-Y)^{-1}$, which is $1$.

Finally, let $C'$ be $C_{x_0}^+$ or $-P$. For $0\le j\le N$ let $h_j^{C'}$ be the number defined like $h_j(x)$ with $C_x$ replaced by $C'$. By \cref{lem:indicator} and \cref{lem:step}~(iii), we have
\[
 h_j(x_0^{+})=h_j^{C_{x_0}^+},\qquad h_j(+\infty)=h_j^{-P}\qquad(0\le j\le N).
\]
The function $h$ is the polynomial expression above in $h_0$, $h_1$, \dots, $h_N$. Hence $h(x_0^{+})$ is the same expression in $h_0^{C_{x_0}^+}$, \dots, $h_N^{C_{x_0}^+}$. Likewise $h(+\infty)$ is the same expression in $h_0^{-P}$, \dots, $h_N^{-P}$. If $0\preceq_{C'}\alpha\prec_{C'}\beta$, then $h_0^{C'}=1$. Moreover $-\alpha,-\beta\notin C'$, since $0\notin C'$ and $C'\cap(-C')=\emptyset$. Hence $h_j^{C'}=p_j^{C'}(\alpha,\beta)$ by \eqref{eq:p-j}. Then the expression equals $e_N^{C'}(\alpha,\beta)$ by \eqref{eq:newton}. Otherwise $h_0^{C'}=0=e_N^{C'}(\alpha,\beta)$. This proves $h(x_0^{+})=e_N^{C_{x_0}^+}(\alpha,\beta)$ and $h(+\infty)=e_N^{-P}(\alpha,\beta)$.
\end{proof}

\subsection{Proof of \texorpdfstring{\cref{thm:transformation}}{Theorem 1.5}}

We now treat indices with entries equal to $2$ before the last entry. Since $k_r\ge3$, these entries form blocks between entries which are at least $3$. By \cref{lem:blocks}, the sum over a block between $\alpha$ and $\beta$ is a function of $x$ in $\Ab$ with norm at most $4(B\log(3+|\alpha|+|\beta|))^N$. The remaining points carry exponents which are at least $3$, so the sum over them converges by \cref{lem:lattice}.

\begin{proposition}\label{prop:jumps}
Let $\Bbbk$ be an admissible index. Then the function $h(x)=G_\Bbbk^{C_x}(\tau)$ on $\QQ$ belongs to $\Ab$, and for $x_0\in\QQ$ we have $h(x_0^{+})=G_\Bbbk^{C_{x_0}^+}(\tau)$ and $h(+\infty)=G_\Bbbk^{-P}(\tau)$.
\end{proposition}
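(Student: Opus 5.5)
We split the index $\Bbbk$ at the entries that are at least $3$. Let $i_1<i_2<\dots<i_s=r$ be the positions with $k_{i_t}\ge3$; between consecutive such positions, and before $i_1$, lie blocks of $N_t$ entries equal to $2$. Put $\beta_t=\lambda_{i_t}$ and $\beta_0=0$. Grouping the terms of \eqref{eq:GC} according to the points $\beta_1,\dots,\beta_s$ gives, for every positive set $C$ among $C_x$, $C_x^+$ and $-P$,
\[
 G_\Bbbk^C(\tau)=\sum_{\beta_1,\dots,\beta_s\ne0}\ \prod_{t=1}^s\frac{e_{N_t}^C(\beta_{t-1},\beta_t)}{\beta_t^{k_{i_t}}}\cdot\one_C(\beta_1)\prod_{t=2}^{s}\one_C(\beta_t-\beta_{t-1}).
\]
This uses the convention that $e_N^C(\alpha,\beta)=0$ unless $0\preceq_C\alpha\prec_C\beta$, so the product already forces the chain condition $0\prec_C\beta_1\prec_C\cdots\prec_C\beta_s$. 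The extra indicator factors are then redundant, but harmless.

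The identity is a rearrangement of the absolutely convergent series of \cref{lem:convergence}, since the inner sums over the $2$-blocks are exactly the $e_N^C$. The product of the indicators is also redundant when $N_t=0$: in that case $e_0^C(\beta_{t-1},\beta_t)=1$ is only set when $0\preceq_C\beta_{t-1}\prec_C\beta_t$, and otherwise it is $0$.

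Now fix the tuple $(\beta_1,\dots,\beta_s)$ of nonzero lattice points and look at the summand as a function of $x$ with $C=C_x$. By \cref{lem:blocks}, each factor $x\mapsto e_{N_t}^{C_x}(\beta_{t-1},\beta_t)$ is in $\Ab$, with norm at most $4(B\log(3+|\beta_{t-1}|+|\beta_t|))^{N_t}$. Its right limits are $e^{C_{x_0}^+}$ and its limit at $+\infty$ is $e^{-P}$. By \cref{lem:step}~(ii), the summand is in $\Ab$, and its norm is at most
\[
 4^s\prod_{t=1}^s\frac{\big(B\log(3+|\beta_{t-1}|+|\beta_t|)\big)^{N_t}}{|\beta_t|^{k_{i_t}}}.
\]
The limits of a product are the products of the limits, by the proof of \cref{lem:step}~(ii) or by continuity of multiplication in $\Ab$. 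Hence the summand's right limit at $x_0$ and its limit at $+\infty$ are the corresponding summands for $C_{x_0}^+$ and $-P$.

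It remains to show that these norms are summable over all tuples. This is the main point, and we expect it to be the main obstacle, although it should be routine. We bound $\log(3+|\beta_{t-1}|+|\beta_t|)\le\log(3+|\beta_{t-1}|)+\log(3+|\beta_t|)$ and expand the products. The sum then factors into single sums of the form $\sum_{\beta\ne0}|\beta|^{-k_{i_t}}(\log(3+|\beta|))^{\mu}$. Each of these is finite by \cref{lem:lattice}~(i), because every $k_{i_t}\ge3$.

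So \cref{lem:step}~(iii) applies. It shows that $h\in\Ab$ and that $h(x_0^+)$ and $h(+\infty)$ are the sums of the termwise limits. These termwise limits are, by the displayed decomposition applied to $C_{x_0}^+$ and to $-P$, exactly $G_\Bbbk^{C_{x_0}^+}(\tau)$ and $G_\Bbbk^{-P}(\tau)$.

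The only delicate points are two. First, the unrestricted sum over tuples must dominate the sum over chains for all $x$ simultaneously. It does, because the norm bound does not depend on $x$. Second, a factor $e_{N}$ with $\beta_{t-1}=0$, that is $\alpha=0$, is allowed in \cref{lem:blocks}, where only $\beta\ne0$ is required.
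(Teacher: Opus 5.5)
Your proposal is correct and follows the paper's proof essentially step by step. Both use the same anchor decomposition \eqref{eq:anchors} at the entries $\ge3$, \cref{lem:blocks} for the blocks of $2$'s, an $x$-independent norm bound made summable by \cref{lem:lattice}~(i), and \cref{lem:step}~(iii) for membership in $\Ab$ and for the limits. The only differences are cosmetic: your redundant indicator factors, and your binomial expansion of $\big(\log(3+|\beta_{t-1}|)+\log(3+|\beta_t|)\big)^{N_t}$, where the paper instead bounds the sum of the two logarithms by twice their product to get a single factorized bound with exponent $2r$.
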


\begin{proof}
Write
\(
 \Bbbk=(\{2\}^{N_1},l_1,\{2\}^{N_2},l_2,\dots,\{2\}^{N_t},l_t)
\)
with $N_i\ge0$ and $l_i\ge3$, where $\{2\}^N$ stands for $N$ entries equal to $2$. This decomposition exists and is unique since $k_r\ge3$. Let $C$ be one of the sets $C_x$, $C_x^+$ and $-P$. In \eqref{eq:GC}, the exponents $l_1,\dots,l_t$ belong to points $\lambda'_1\prec_C\cdots\prec_C\lambda'_t$, and between $\lambda'_{i-1}$ and $\lambda'_i$ there are exactly $N_i$ points with exponent $2$, where $\lambda'_0=0$. Summing first over these points, which is allowed by \cref{lem:convergence}, we obtain
\begin{equation}\label{eq:anchors}
 G_\Bbbk^C(\tau)=\sum_{\lambda'_1,\dots,\lambda'_t\ne0}\ \prod_{i=1}^t\frac{e_{N_i}^C(\lambda'_{i-1},\lambda'_i)}{(\lambda'_i)^{l_i}}\,,
\end{equation}
where the product vanishes unless $0\prec_C\lambda'_1\prec_C\cdots\prec_C\lambda'_t$.

Now let $C=C_x$ and fix $\lambda'_1,\dots,\lambda'_t$. By \cref{lem:blocks}, each $e_{N_i}^{C_x}(\lambda'_{i-1},\lambda'_i)$ is a function of $x$ in $\Ab$ of norm at most $4(B\log(3+|\lambda'_{i-1}|+|\lambda'_i|))^{N_i}$. Since $3+|\lambda'_{i-1}|+|\lambda'_i|\le(3+|\lambda'_{i-1}|)(3+|\lambda'_i|)$ and both logarithms on the right are at least $1$,
\[
 \log(3+|\lambda'_{i-1}|+|\lambda'_i|)\le\log(3+|\lambda'_{i-1}|)+\log(3+|\lambda'_i|)\le2\log(3+|\lambda'_{i-1}|)\log(3+|\lambda'_i|).
\]
Put $N_{t+1}=0$. Since $\lambda'_0=0$, $B\ge1$ and $N_1+\cdots+N_t\le r$, this gives
\begin{align*}
 \prod_{i=1}^t\big(B\log(3+|\lambda'_{i-1}|+|\lambda'_i|)\big)^{N_i}
 &\le(2B)^r(\log3)^{N_1}\prod_{i=1}^t\big(\log(3+|\lambda'_i|)\big)^{N_i+N_{i+1}}\\
 &\le(2B)^r\prod_{i=1}^t\big(\log(3+|\lambda'_i|)\big)^{2r}.
\end{align*}
In the last step we used $\log3\le\log(3+|\lambda'_1|)$. We also used that the exponents $2N_1+N_2$ and $N_i+N_{i+1}$ are at most $2r$ and that every logarithm is at least $1$. By \cref{lem:step}~(ii) and $t\le r$, the term of \eqref{eq:anchors} has norm at most
\[
 4^t(2B)^r\prod_{i=1}^t\frac{(\log(3+|\lambda'_i|))^{2r}}{|\lambda'_i|^{l_i}}
 \le(8B)^r\prod_{i=1}^t\frac{(\log(3+|\lambda'_i|))^{2r}}{|\lambda'_i|^{l_i}}\,.
\]
By \cref{lem:lattice}~(i) and $l_i\ge3$,
\[
 \sum_{\lambda'_1,\dots,\lambda'_t\ne0}\ (8B)^r\prod_{i=1}^t\frac{(\log(3+|\lambda'_i|))^{2r}}{|\lambda'_i|^{l_i}}
 =(8B)^r\prod_{i=1}^t\ \sum_{\lambda\ne0}\frac{(\log(3+|\lambda|))^{2r}}{|\lambda|^{l_i}}<\infty.
\]
Hence \cref{lem:step}~(iii) applies to \eqref{eq:anchors} with $C=C_x$. It gives $h\in\Ab$, and the limits of $h$ are the sums of the limits of the terms. By \cref{lem:blocks},
\begin{align*}
 h(x_0^{+})&=\sum_{\lambda'_1,\dots,\lambda'_t\ne0}\ \prod_{i=1}^t\frac{e_{N_i}^{C_{x_0}^+}(\lambda'_{i-1},\lambda'_i)}{(\lambda'_i)^{l_i}}=G_\Bbbk^{C_{x_0}^+}(\tau),
 \end{align*}
 and
 \begin{align*}
 h(+\infty)&=\sum_{\lambda'_1,\dots,\lambda'_t\ne0}\ \prod_{i=1}^t\frac{e_{N_i}^{-P}(\lambda'_{i-1},\lambda'_i)}{(\lambda'_i)^{l_i}}=G_\Bbbk^{-P}(\tau).\qedhere
\end{align*}
\end{proof}

\begin{proof}[Proof of \cref{thm:transformation}]
Write $F$ as a linear combination of the $G_\Bbbk$ with real coefficients and admissible indices $\Bbbk$ of weight $k$. Let $H(x)$ be the same linear combination of the functions $G_\Bbbk^{C_x}(\tau)$, $x\in\QQ$. By \cref{prop:jumps} and \cref{lem:step}~(ii), $H$ belongs to $\Ab$. Since the coefficients are real, $F^\flat$ is the same linear combination of the $G_\Bbbk^\flat$. Let $c'\ge1$ and $d'$ be coprime. By \cref{prop:jumps,lem:change},
\[
 H(-d'/c')=(F|_k\gamma_{c',d'})(\tau),\qquad H\big((-d'/c')^{+}\big)=(F^\flat|_k\gamma_{c',d'})(\tau),\qquad H(+\infty)=(-1)^kF(\tau).
\]
Every rational number has a unique representation $-d'/c'$ with coprime $c'\ge1$ and $d'$, and $\gamma$ may serve as $\gamma_{c,d}$. Therefore \eqref{eq:jump-sum} with $x=-d/c$ gives
\[
 (F|_k\gamma)(\tau)=(-1)^kF(\tau)+\sum_{-d'/c'\ge-d/c}\big((F-F^\flat)|_k\gamma_{c',d'}\big)(\tau),
\]
and \eqref{eq:summable} shows the series converges absolutely. The condition $-d'/c'\ge-d/c$ is equivalent to $d'/c'\le d/c$. Finally, if two matrices $\gamma,\gamma'\in\SL_2(\ZZ)$ have the same bottom row, then $\gamma'\gamma^{-1}$ is a power of $T$. Since $F$ and $F^\flat$ have period $1$, the terms do not depend on the choice of $\gamma_{c',d'}$.
\end{proof}

\subsection{Proof of \texorpdfstring{\cref{thm:main}}{Theorem 1.3}}

\begin{proof}[Proof of \cref{thm:main}]
(i) Let $k$ be even. If all $a_n$ are real, then $F^\flat=F$ by \eqref{eq:flat-fourier}, and \eqref{eq:S-formula} gives $F|_kS=F$. Since $F$ is holomorphic on $\HH$, has period $1$ and is bounded on $\{\Ima\tau\ge1\}$ by \cref{thm:fourier}, we obtain $F\in M_k$.

Conversely, let $F\in M_k$. Then $F^\flat\in M_k$. Indeed, $F^\flat$ is holomorphic, has period $1$ and is bounded on $\{\Ima\tau\ge1\}$ by \eqref{eq:flat-fourier}. Moreover, since $k$ is even and $F(-1/z)=z^kF(z)$ for $z\in\HH$,
\[
 F^\flat(-1/\tau)=\overline{F(1/\bar\tau)}=\overline{F\big(-1/(-\bar\tau)\big)}=\overline{(-\bar\tau)^kF(-\bar\tau)}=(-\tau)^k\,\overline{F(-\bar\tau)}=\tau^kF^\flat(\tau).
\]
Hence $(F-F^\flat)|_k\gamma_{c,d}=F-F^\flat$ for all $(c,d)$, and \eqref{eq:S-formula} together with $F|_kS=F$ gives
\[
 0=\sum_{\substack{c\ge1,\ d\le0\\ \gcd(c,d)=1}}\big(F(\tau)-F^\flat(\tau)\big).
\]
This is an absolutely convergent series with infinitely many equal terms, so $F(\tau)=F^\flat(\tau)$ for all $\tau\in\HH$. By \eqref{eq:flat-fourier}, all $a_n$ are real.

(ii) Let $k$ be odd and $a_n\in i\RR$ for all $n$. Then $F^\flat=F$ by \eqref{eq:flat-fourier}, and \eqref{eq:S-formula} gives $F|_kS=-F$. Hence $F|_kS^2=(-F)|_kS=F$. On the other hand $S^2=-I$, so $F|_kS^2=(-1)^kF=-F$. Therefore $F=0$.
\end{proof}

\begin{remark}\label{rem:G22}
The assumption that the indices are admissible cannot be omitted in \cref{thm:complex,thm:main}. By \cite[Example~2.6]{BT},
\[
 G_{2,2}(\tau)=\zeta(2,2)+3\zeta(2)g_2(q)+g_{2,2}(q).
\]
By \eqref{eq:g}, the $q$-series $g_2(q)=-4\pi^2\sum_{n\ge1}\sigma_1(n)q^n$ and $g_{2,2}(q)=16\pi^4(q^3+\cdots)$ have real coefficients, so all Fourier coefficients of $G_{2,2}$ are real. The first two Fourier coefficients of $G_{2,2}$ are $\zeta(2,2)=\pi^4/120$ and $3\zeta(2)\cdot(-4\pi^2)=-2\pi^4$, while those of $G_4$ are $\zeta(4)=\pi^4/90$ and $8\pi^4/3$. Since $M_4=\CC G_4$, this shows $G_{2,2}\notin M_4$. In the proof of \cref{thm:main}, the assumption $k_r\ge3$ enters through \cref{lem:convergence}. For $k_r=2$ the series \eqref{eq:GC} does not converge absolutely. Then the change of the order of summation in the proof of \cref{lem:change} is not justified.
\end{remark}

\subsection{Proof of \texorpdfstring{\cref{thm:complex}}{Theorem 1.1} and \texorpdfstring{\cref{cor:rational}}{Corollary 1.4}}\label{subsec:complex}

Let $\Bbbk_1,\dots,\Bbbk_N$ be all admissible indices of weight $k$. Let $F\in\Ef_k^{\mathrm{adm}}$, and write $F=\sum_{\nu=1}^Nc_\nu G_{\Bbbk_\nu}$ with $c_\nu\in\CC$. The proof of \cref{thm:transformation} uses that $F^\flat$ is the same linear combination of the $G_{\Bbbk_\nu}^\flat$. For complex coefficients this fails, since $F\mapsto F^\flat$ is conjugate linear. The next lemma compares $G_\Bbbk^\flat$ and $G_\Bbbk$ by the weights of the terms in their Fourier expansions. It is used to replace the coefficients $c_\nu$ by real numbers.

\begin{lemma}\label{lem:flat-lower}
Let $\Bbbk$ be an index of weight $k$. Then
\[
 G_\Bbbk^\flat-G_\Bbbk\in\Ef_0+\Ef_1+\cdots+\Ef_{k-1}.
\]
\end{lemma}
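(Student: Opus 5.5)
The plan is to compare the Fourier expansions of $G_\Bbbk$ and $G_\Bbbk^\flat$ term by term using \cref{thm:fourier}. The top-weight piece $g_\Bbbk(q)$ is fixed by $\flat$. Every other piece is either a constant or a multiple of some $g_{\Bbbk'}$ with $\Bbbk'$ of weight less than $k$, and such $g_{\Bbbk'}$ lie in the span of multiple Eisenstein series of lower weight. Throughout, $\flat$ is taken with the weight $k$ of $\Bbbk$, and by \eqref{eq:flat-fourier} it acts on $q$-series by $\sum a_nq^n\mapsto\sum(-1)^k\overline{a_n}q^n$. This is conjugate linear, so it can be applied termwise to the expansion in \cref{thm:fourier}.

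First I prove an auxiliary claim by induction on the weight $w$: for every index $\Bbbk'$ of weight $w$, the function $g_{\Bbbk'}(q)$ lies in $\Ef_0+\Ef_1+\cdots+\Ef_w$. \Cref{thm:fourier} gives
\[
 g_{\Bbbk'}=G_{\Bbbk'}-\zeta(\Bbbk')-\sum_{\Bbbk''}c_{\Bbbk''}g_{\Bbbk''},
\]
with all $\Bbbk''$ of weight less than $w$. Here $G_{\Bbbk'}\in\Ef_w$, $\zeta(\Bbbk')\in\Ef_0=\CC$, and each $g_{\Bbbk''}$ lies in $\Ef_0+\cdots+\Ef_{w-1}$ by the induction hypothesis. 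The base case is covered by the same identity, since when $w$ is minimal the sum over $\Bbbk''$ is empty.

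Next I compute $G_\Bbbk^\flat$ from $G_\Bbbk=\zeta(\Bbbk)+\sum_{\Bbbk'}c_{\Bbbk'}g_{\Bbbk'}+g_\Bbbk$. By \eqref{eq:g}, the coefficients of $g_{\Bbbk'}$ lie in $(-2\pi i)^{w'}\QQ$, where $w'$ is the weight of $\Bbbk'$. Since $\overline{(-2\pi i)^{w'}}=(-1)^{w'}(-2\pi i)^{w'}$, this gives
\[
 G_\Bbbk^\flat=(-1)^k\zeta(\Bbbk)+\sum_{\Bbbk'}(-1)^{k-w'}\overline{c_{\Bbbk'}}\,g_{\Bbbk'}+g_\Bbbk,
\]
where I used that $\zeta(\Bbbk)$ is real. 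The terms $g_\Bbbk$ cancel in $G_\Bbbk^\flat-G_\Bbbk$. What remains is a constant in $\Ef_0$ plus a finite linear combination of $g_{\Bbbk'}$ with $w'<k$, and each of these lies in $\Ef_0+\cdots+\Ef_{k-1}$ by the claim.

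The only point needing care is the sign bookkeeping: the weight $k$ in the definition of $\flat$ differs from the weight $w'$ of the lower terms. That mismatch is exactly why those terms do not cancel, and it is harmless because they already have lower weight. I expect no genuine obstacle here, since the expansion is finite and all identities hold as $q$-series, hence as functions on $\HH$.
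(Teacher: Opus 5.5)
Your proof is correct and follows the paper's argument. You apply $\flat$ termwise to the expansion in \cref{thm:fourier}, observe that $g_\Bbbk^\flat=g_\Bbbk$ so the top-weight term cancels, and show by induction on weight that each $g_{\Bbbk'}$ lies in $\Ef_0+\cdots+\Ef_{k'}$. The only difference is cosmetic: you keep $\overline{c_{\Bbbk'}}$ rather than using, as the paper does, that the $c_{\Bbbk'}$ are real, and your argument does not actually need that fact.
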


\begin{proof}
In this proof $F\mapsto F^\flat$ is formed with the weight $k$ of $G_\Bbbk$, also for the terms of its Fourier expansion. Let $\Bbbk'$ be an index of weight $k'$. By \eqref{eq:g}, the Fourier coefficients of $g_{\Bbbk'}(q)$ are rational multiples of $(-2\pi i)^{k'}$. Since $\overline{(-2\pi i)^{k'}}=(-1)^{k'}(-2\pi i)^{k'}$, \eqref{eq:flat-fourier} gives
\[
 g_{\Bbbk'}^\flat=(-1)^{k+k'}g_{\Bbbk'}.
\]
In particular $g_\Bbbk^\flat=g_\Bbbk$. By \cref{thm:fourier}, the constant term $\zeta(\Bbbk)$ and the coefficients $c_{\Bbbk'}$ are real. Applying $F\mapsto F^\flat$ to the Fourier expansion in \cref{thm:fourier} therefore gives
\[
 G_\Bbbk^\flat-G_\Bbbk=\big((-1)^k-1\big)\zeta(\Bbbk)+\sum_{\Bbbk'}\big((-1)^{k+k'}-1\big)c_{\Bbbk'}\,g_{\Bbbk'}(q),
\]
where $k'$ denotes the weight of $\Bbbk'$. The term $g_\Bbbk(q)$ has cancelled, and every $\Bbbk'$ in the sum has weight less than $k$. It remains to show that
\[
 g_{\Bbbk'}(q)\in\Ef_0+\Ef_1+\cdots+\Ef_{k'}
\]
for every index $\Bbbk'$ of weight $k'$. This follows by induction on $k'$. By \cref{thm:fourier} for $\Bbbk'$,
\[
 g_{\Bbbk'}(q)=G_{\Bbbk'}-\zeta(\Bbbk')-\sum_{\Bbbk''}c_{\Bbbk''}\,g_{\Bbbk''}(q),
\]
where $\Bbbk''$ runs over finitely many indices of weight less than $k'$. For $k'=2$ the sum is empty. Since $G_{\Bbbk'}\in\Ef_{k'}$ and $\zeta(\Bbbk')\in\Ef_0$, the induction hypothesis gives the assertion.
\end{proof}

\begin{proof}[Proof of \cref{thm:complex}]
Let $F\in\Ef_k^{\RR,\mathrm{adm}}$, and write
\[
 F=\sum_{\nu=1}^Nc_\nu G_{\Bbbk_\nu}
\]
with $c_\nu\in\CC$, where $\Bbbk_1,\dots,\Bbbk_N$ are all admissible indices of weight $k$. Since the Fourier coefficients of $F$ are real, \eqref{eq:flat-fourier} gives $F^\flat=(-1)^kF$. Since $F\mapsto F^\flat$ is conjugate linear, \cref{lem:flat-lower} gives
\[
 (-1)^kF-\sum_{\nu=1}^N\overline{c_\nu}\,G_{\Bbbk_\nu}=\sum_{\nu=1}^N\overline{c_\nu}\big(G_{\Bbbk_\nu}^\flat-G_{\Bbbk_\nu}\big)\in\Ef_0+\Ef_1+\cdots+\Ef_{k-1}.
\]
The left side lies in $\Ef_k$. By \cite[Main Theorem~B(i)]{BKa}, the sum $\Ef_0+\Ef_1+\Ef_2+\cdots$ is direct. Hence the left side vanishes, and
\[
 (-1)^kF=\sum_{\nu=1}^N\overline{c_\nu}\,G_{\Bbbk_\nu}.
\]
Let $k$ be even. Adding this expression to the original one gives
\[
 F=\sum_{\nu=1}^N\Rea(c_\nu)\,G_{\Bbbk_\nu}.
\]
So $F$ is a real linear combination of admissible multiple Eisenstein series of weight $k$, and its Fourier coefficients are real. By \cref{thm:main}~(i), $F\in M_k$. Let $k$ be odd. Subtracting this expression from the original one gives $F=iH$ with
\[
 H=\sum_{\nu=1}^N\Ima(c_\nu)\,G_{\Bbbk_\nu}.
\]
The Fourier coefficients of $H=-iF$ lie in $i\RR$, so \cref{thm:main}~(ii) gives $H=0$. Hence $F=0$. This proves $\Ef_k^{\RR,\mathrm{adm}}\subset M_k^\RR$.

For the reverse inclusion, recall that $M_k$ is spanned by the products $G_4^aG_6^b$ with $a,b\ge0$ and $4a+6b=k$. Multiply the absolutely convergent series \eqref{eq:mes} of the factors, and group the terms according to the order of the lattice points and their coincidences. This is the harmonic product. It expresses $G_4^aG_6^b$ as a linear combination with integer coefficients of $G_\Bbbk$ of weight $k$ whose entries are at least $4$. Hence $M_k\subset\Ef_k^{\mathrm{adm}}$, and $M_k^\RR\subset\Ef_k^{\RR,\mathrm{adm}}$. Finally,
\[
 \Ef_k^{\QQ,\mathrm{adm}}=\Ef_k^{\RR,\mathrm{adm}}\cap\QQ[[q]]=M_k^\RR\cap\QQ[[q]]=M_k^\QQ.\qedhere
\]
\end{proof}

\begin{proof}[Proof of \cref{cor:rational}]
By the proof of \cref{thm:complex}, the products $G_4^aG_6^b$ with $4a+6b=k$ are rational linear combinations of $G_1,\dots,G_N$. Choose a basis $w_1,\dots,w_s$ of the $\QQ$-span of these products. Extend it to a basis $w_1,\dots,w_s,v_1,\dots,v_t$ of the $\QQ$-span of $G_1,\dots,G_N$. By \cite[Main Theorem~B(ii)]{BKa}, rational linear combinations of multiple Eisenstein series that are linearly independent over $\QQ$ are also linearly independent over $\CC$. In particular, the functions $w_1,\dots,w_s,v_1,\dots,v_t$ are linearly independent over $\CC$. Moreover, $M_k$ is the $\CC$-span of $w_1,\dots,w_s$.

(i) Write $G_\nu=\sum_i\alpha_{\nu i}w_i+\sum_j\beta_{\nu j}v_j$ with rational $\alpha_{\nu i}$ and $\beta_{\nu j}$. For $c_1,\dots,c_N\in\CC$ we obtain
\[
 \sum_{\nu=1}^Nc_\nu G_\nu=\sum_{i=1}^s\Big(\sum_{\nu=1}^Nc_\nu\alpha_{\nu i}\Big)w_i+\sum_{j=1}^t\Big(\sum_{\nu=1}^Nc_\nu\beta_{\nu j}\Big)v_j.
\]
This function lies in $M_k$ if and only if $\sum_\nu c_\nu\beta_{\nu j}=0$ for $1\le j\le t$. These linear equations have rational coefficients. So their solutions form a vector space with a basis of rational vectors.

(ii) Let $k$ be even. Then $(2\pi i)^k$ is real, so $(2\pi i)^{-k}F$ has real Fourier coefficients if and only if $F$ has. By \cref{thm:main}~(i), this holds if and only if $F$ is a modular form. Let $k$ be odd. Then $(2\pi i)^{-k}F$ has real Fourier coefficients if and only if the Fourier coefficients of $F$ lie in $i\RR$. By \cref{thm:main}~(ii), this holds if and only if $F=0$. Since $M_k=\{0\}$ for odd $k$, this proves the first statement in (ii).

Now let $F$ be a modular form. Write $F=\sum_ia_iw_i+\sum_jb_jv_j$ with $a_i,b_j\in\QQ$. Since $F\in M_k$, we also have $F=\sum_id_iw_i$ with $d_i\in\CC$. Comparing coefficients gives $b_j=0$ for all $j$. Hence $F$ is a rational linear combination of the products $G_4^aG_6^b$. Since $\zeta(4)=\pi^4/90$ and $\zeta(6)=\pi^6/945$, the Fourier expansions of $G_4$ and $G_6$ give
\[
 (2\pi i)^{-4}G_4=\frac{1}{1440}+\frac16\sum_{n\ge1}\sigma_3(n)q^n,\qquad
 (2\pi i)^{-6}G_6=-\frac{1}{60480}+\frac{1}{120}\sum_{n\ge1}\sigma_5(n)q^n.
\]
Hence $(2\pi i)^{-k}G_4^aG_6^b$ has rational Fourier coefficients if $4a+6b=k$, and so does $(2\pi i)^{-k}F$.
\end{proof}

\section{\texorpdfstring{$L$}{L}-functions and the proof of \texorpdfstring{\cref{thm:hecke}}{Theorem 1.8}}\label{sec:hecke}

In this section $F\in\Ef_k$, so the indices of the multiple Eisenstein series in $F$ need not be admissible. Let $F(\tau)=\sum_{n\ge0}a_nq^n$ be its Fourier expansion, see \cref{thm:fourier}, and let $L(F,s)$ and $\Lambda_F(s)$ be as in \cref{sec:intro}.

\begin{proof}[Proof of \cref{prop:analytic}]
By \cref{thm:fourier}, $F-a_0$ is a finite linear combination of $q$-series $g_{\Bbbk'}(q)$ with indices $\Bbbk'$ of weight at most $k$. Fix such an index $\Bbbk'=(k'_1,\dots,k'_r)$ of weight $k'$. For an integer $m\ge2$ and $y>0$ put
\[
 \varphi_m(y)=\frac{1}{(m-1)!}\sum_{v\ge1}v^{m-1}e^{-vy}=\frac{1}{(m-1)!}\Big(-\frac{d}{dy}\Big)^{m-1}\frac{1}{e^y-1}\,.
\]
By \eqref{eq:g},
\begin{equation}\label{eq:g-phi}
 (-2\pi i)^{-k'}g_{\Bbbk'}(e^{-y})=\sum_{0<u_1<\cdots<u_r}\varphi_{k'_1}(u_1y)\cdots\varphi_{k'_r}(u_ry).
\end{equation}
Since $1/(e^y-1)-1/y$ is holomorphic at $y=0$ and $(-d/dy)^{m-1}y^{-1}=(m-1)!\,y^{-m}$, we have
\[
 \varphi_m(y)=y^{-m}+\frac{1}{(m-1)!}\Big(-\frac{d}{dy}\Big)^{m-1}\Big(\frac{1}{e^y-1}-\frac1y\Big)=y^{-m}+O(1)\qquad(0<y\le1).
\]
For $y\ge1$ and $v\ge1$ we have $e^{-vy}\le e^{1-y}e^{-v}$, hence $\varphi_m(y)\le e^{1-y}\varphi_m(1)$. Therefore $y^m\varphi_m(y)$ is bounded on $(0,\infty)$ and tends to $1$ as $y\to0^{+}$. With \eqref{eq:g-phi} this gives, with an implied constant depending on $\Bbbk'$,
\begin{equation}\label{eq:g-bound}
 0\le(-2\pi i)^{-k'}g_{\Bbbk'}(e^{-y})\ll y^{-k'}\sum_{0<u_1<\cdots<u_r}\frac{1}{u_1^{k'_1}\cdots u_r^{k'_r}}=\zeta(\Bbbk')\,y^{-k'}\qquad(y>0).
\end{equation}
Write $(-2\pi i)^{-k'}g_{\Bbbk'}(q)=\sum_{n\ge1}c_nq^n$ with $c_n\ge0$. Then $\sum_nc_ne^{-ny}\le e^{1-y}\sum_nc_ne^{-n}$ for $y\ge1$, and Tonelli's theorem and \eqref{eq:g-bound} give
\[
 \Gamma(\sigma)\sum_{n\ge1}\frac{c_n}{n^\sigma}=\int_0^\infty\sum_{n\ge1}c_ne^{-ny}\,y^{\sigma-1}\,dy<\infty\qquad(\sigma>k').
\]
Since $k'\le k$, the series $L(F,s)$ converges absolutely for $\Rea s>k$. This proves (i).

For (ii) and (iii), write $u_j=d_1+\cdots+d_j$ with $d_1,\dots,d_r\ge1$. Then $u_1v_1+\cdots+u_rv_r=\sum_{j=1}^rd_j(v_j+\cdots+v_r)$, and \eqref{eq:g} gives
\begin{equation}\label{eq:sargos}
 L(g_{\Bbbk'},s)=\frac{(-2\pi i)^{k'}}{(k'_1-1)!\cdots(k'_r-1)!}\sum_{d_1,\dots,d_r,v_1,\dots,v_r\ge1}\frac{v_1^{k'_1-1}\cdots v_r^{k'_r-1}}{\big(\sum_{j=1}^rd_j(v_j+\cdots+v_r)\big)^s}\,.
\end{equation}
The polynomial in the denominator has positive coefficients, and
\[
 \sum_{j=1}^rd_j(v_j+\cdots+v_r)\ge\max(d_1+\cdots+d_r,\ v_1+\cdots+v_r)\qquad\text{on }[1,\infty)^{2r}.
\] In particular it tends to infinity as $|(d_1,\dots,d_r,v_1,\dots,v_r)|\to\infty$ on $[1,\infty)^{2r}$, which is the hypothesis (1.1) of \cite{Sa}. Hence Theorems~1.2 and~1.4 of Sargos \cite{Sa} apply to \eqref{eq:sargos}. They show that $L(g_{\Bbbk'},s)$ has a meromorphic continuation to $\CC$ with real poles, and that it grows at most polynomially in $|\Ima s|$ on every vertical strip for $|\Ima s|\ge1$. The exponential factor in the growth estimate of \cite[Theorem~1.4]{Sa} equals $1$ here, since all coefficients of the polynomial are positive. Together with Stirling's formula for $\Gamma(s)$, this proves (ii) and (iii).
\end{proof}

\begin{lemma}\label{lem:mellin}
\begin{enumerate}
\item For $\Rea s>k$,
\begin{equation}\label{eq:mellin}
 \Lambda_F(s)=\int_0^\infty\big(F(iy)-a_0\big)y^{s-1}\,dy.
\end{equation}
\item For $y>0$ and $\sigma>k$,
\begin{equation}\label{eq:inversion}
 F(iy)-a_0=\frac{1}{2\pi i}\int_{\sigma-i\infty}^{\sigma+i\infty}\Lambda_F(s)y^{-s}\,ds.
\end{equation}
\end{enumerate}
\end{lemma}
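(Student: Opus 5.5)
For (i) I will use the Fourier expansion from \cref{thm:fourier}: $F(iy)-a_0=\sum_{n\ge1}a_ne^{-2\pi ny}$, and I will integrate term by term against $y^{s-1}$. For each $n\ge1$ and $\Rea s>0$, the substitution $t=2\pi ny$ gives $\int_0^\infty e^{-2\pi ny}y^{s-1}\,dy=(2\pi n)^{-s}\Gamma(s)$. Summing over $n$ then gives $(2\pi)^{-s}\Gamma(s)L(F,s)=\Lambda_F(s)$. To justify exchanging sum and integral I will use Fubini. With $\sigma=\Rea s>k$, the double sum-integral of absolute values equals $(2\pi)^{-\sigma}\Gamma(\sigma)\sum_n|a_n|n^{-\sigma}$, and this is finite by \cref{prop:analytic}~(i), since $L(F,s)$ converges absolutely for $\Rea s>k$. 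This proves \eqref{eq:mellin}.

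For (ii) I will apply Mellin inversion to $\phi(y)=F(iy)-a_0$ on the line $\Rea s=\sigma>k$. Integrability of $\Lambda_F(\sigma+it)$ in $t$ follows from \cref{prop:analytic}~(iii). On the strip $\sigma\le\Rea s\le\sigma$ we have $\Lambda_F(\sigma+it)\ll(1+|t|)^\kappa e^{-\pi|t|/2}$ for $|t|\ge1$. The function is continuous, and hence bounded, for $|t|\le1$, because $\sigma>k$ lies in the region of absolute convergence, where there are no poles. So the right side of \eqref{eq:inversion} converges absolutely.

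Rather than citing a general inversion theorem, I will verify \eqref{eq:inversion} directly term by term, which I expect to be cleanest. For $\Rea s=\sigma$, write $\Lambda_F(s)y^{-s}=\sum_n a_n\Gamma(s)(2\pi ny)^{-s}$. The sum of absolute values is $|\Gamma(s)|(2\pi y)^{-\sigma}\sum_n|a_n|n^{-\sigma}$. Since $|\Gamma(\sigma+it)|$ decays like $|t|^{\sigma-1/2}e^{-\pi|t|/2}$ by Stirling, this is integrable in $t$, so the sum and the integral may be exchanged. Each term is then handled by the classical Cahen--Mellin integral $\frac{1}{2\pi i}\int_{(\sigma)}\Gamma(s)x^{-s}\,ds=e^{-x}$ for $x>0$ and $\sigma>0$. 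I can prove that integral by shifting the contour to the left past the poles of $\Gamma$ at $0,-1,-2,\dots$ and summing the residues $(-1)^mx^m/m!$, or simply quote it. Summing over $n$ with $x=2\pi ny$ gives $\sum_n a_ne^{-2\pi ny}=F(iy)-a_0$.

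The only delicate point is the justification of the interchanges, and this is routine because everything takes place in the half-plane of absolute convergence. I do not expect a genuine obstacle here. The substantial analytic inputs are \cref{prop:analytic}~(i), used in both parts, and Stirling's formula.
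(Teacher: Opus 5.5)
Your proposal is correct and follows the paper's proof essentially verbatim. In (i) you integrate termwise, justified by the absolute convergence from \cref{prop:analytic}~(i), and in (ii) you apply the Cahen--Mellin integral termwise, with the interchange justified by $\sum_n|a_n|n^{-\sigma}\int_{-\infty}^{\infty}|\Gamma(\sigma+it)|\,dt<\infty$. The detour through \cref{prop:analytic}~(iii), with its degenerate strip $\sigma\le\Rea s\le\sigma$, is unnecessary, because this termwise bound already shows that the right side of \eqref{eq:inversion} converges absolutely.
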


\begin{proof}
(i) Let $\sigma=\Rea s>k$. By \cref{prop:analytic}~(i), $\sum_n|a_n|(2\pi n)^{-\sigma}\Gamma(\sigma)<\infty$. Hence \eqref{eq:mellin} follows from $\Gamma(s)(2\pi n)^{-s}=\int_0^\infty e^{-2\pi ny}y^{s-1}\,dy$ by termwise integration.

(ii) We use $e^{-y}=\frac{1}{2\pi i}\int_{\sigma-i\infty}^{\sigma+i\infty}\Gamma(s)y^{-s}\,ds$ for $y>0$ and $\sigma>0$. For $\sigma>k$ we have $\sum_n|a_n|n^{-\sigma}\int_{-\infty}^\infty|\Gamma(\sigma+it)|\,dt<\infty$. Hence summation and integration may be interchanged in
\[
 F(iy)-a_0=\sum_{n\ge1}a_n\,\frac{1}{2\pi i}\int_{\sigma-i\infty}^{\sigma+i\infty}\Gamma(s)(2\pi ny)^{-s}\,ds
 =\frac{1}{2\pi i}\int_{\sigma-i\infty}^{\sigma+i\infty}(2\pi)^{-s}\Gamma(s)L(F,s)\,y^{-s}\,ds.
\]
This is \eqref{eq:inversion}.
\end{proof}

\begin{lemma}\label{lem:cusp}
As $y\to0^{+}$, the function $y^kF(iy)$ tends to $(-i)^ka_0$.
\end{lemma}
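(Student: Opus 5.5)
The plan is to reduce the statement to single multiple Eisenstein series and then read off the asymptotics from the Fourier expansion in \cref{thm:fourier}. Since $F\in\Ef_k$, we may write $F=\sum_\nu c_\nu G_{\Bbbk_\nu}$ with indices $\Bbbk_\nu$ of weight exactly $k$. The constant term of $F$ is then $a_0=\sum_\nu c_\nu\zeta(\Bbbk_\nu)$. Both sides of the claim are linear in $F$, so it suffices to show
\[
 y^kG_\Bbbk(iy)\to(-i)^k\zeta(\Bbbk)\qquad(y\to0^{+})
\]
for a single index $\Bbbk$ of weight $k$, admissible or not.

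By \cref{thm:fourier}, $G_\Bbbk(iy)=\zeta(\Bbbk)+\sum_{\Bbbk'}c_{\Bbbk'}g_{\Bbbk'}(e^{-2\pi y})+g_\Bbbk(e^{-2\pi y})$, where every $\Bbbk'$ in the middle sum has weight $k'<k$. The key step is the following asymptotic for an arbitrary index $\Bbbk'=(k'_1,\dots,k'_r)$ of weight $k'$:
\[
 y^{k'}(-2\pi i)^{-k'}g_{\Bbbk'}(e^{-y})\to\zeta(\Bbbk')\qquad(y\to0^{+}).
\]
To prove it, start from \eqref{eq:g-phi} and write the summand as
\[
 y^{k'}\prod_{j}\varphi_{k'_j}(u_jy)=\prod_j\frac{(u_jy)^{k'_j}\varphi_{k'_j}(u_jy)}{u_j^{k'_j}}.
\]
In the proof of \cref{prop:analytic} it was shown that $t^m\varphi_m(t)$ is bounded on $(0,\infty)$ and tends to $1$ as $t\to0^{+}$. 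Hence each numerator is uniformly bounded and tends to $1$ for fixed $u_j$. The summands are therefore dominated by a constant times $\prod_ju_j^{-k'_j}$. This is summable over $0<u_1<\cdots<u_r$, since all $k'_j\ge2$ and so $\zeta(\Bbbk')$ converges. Dominated convergence gives the limit $\zeta(\Bbbk')$.

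Replacing $y$ by $2\pi y$, we get
\[
 g_{\Bbbk'}(e^{-2\pi y})=(-2\pi i)^{k'}\zeta(\Bbbk')(2\pi y)^{-k'}(1+o(1))=(-i)^{k'}\zeta(\Bbbk')\,y^{-k'}(1+o(1)).
\]
Multiplying $G_\Bbbk(iy)$ by $y^k$ then handles each term separately:
\begin{itemize}
\item the constant $\zeta(\Bbbk)$ contributes $O(y^k)\to0$, since $k\ge2$;
\item each term $c_{\Bbbk'}g_{\Bbbk'}$ with $k'<k$ contributes $O(y^{k-k'})\to0$;
\item the top-weight term $g_\Bbbk$, whose coefficient is $1$, contributes $(-i)^k\zeta(\Bbbk)$.
\end{itemize}
Summing over $\nu$ with the coefficients $c_\nu$ gives $y^kF(iy)\to(-i)^ka_0$.

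The only real point is the dominated convergence step, namely the uniform bound on $t^m\varphi_m(t)$ over all $t>0$. This is already established in the proof of \cref{prop:analytic}, so I expect no serious obstacle. One should just note that non-admissible indices, those with last entry $2$, cause no trouble here: $\zeta(\Bbbk')$ still converges because every entry is at least $2$.
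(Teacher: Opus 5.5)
Your proposal is correct and follows essentially the same route as the paper. You reduce by linearity to a single $G_\Bbbk$, discard the constant term and the lower-weight $g_{\Bbbk'}$ after multiplying by $y^k$, and evaluate the top-weight term $g_\Bbbk$ by dominated convergence using the uniform boundedness of $t^m\varphi_m(t)$. The only minor difference is that you get the vanishing of the lower-weight terms from your full asymptotic for $g_{\Bbbk'}$, whereas the paper uses the upper bound \eqref{eq:g-bound} directly.
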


\begin{proof}
By linearity we may assume $F=G_\Bbbk$ with an index $\Bbbk$ of weight $k$, so that $a_0=\zeta(\Bbbk)$. By \cref{thm:fourier} with $q=e^{-2\pi y}$ and by \eqref{eq:g-bound}, $y^kg_{\Bbbk'}(e^{-2\pi y})\to0$ as $y\to0^{+}$ for every index $\Bbbk'$ of weight less than $k$, and $y^k\zeta(\Bbbk)\to0$. By \eqref{eq:g-phi},
\[
 y^kg_\Bbbk(e^{-2\pi y})=(-i)^k\sum_{0<u_1<\cdots<u_r}\ \prod_{j=1}^r\frac{(2\pi u_jy)^{k_j}\varphi_{k_j}(2\pi u_jy)}{u_j^{k_j}}\,.
\]
Each term tends to $(u_1^{k_1}\cdots u_r^{k_r})^{-1}$ and is bounded by a constant multiple of it. By dominated convergence, $y^kg_\Bbbk(e^{-2\pi y})\to(-i)^k\zeta(\Bbbk)=(-i)^ka_0$.
\end{proof}

\begin{proof}[Proof of \cref{thm:hecke}]
Put $f(y)=F(iy)-a_0$ for $y>0$. For $y\ge1$,
\begin{equation}\label{eq:f-decay}
 |f(y)|\le e^{-2\pi(y-1)}\sum_{n\ge1}|a_n|e^{-2\pi n}.
\end{equation}
Since $k$ is even, $i^k=i^{-k}=(-i)^k\in\{\pm1\}$.

Suppose first that $F\in M_k$. Since $F(i/y)=(iy)^kF(iy)$, we have
\[
 f(1/y)=(iy)^k\big(f(y)+a_0\big)-a_0=i^ky^kf(y)+i^ka_0y^k-a_0.
\]
Put $J(s)=\int_1^\infty f(y)y^{s-1}\,dy$. By \eqref{eq:f-decay}, $J$ is entire. Splitting \eqref{eq:mellin} at $y=1$ and substituting $y\mapsto1/y$ on $(0,1)$ gives, for $\Rea s>k$,
\begin{align*}
 \Lambda_F(s)&=J(s)+\int_1^\infty f(1/y)\,y^{-s-1}\,dy
 =J(s)+\int_1^\infty\big(i^ky^kf(y)+i^ka_0y^k-a_0\big)y^{-s-1}\,dy\\
 &=J(s)+i^kJ(k-s)+\frac{i^ka_0}{s-k}-\frac{a_0}{s}\,.
\end{align*}
Under $s\mapsto k-s$ the right side becomes $J(k-s)+i^kJ(s)-i^ka_0/s+a_0/(s-k)$, which is $i^k$ times the right side since $i^{2k}=1$. This proves $\Lambda_F(s)=i^k\Lambda_F(k-s)$.

Conversely, assume that $\Lambda_F(s)=i^k\Lambda_F(k-s)$. The function $\Lambda_F$ is holomorphic for $\Rea s>k$ by \cref{prop:analytic}~(i), hence for $\Rea s<0$ by the functional equation. Since its poles are real and discrete, $\Lambda_F$ has only finitely many poles, all of them in $[0,k]$. By the functional equation, $\rho$ is a pole if and only if $k-\rho$ is a pole.

The poles at $k$ and $0$ are determined by the constant term $a_0$. Put $h(y)=y^kf(y)$ for $0<y\le1$. By \cref{lem:cusp}, $h(y)\to i^ka_0$ as $y\to0^{+}$, so $h$ is bounded. For $0<\varepsilon\le1$, \eqref{eq:mellin} and the substitution $u=y^\varepsilon$, $du=\varepsilon y^{\varepsilon-1}\,dy$, on $(0,1)$ give
\[
 \varepsilon\Lambda_F(k+\varepsilon)=\varepsilon\int_1^\infty f(y)y^{k+\varepsilon-1}\,dy+\int_0^1h(y)\,\varepsilon y^{\varepsilon-1}\,dy
 =\varepsilon\int_1^\infty f(y)y^{k+\varepsilon-1}\,dy+\int_0^1h(u^{1/\varepsilon})\,du.
\]
The first term tends to $0$ as $\varepsilon\to0^{+}$ by \eqref{eq:f-decay}. In the second term, $h(u^{1/\varepsilon})\to i^ka_0$ for every $0<u<1$. Since the integrand is bounded, the integral tends to $i^ka_0$ by dominated convergence. Hence $\varepsilon\Lambda_F(k+\varepsilon)\to i^ka_0$ as $\varepsilon\to0^{+}$. A pole of order at least two at $k$ would make $\varepsilon\Lambda_F(k+\varepsilon)$ unbounded, so $\Lambda_F$ has at most a simple pole at $k$, with residue $i^ka_0$. By the functional equation, it has at most a simple pole at $0$, with residue $-a_0$.

Next we compute $F(iy)-(iy)^{-k}F(i/y)$ by Mellin inversion. Fix $\sigma>k$. By \cref{prop:analytic}~(iii), the integral of $\Lambda_F(s)y^{-s}$ over the line $\Rea s=k-\sigma$ converges absolutely, and the integrals over the horizontal segments from $k-\sigma\pm iV$ to $\sigma\pm iV$ tend to $0$ as $V\to\infty$. Moving the line of integration in \eqref{eq:inversion} to $\Rea s=k-\sigma$ therefore gives
\[
 f(y)=\frac{1}{2\pi i}\int_{k-\sigma-i\infty}^{k-\sigma+i\infty}\Lambda_F(s)y^{-s}\,ds+\sum_\rho\Res_{s=\rho}\big(\Lambda_F(s)y^{-s}\big),
\]
where $\rho$ runs over the poles of $\Lambda_F$. The substitution $s=k-s'$, which reverses the orientation of the line, the functional equation $\Lambda_F(k-s')=i^k\Lambda_F(s')$ and \eqref{eq:inversion} with $1/y$ in place of $y$ give
\begin{align*}
 \frac{1}{2\pi i}\int_{k-\sigma-i\infty}^{k-\sigma+i\infty}\Lambda_F(s)y^{-s}\,ds
 &=\frac{1}{2\pi i}\int_{\sigma-i\infty}^{\sigma+i\infty}\Lambda_F(k-s')\,y^{s'-k}\,ds'\\
 &=i^ky^{-k}\,\frac{1}{2\pi i}\int_{\sigma-i\infty}^{\sigma+i\infty}\Lambda_F(s')(1/y)^{-s'}\,ds'=i^ky^{-k}f(1/y).
\end{align*}
The residues at $k$ and $0$ are
\[
 \Res_{s=k}\big(\Lambda_F(s)y^{-s}\big)=i^ka_0y^{-k},\text{ and }
 \Res_{s=0}\big(\Lambda_F(s)y^{-s}\big)=-a_0.
\]
Since $i^ky^{-k}f(1/y)=(iy)^{-k}F(i/y)-i^ka_0y^{-k}$, the terms with $a_0$ cancel, and we obtain
\begin{equation}\label{eq:defect}
 F(iy)-(iy)^{-k}F(i/y)=\sum_{0<\rho<k}\Res_{s=\rho}\big(\Lambda_F(s)y^{-s}\big)\qquad(y>0).
\end{equation}

Put $\psi(z)=F(z)-z^{-k}F(-1/z)$ for $z\in\HH$, so that the left side of \eqref{eq:defect} is $\psi(iy)$. Let $\Omega=\CC\setminus\{-iy:y\ge0\}$, and for $z\in\Omega$ put $(-iz)^{-s}=\exp(-s\Log(-iz))$ with the principal branch of the logarithm. The right side of \eqref{eq:defect} with $y^{-s}$ replaced by $(-iz)^{-s}$ is a holomorphic function on $\Omega$, which agrees with $\psi$ on the positive imaginary axis. By the identity theorem it agrees with $\psi$ on $\HH$, and we use it to extend $\psi$ to $\Omega$. For $t>0$ we have $\Log(-it)=\log t-\pi i/2$. Let $\rho$ be a pole of order $\nu$, and let $\Lambda_F(s)=\sum_{j=1}^{\nu}b_j(s-\rho)^{-j}+O(1)$ near $\rho$, so that $b_\nu\ne0$. For $t>0$ we have
\[
 (-it)^{-s}=e^{-s(\log t-\pi i/2)}=t^{-\rho}e^{\pi i\rho/2}\sum_{n\ge0}\frac{(\pi i/2-\log t)^n}{n!}\,(s-\rho)^n.
\]
The residue of $\Lambda_F(s)(-it)^{-s}$ at $\rho$ is the coefficient of $(s-\rho)^{-1}$ in the product of the two expansions. This coefficient is the sum over $j$ of $b_j$ times the coefficient of $(s-\rho)^{j-1}$ in the second expansion. Hence
\begin{equation}\label{eq:residue-poly}
 \Res_{s=\rho}\big(\Lambda_F(s)(-it)^{-s}\big)=t^{-\rho}P_\rho(\log t), \text{ and }
 P_\rho(X)=e^{\pi i\rho/2}\sum_{j=1}^{\nu}\frac{b_j}{(j-1)!}\Big(\frac{\pi i}{2}-X\Big)^{j-1}.
\end{equation}
Thus $P_\rho$ is a polynomial of degree $\nu-1$ with nonzero leading coefficient. For $z\in\HH$ we have $\Log(-iz)=\Log z-\pi i/2$, and the same computation gives $\Res_{s=\rho}\big(\Lambda_F(s)(-iz)^{-s}\big)=z^{-\rho}P_\rho(\Log z)$. This is the description of $\psi$ given in \cref{sec:intro}.

Since $F(z+1)=F(z)$, the function $\psi$ satisfies the three-term relation
\begin{equation}\label{eq:three-term}
 \psi(z)=\psi(z+1)+(z+1)^{-k}\psi\Big(\frac{z}{z+1}\Big)\qquad(z\in\HH).
\end{equation}
The right side equals
\[
 F(z+1)-(z+1)^{-k}F\Big(\frac{-1}{z+1}\Big)+(z+1)^{-k}F\Big(\frac{z}{z+1}\Big)-z^{-k}F\Big(-\frac{z+1}{z}\Big).
\]
The two middle terms cancel since $z/(z+1)=1-1/(z+1)$, and the last term equals $-z^{-k}F(-1-1/z)=-z^{-k}F(-1/z)$. This proves \eqref{eq:three-term}. Put $U=\HH\cup\{z\in\CC:\Rea z>0\}$. For $z\in U$ the points $z$, $z+1$ and $z/(z+1)$ lie in $\Omega$, so both sides of \eqref{eq:three-term} are holomorphic on $U$. Since $U$ is connected, \eqref{eq:three-term} holds on $U$, in particular for $z=t\in(0,\infty)$.

Assume now that $\Lambda_F$ has a pole in $(0,k)$, and let $\rho_*$ be the largest one. Since the poles are symmetric with respect to $k/2$, we have $\rho_*\ge k/2\ge2$. Let $\rho\in(0,k)$ be a pole of order $\nu$. As $t\to0^{+}$ we have $(1+t)^{\rho-k}=1+(\rho-k)t+O(t^2)$ and $\log(1+t)=t+O(t^2)$, and the Taylor expansion of the polynomial $P_\rho$ gives $P_\rho(\log t-\log(1+t))=P_\rho(\log t)-tP_\rho'(\log t)+O(t^2|\log t|^{\nu-1})$. Hence
\begin{align*}
 (1+t)^{-k}&\Big(\frac{t}{1+t}\Big)^{-\rho}P_\rho\Big(\log\frac{t}{1+t}\Big)
 =t^{-\rho}(1+t)^{\rho-k}P_\rho\big(\log t-\log(1+t)\big)\\
 &=t^{-\rho}P_\rho(\log t)-t^{1-\rho}\big((k-\rho)P_\rho(\log t)+P_\rho'(\log t)\big)+O\big(t^{2-\rho}|\log t|^{\nu-1}\big).
\end{align*}
Let $m$ be the maximal order of a pole of $\Lambda_F$ in $(0,k)$. Summing over $\rho$ and using \eqref{eq:residue-poly} and \eqref{eq:three-term} with $z=t$, we find
\[
 \psi(1+t)=\psi(t)-(1+t)^{-k}\psi\Big(\frac{t}{1+t}\Big)=\sum_{0<\rho<k}t^{1-\rho}\big((k-\rho)P_\rho(\log t)+P_\rho'(\log t)\big)+O\big(t^{2-\rho_*}|\log t|^{m}\big).
\]
Let $\nu_*$ be the order of $\rho_*$, and let $\theta\ne0$ be the leading coefficient of $P_{\rho_*}$. Since $P_{\rho_*}'$ is $0$ for $\nu_*=1$ and has degree $\nu_*-2$ for $\nu_*\ge2$,
\[
 (k-\rho_*)P_{\rho_*}(\log t)+P_{\rho_*}'(\log t)=(k-\rho_*)\theta(\log t)^{\nu_*-1}+O\big(|\log t|^{\nu_*-2}\big).
\]
Let $\delta>0$ be the minimum of $1$ and of $\rho_*-\rho$ over the poles $\rho<\rho_*$ in $(0,k)$. The terms with $\rho<\rho_*$ and the error term are $O(t^{1-\rho_*+\delta}|\log t|^{m})$. Hence, for $0<t<1/e$,
\[
 \psi(1+t)=t^{1-\rho_*}(\log t)^{\nu_*-1}\big((k-\rho_*)\theta+R(t)\big),\qquad R(t)\ll|\log t|^{-1}+t^{\delta}|\log t|^{m},
\]
and $R(t)\to0$ as $t\to0^{+}$. Choose $t_0>0$ with $|R(t)|\le(k-\rho_*)|\theta|/2$ for $0<t<t_0$. Since $k-\rho_*>0$ and $1-\rho_*<0$,
\[
 |\psi(1+t)|\ge\tfrac12(k-\rho_*)|\theta|\,t^{1-\rho_*}\to\infty\qquad(t\to0^{+}).
\]
This contradicts the holomorphy of $\psi$ at $1\in\Omega$. Therefore $\Lambda_F$ has no poles in $(0,k)$. Then \eqref{eq:defect} gives $\psi(iy)=0$ for $y>0$, and $\psi=0$ on $\HH$ by the identity theorem. Hence $F|_kS=F$. Together with \cref{thm:fourier}, this shows $F\in M_k$.
\end{proof}

\begin{remark}\label{rem:G2}
The condition $k\ge4$ in \cref{thm:hecke} cannot be omitted. The Fourier expansion $G_2(\tau)=\zeta(2)-4\pi^2\sum_{n\ge1}\sigma_1(n)q^n$ and $\sum_{n\ge1}\sigma_1(n)n^{-s}=\zeta(s)\zeta(s-1)$ give
\[
 \Lambda_{G_2}(s)=-4\pi^2(2\pi)^{-s}\Gamma(s)\zeta(s)\zeta(s-1).
\]
Put $\xi(s)=\pi^{-s/2}\Gamma(s/2)\zeta(s)$. The duplication formula $\Gamma(s/2)\Gamma((s-1)/2)=2^{2-s}\sqrt\pi\,\Gamma(s-1)$ and $\Gamma(s)=(s-1)\Gamma(s-1)$ give
\[
 \xi(s)\xi(s-1)=\pi^{\frac12-s}\,2^{2-s}\sqrt\pi\,\Gamma(s-1)\zeta(s)\zeta(s-1)=\frac{4\pi(2\pi)^{-s}}{s-1}\,\Gamma(s)\zeta(s)\zeta(s-1),
\]
and therefore
\[
 \Lambda_{G_2}(s)=-\pi(s-1)\,\xi(s)\,\xi(s-1).
\]
Since $\xi(s)=\xi(1-s)$, we have $\xi(2-s)\xi(1-s)=\xi(s-1)\xi(s)$, so that $\Lambda_{G_2}(2-s)=-\pi(1-s)\xi(s)\xi(s-1)=-\Lambda_{G_2}(s)$. This is the functional equation for $k=2$. However $G_2$ is not a modular form, since $M_2=\{0\}$. In this case $\Lambda_{G_2}$ has a pole at $\rho_*=1=k/2$. Then $t^{1-\rho_*}=1$, and $\psi(1+t)$ stays bounded as $t\to0^{+}$. So the argument above gives no contradiction. Indeed $\psi(z)=\pi i/z$ for $F=G_2$, which is holomorphic at $z=1$.
\end{remark}

{\bf AI \& computational resource disclosure:} The results, their formulation, and the proof strategy are the author's own. ChatGPT~6 Astra and Claude Opus~5.5 and Fable~5.1 were used throughout as research assistants: they computed and checked explicit examples of modular forms via SageMath. These examples suggested the criterion in \cref{thm:main}. The same tools helped to simplify the proofs in \cref{subsec:complex} and to improve the English. All statements and proofs were verified by the author, who takes sole responsibility for them.

\end{document}